\documentclass{article}

\usepackage[a4paper]{geometry}

\usepackage[dvips]{hyperref}


\usepackage{pb-diagram}

\usepackage{curves}

\usepackage{amsmath}
\usepackage{amsthm}
\usepackage{amssymb}

\usepackage{bm}

\usepackage{latexsym}

  
  %
   {\begin{proof}[Sketch of Proof of #1]}%
   {\end{proof}}


\theoremstyle{plain}
  \newtheorem{theorem}{Theorem}[section]
  \newtheorem{corollary}[theorem]{Corollary}
  
  \newtheorem{lemma}[theorem]{Lemma}
  
  \newtheorem{proposition}[theorem]{Proposition}
  \newtheorem{conjecture}[theorem]{Conjecture}

\theoremstyle{definition}
  \newtheorem{definition}[theorem]{Definition}

  \newtheorem{ex}[theorem]{Example}

  \newtheorem{remark}[theorem]{Remark}

















  

  
  \newcommand{\Map}{\operatorname{Map}}


  \newcommand{\Ker}{\operatorname{Ker}}

  \newcommand{\Ima}{\operatorname{Im}}

%
%


  \newcommand{\shot}{\mathop\simeq\limits_S}
  \newcommand{\whot}{\mathop\simeq\limits_w}

  \newcommand{\quotient}[2]{%
  \left(#1\right)
  \hspace{-4pt}\raisebox{-5pt}{$\bigg/$}\hspace{-2pt}\raisebox{-12pt}{$#2$}%
  }
\newcommand{\rset}[2]{%
\left\{#1 \ \left| \ #2\right\}\right.
}
\newcommand{\lset}[2]{%
\left.\left\{#1 \ \right| \ #2\right\}
}

  \newcommand{\rarrow}[1]{\buildrel #1 \over \longrightarrow}



  
  \newcommand{\transpose}[1]{\raisebox{1ex}{$\scriptstyle t$}\kern-0.2ex #1}

  \newcommand{\Conv}{\operatorname{\mathrm{Conv}}} 






  \newcommand{\R}{\mathbb{R}}
  \newcommand{\C}{\mathbb{C}}

  \newcommand{\F}{\mathbb{F}}

  \newcommand{\Z}{\mathbb{Z}}


  


  \newcommand{\Sal}{\operatorname{Sal}} 


  





  \newcommand{\sgn}{\operatorname{\mathrm{sgn}}}
  \newcommand{\sign}{\operatorname{\mathrm{sign}}}

  \newcommand{\codim}{\operatorname{\mathrm{codim}}}


  












  \newcommand{\String}{\mathit{\String}}







\newcommand{\sk}{\operatorname{sk}}

\newcommand{\Sd}{\operatorname{\mathrm{Sd}}}

\newcommand{\fixhyperref}{%
\ifnum 42146=\euc"A4A2 \AtBeginDvi{}\else
\AtBeginDvi{}\fi}

\newcommand{\bibdir}{bib}

\title{\bfseries On the Homology of Configuration Spaces Associated to
Centers of Mass}
\author{Dai Tamaki
\thanks{Department of Mathematical Sciences, Shinshu University,
Matsumoto, 390-8621, Japan}
}
\date{}

\begin{document}
\maketitle

\begin{abstract}
 The aim of this paper is to make sample computations with the Salvetti
 complex of the ``center of mass'' arrangement introduced in
 \cite{math.AT/0611732} by Cohen and Kamiyama. We compute the homology
 of the Salvetti complex of these arrangements with coefficients in the
 sign representation of symmetric groups on $\F_p$ in the case of four
 particles. We show, when $p$ is an odd prime, the homology is
 isomorphic to the homology of the configuration space $F(\C,4)$ of
 distinct four points in $\C$  with the same coefficients. When $p=2$,
 we show the homology is different from that of $F(\C,4)$, hence obtain
 an alternative and more direct proof of a theorem of Cohen and Kamiyama
 in \cite{math.AT/0611732}.
\end{abstract}

\section{Introduction}
\label{introduction}

The configuration spaces of distinct points in $\C$
\[
F(\C,n)  = \lset{(z_1,\cdots,z_n)\in\C^n}{z_i\neq z_j \text{ if } i\neq j}
\]
have been playing important roles in homotopy theory. For example,
F.~Cohen \cite{F.Cohen83} used the weak equivalence
\[
 \Omega^2\Sigma^2 X \whot \quotient{\coprod_n
 F(\C,n)\times_{\Sigma_n}X^n}{\sim} 
\]
to construct an unstable splitting map
\begin{equation}
 \Sigma^{2n}\Omega^2\Sigma^2 X \longrightarrow
 \Sigma^{2n}F(\C,n)_+\wedge_{\Sigma_n} X^{\wedge n}.
 \label{Cohen_splitting}
\end{equation}
This is a desuspension of the well-known stable splitting 
\[
 \Sigma^{\infty}\Omega^2\Sigma^2 X \whot
 \Sigma^{\infty}\left(\bigvee_{n} F(\C,n)_+\wedge_{\Sigma_n}X^{\wedge
 n}\right) 
\]
due to Snaith \cite{Snaith74}.

These stable and unstable splitting maps can be used to construct
important maps in unstable homotopy theory. See \cite{Mahowald77,
F.Cohen83}, for example. It is also known that the map (\ref{Cohen_splitting})
cannot be desuspended further \cite{Cohen-Mahowald82}. There is a chance
of desuspending this   
map, however, if we localize at an appropriate prime. B.~Gray observed
in \cite{Gray93-1,Gray93-2} that if we could construct a map 
\begin{equation}
 \Sigma^2\Omega^2 S^{3} \longrightarrow \Sigma^2
 F(\C,p)_+\wedge_{\Sigma_p} S^p 
 \label{Gray_splitting}
\end{equation}
after localizing at an odd prime $p$, we would be able to refine results
of Cohen, Moore, and Neisendorfer
\cite{Cohen-Moore-Neisendorfer79-1,Cohen-Moore-Neisendorfer79-2} and
construct higher order EHP sequences. 

The difficulty is to construct a localized model of $\Omega^2S^3$ in
terms of configuration spaces. We do not know very much about
localizations of configuration spaces.
As an attempt to construct such a localized model, F.~Cohen and Kamiyama
introduced a subspace $M_{\ell}(\C,n)$ of $F(\C,n)$ in
\cite{math.AT/0611732}, for natural numbers $n$ and $\ell$ with
$\ell<n$. It can be defined as the 
complement in $\C^n$ of the complexification of the real central
hyperplane arrangement defined by
\[
\mathcal{C}_{n-1}^{\ell} = \{L_{I,J} \mid I,J\subset\{1,\cdots,n\},
|I|=|J|=\ell, I\neq J\},
\]
where 
\[
 L_{I,J} = \rset{(x_1,\cdots,x_n)\in\R^n}{ |J|\sum_{i\in I}x_i =
 |I|\sum_{j\in J}x_j }
\]
for $I,J \subset \{1,\cdots,n\}$.

Notice that
\[
 \mathcal{C}_{n-1}^{\ell} = 
 \begin{cases}
  \lset{L_{I,J}}{I,J\subset \{1,\cdots,n\},
 |I|=|J|\le \ell , I\cap J = \emptyset} & \ell\le\frac{n}{2}, \\
  \lset{L_{I,J}}{I,J\subset \{1,\cdots,n\},
 |I|=|J|\le n-\ell , I\cap J = \emptyset} & \ell>\frac{n}{2}
 \end{cases}
\]
and we have the duality
\[
  \mathcal{C}_{n-1}^{\ell} = \mathcal{C}_{n-1}^{n-\ell}.
\]
Thus we have the following inclusions of arrangements:
\[
 \mathcal{A}_{n-1} = \mathcal{C}_{n-1}^1 \subset \mathcal{C}_{n-1}^2
 \subset \cdots \subset \mathcal{C}_{n-1}^{[\frac{n}{2}]} \supset \cdots
 \supset \mathcal{C}_{n-1}^{n-2} \supset \mathcal{C}_{n-1}^{n-1} =
 \mathcal{A}_{n-1}. 
\]
By taking the complements, we obtain
\begin{equation}
 M_{\ell}(\C,n)\subset M_{\ell-1}(\C,n) \subset \cdots\subset M_2(\C,n)
 \subset M_1(\C,n) = F(\C,n) 
 \label{inclusion1}
\end{equation}
if $\ell\le [\frac{n}{2}]$. When $\ell> [\frac{n}{2}]$, we have
\begin{equation}
 M_{\ell}(\C,n) = M_{n-\ell}(\C,n) \subset M_{n-\ell-1}(\C,n) \subset
  \cdots\subset M_{2}(\C,n) 
 \subset M_{1}(\C,n) = F(\C,n). 
 \label{inclusion2}
\end{equation}
We denote these inclusions by
$i_{\ell,n} : M_{\ell}(\C,n) \hookrightarrow F(\C,n)$.

\begin{conjecture}[Cohen-Kamiyama]
 For an odd prime $p$, the natural inclusion
 \[
  i_{p,n} : M_p(\C,n) \hookrightarrow F(\C,n)
 \]
 induces an isomorphism
 \[
  (i_{p,n})_* : H_*(S_*(M_p(\C,n))\otimes_{\Sigma_n}\F_p(\pm 1))
 \longrightarrow H_*(S_*(F(\C,n))\otimes_{\Sigma_n}\F_p(\pm 1))
 \]
 for all $n$, where $S_*(-)$ is the singular chain complex functor,
 $\Sigma_n$ is the symmetric group on $n$ letters, and $\F_p(\pm 1)$
 is $\F_p$ regarded as a $\Sigma_n$-module via sign representation.
\end{conjecture}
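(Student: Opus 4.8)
The plan is to replace singular chains by the cellular chains of Salvetti complexes and to reduce the statement to a sequence of local, combinatorial vanishing results. Since each hyperplane $L_{I,J}$ is permuted by the coordinate action of $\Sigma_n$ on $\R^n$, the arrangement $\mathcal{C}_{n-1}^\ell$ is $\Sigma_n$-invariant, and the Salvetti complex $\Sal(\mathcal{C}_{n-1}^\ell)$ is a $\Sigma_n$-equivariant regular CW model onto which the complement $M_\ell(\C,n)$ deformation retracts equivariantly. Because the points of a configuration are distinct, $\Sigma_n$ acts freely, so $H_*(S_*(M_\ell(\C,n))\otimes_{\Sigma_n}\F_p(\pm 1))$ is computed by the finite complex $C_*(\Sal(\mathcal{C}_{n-1}^\ell))\otimes_{\Sigma_n}\F_p(\pm 1)$, whose terms are indexed by $\Sigma_n$-orbits of pairs $[F\le C]$ of a face $F$ and a chamber $C$ in its star (with dimension $\codim F$) and whose differentials are Salvetti's sign rules twisted by the sign characters of the face stabilizers. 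The inclusion $i_{p,n}$ is realized by the map of such complexes induced by $\mathcal{A}_{n-1}\subseteq\mathcal{C}_{n-1}^p$, so the conjecture becomes the assertion that this chain map is a quasi-isomorphism.

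First I would set up a telescope. By (\ref{inclusion1}) the map $i_{p,n}$ factors as $M_p\subset M_{p-1}\subset\cdots\subset M_1=F(\C,n)$ when $p\le\frac{n}{2}$, and the duality $\mathcal{C}_{n-1}^\ell=\mathcal{C}_{n-1}^{n-\ell}$ together with (\ref{inclusion2}) reduces the case $p>\frac{n}{2}$ to an analogous telescope of length $n-p<p$. It therefore suffices to prove that each one-step inclusion $M_\ell(\C,n)\hookrightarrow M_{\ell-1}(\C,n)$, for $2\le\ell\le p$, induces an isomorphism on sign-twisted $\F_p$-homology. Passing from $\mathcal{C}_{n-1}^{\ell-1}$ to $\mathcal{C}_{n-1}^\ell$ adjoins exactly the hyperplanes $L_{I,J}$ with $|I|=|J|=\ell$ and $I\cap J=\emptyset$, so the difference between the two Salvetti complexes is supported on the strata meeting these new hyperplanes. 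I would encode this by the long exact sequence (equivalently, a spectral sequence) of the pair, whose relative term is a direct sum, over $\Sigma_n$-orbits of new strata, of the sign-twisted local contributions of their links.

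The heart of the argument is the vanishing of these relative terms. Each new hyperplane $L_{I,J}$ has stabilizer containing $(\Sigma_I\times\Sigma_J)\rtimes\Sigma_2\cong\Sigma_\ell\wr\Sigma_2$, where the generator of $\Sigma_2$ exchanges the blocks $I$ and $J$; as a permutation of $\{1,\dots,n\}$ this swap is a product of $\ell$ transpositions, hence acts on $\F_p(\pm 1)$ by $(-1)^\ell$. For $\ell<p$ the order of the local stabilizer is prime to $p$, and a transfer (averaging) argument shows the sign-twisted link homology vanishes, so these steps are formal. The decisive case is the top step $\ell=p$, where $p\mid|\Sigma_\ell\wr\Sigma_2|$ and transfer is unavailable: here the block-swap acts by $(-1)^p=-1$, and one must show that this genuine sign --- present precisely because $p$ is odd --- forces the local contribution to cancel in $\F_p$-homology. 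Establishing this last vanishing uniformly in $n$ is the main obstacle; it is exactly the mechanism that breaks at $p=2$, where the swap acts trivially (indeed $-1=1$) and the relative term survives, in agreement with the Cohen--Kamiyama theorem and with the four-particle computations carried out in this paper, which settle the case $\ell=2$.
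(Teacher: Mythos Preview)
The statement you are addressing is labeled a \emph{Conjecture} in the paper, and the paper does not prove it. The paper's contribution is much more modest: it carries out explicit cellular computations with the Salvetti complex in the single case $n=4$, $\ell=2$, establishing Theorem~\ref{p>=3} (the one-step inclusion $M_2(\C,4)\hookrightarrow F(\C,4)$ is an $\F_p(\pm1)$-homology isomorphism for odd $p$) and reproving Theorem~\ref{fails_at_2}. There is therefore no ``paper's own proof'' to compare your proposal against.

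Your proposal is a reasonable strategic outline---replacing singular chains by Salvetti cellular chains, telescoping along the filtration $M_p\subset\cdots\subset M_1$, and analyzing relative terms---and indeed the paper adopts the first two of these moves. But your write-up is not a proof, and you say so yourself: you identify the vanishing of the local contribution at the step $\ell=p$ as ``the main obstacle'' and leave it unresolved. A few further gaps in the sketch are worth flagging. First, the stabilizer of $L_{I,J}$ in $\Sigma_n$ is $(\Sigma_\ell\wr\Sigma_2)\times\Sigma_{n-2\ell}$, not just $\Sigma_\ell\wr\Sigma_2$; the factor $(n-2\ell)!$ can be divisible by $p$ even when $\ell<p$, so the ``prime-to-$p$ transfer'' argument for the intermediate steps does not go through as stated. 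Second, the relative complex between $\Sal(\mathcal{C}_{n-1}^{\ell})$ and $\Sal(\mathcal{C}_{n-1}^{\ell-1})$ is not indexed merely by the new hyperplanes: adding hyperplanes subdivides many existing faces of all codimensions, and the kernel $K_*^{n,\ell}$ (in the paper's notation) is generated by differences and new cells arising throughout the face lattice, as the explicit generators in Lemma~\ref{generators_of_K42} already illustrate for $n=4$. Third, the sign heuristic for the block swap, while suggestive, does not by itself force the relevant chain complex to be acyclic. In short: your framework matches the paper's, but the conjecture remains open and your proposal does not close it.
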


They proved that the above conjecture implies the existence of the
desired map (\ref{Gray_splitting}). They also initiated the
analysis of the homology of $M_p(\C,n)$ and proved the statement of
the conjecture does not hold when $p=2$.

\begin{theorem}
 \label{fails_at_2}
 The class in $H_3(S_*(F(\C,4))\otimes_{\Sigma_4}\F_2)$ corresponding to
 $Q_1^2(x)$ in $H_*(\Omega^2S^{n+2};\F_2)$ is not in the image of the
 map 
 \[
  H_3(S_*(M_2(\C,4))\otimes_{\Sigma_4}\F_2) \longrightarrow
 H_3(S_*(F(\C,4))\otimes_{\Sigma_4}\F_2) 
 \]
 induced by the natural inclusion. Hence this map is not surjective.
\end{theorem}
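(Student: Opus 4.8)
The plan is to reduce the claim to a parity computation with the two Salvetti complexes, after first pinning down the target. Over $\F_2$ the sign representation is trivial, so $\F_2(\pm 1)\cong\F_2$ as a $\Sigma_4$-module and $H_*(S_*(F(\C,4))\otimes_{\Sigma_4}\F_2)$ computes $H_*(F(\C,4)/\Sigma_4;\F_2)$. Now $F(\C,4)$ is the complement of the braid arrangement $\mathcal{A}_3=\mathcal{C}_3^1$, which has rank $3$, so its $\Sigma_4$-equivariant Salvetti complex is concentrated in degrees $0$ through $3$, with top cells in bijection with the $24$ chambers. Since $\Sigma_4$ acts simply transitively on these chambers, the top chain group $C_3\bigl(\Sal(\mathcal{A}_3)\bigr)\otimes_{\Sigma_4}\F_2$ is one-dimensional. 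The stable splitting of $\Omega^2\Sigma^2 S^n$ identifies the weight-four summand of $H_*(\Omega^2 S^{n+2};\F_2)$ with $\tilde H_*(F(\C,4)_+\wedge_{\Sigma_4}(S^n)^{\wedge 4};\F_2)\cong H_{*-4n}(F(\C,4)/\Sigma_4;\F_2)$ via the mod $2$ Thom isomorphism, and under this identification $Q_1^2(x)$ is a nonzero class in degree $3$. Hence $H_3(S_*(F(\C,4))\otimes_{\Sigma_4}\F_2)\cong\F_2$ with generator $Q_1^2(x)$, and the theorem is equivalent to the assertion that the degree-$3$ map is zero.

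Next I would describe the comparison map. The space $M_2(\C,4)$ is the complement of $\mathcal{C}_3^2$, which is obtained from $\mathcal{A}_3$ by adjoining the three hyperplanes $x_i+x_j=x_k+x_l$ indexed by the partitions of $\{1,2,3,4\}$ into two pairs; this again has rank $3$, and its chambers refine those of $\mathcal{A}_3$. The inclusion of arrangements induces a $\Sigma_4$-equivariant cellular map $\Sal(\mathcal{C}_3^2)\to\Sal(\mathcal{A}_3)$ modelling $i_{2,4}$, whose effect on top cells sends each chamber of $\mathcal{C}_3^2$ to the chamber of $\mathcal{A}_3$ containing it. Since $\Sigma_4$ still acts freely on the finer chambers, passing to $\otimes_{\Sigma_4}\F_2$ yields a finite chain map over $\F_2$ whose degree-$3$ component, read modulo $\Sigma_4$, records the parity of the number of $\mathcal{C}_3^2$-chambers lying in a fixed braid chamber.

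Because degree $3$ is top, $H_3$ is in both cases the kernel of the top boundary operator, so the whole problem becomes linear algebra over $\F_2$. The decisive step is to compute $\ker\partial_3$ for $\Sal(\mathcal{C}_3^2)\otimes_{\Sigma_4}\F_2$ and to verify that every $3$-cycle meets each braid chamber in an even number of top cells, so that its image in $H_3(S_*(F(\C,4))\otimes_{\Sigma_4}\F_2)\cong\F_2$ vanishes. I expect this verification to be the main obstacle: one must write out the boundary data for the nine hyperplanes of $\mathcal{C}_3^2$ together with their $\Sigma_4$-symmetry, determine the space of top cycles, and check the parity condition. The combinatorics is elementary but voluminous, and it is exactly the data assembled in the earlier sections; granting those matrices, the degree-$3$ map fails to be onto, the class $Q_1^2(x)$ is not in its image, and Theorem~\ref{fails_at_2} follows.
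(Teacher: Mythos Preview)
Your proposal is correct and follows the same route as the paper: replace both complements by their Salvetti complexes, pass to $\F_2$ and the $\Sigma_4$-quotient, observe that degree~$3$ is top so $H_3=\ker\partial_3$, and then check that every top cycle for $\mathcal{C}_3^2$ maps to zero in the single top cell for $\mathcal{A}_3$. The paper carries out exactly the computation you defer: each braid chamber is cut by one hyperplane $L_{\{1,4\},\{2,3\}}$ into precisely two $\mathcal{C}_3^2$-chambers, so $C_3^{4,2}$ is two-dimensional over $\F_2$; from the explicit boundary formulas (Lemma~\ref{boundary_of_3-cells}) one reads off that $\ker\partial_3$ is spanned by the sum of the two cells, which maps to $0$ under $i_4^2$, giving your parity conclusion.
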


Their method is indirect in the sense that they proved it by contraction
by calculating homology and cohomology operations. 
In order to find a way to attack the conjecture, a more direct method is
desirable.  

For a real central hyperplane arrangement $\mathcal{A}$ in general,
Salvetti \cite{Salvetti87} constructed a finite cell complex
$\Sal(\mathcal{A})$ embedded in the complement of the complexification
of $\mathcal{A}$ as a deformation retract. The aim of this paper is to
determine the maps induced on homology groups in the last step of the
inclusions (\ref{inclusion1}) and (\ref{inclusion2}) by using the
Salvetti complex. 

We reprove Theorem \ref{fails_at_2} by
analyzing the cellular structure of the Salvetti complex.
In fact, we show that
$H_3(S_*(M_2(\C,4))\otimes_{\Sigma_4}\F_2) \cong \F_2$ and the map takes
the generator to $0$.

For odd primes, we obtain the following result.

\begin{theorem}
 \label{p>=3}
 For an odd prime $p$, the inclusion $M_2(\C,4) \hookrightarrow F(\C,4)$
 induces an isomorphism
 \[
  H_*(S_*(M_2(\C,4))\otimes_{\Sigma_4}\F_p(\pm 1)) \cong
 H_*(S_*(F(\C,4))\otimes_{\Sigma_4}\F_p(\pm 1)). 
 \]
\end{theorem}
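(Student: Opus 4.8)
The plan is to replace both spaces by their Salvetti complexes and to reduce the statement to a finite, $\Sigma_4$-equivariant chain-level computation. The complexified complement of $\mathcal{A}_3$ in $\C^4$ is exactly $F(\C,4)$, and that of $\mathcal{C}_3^2$ is $M_2(\C,4)$, so $\Sal(\mathcal{A}_3)\hookrightarrow F(\C,4)$ and $\Sal(\mathcal{C}_3^2)\hookrightarrow M_2(\C,4)$ are $\Sigma_4$-equivariant deformation retracts. The inclusion of arrangements $\mathcal{A}_3\subset\mathcal{C}_3^2$ induces, by functoriality of the Salvetti construction for sub-arrangements, a cellular map $\Sal(\mathcal{C}_3^2)\to\Sal(\mathcal{A}_3)$ modelling $i_{2,4}$. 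Because the $\Sigma_4$-action on $F(\C,4)$ is free, so is the action on $M_2(\C,4)$ and on the cells of both Salvetti complexes; hence the cellular chain groups are finitely generated free $\Z[\Sigma_4]$-modules, and $C_*^{\mathrm{cell}}(-)\otimes_{\Sigma_4}\F_p(\pm1)$ is a finite complex of $\F_p$-vector spaces whose homology is precisely the group in the theorem. It therefore suffices to compute the map induced by the above cellular map after applying this functor.

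To organize the combinatorics I would first essentialize $\mathcal{C}_3^2$ by quotienting out the diagonal $\R(1,1,1,1)$: on $V=\{\sum x_i=0\}$ the six walls $x_i=x_j$ together with the three walls $x_i+x_j=x_k+x_l$ become the reflection arrangement of type $B_3$, under the classical isomorphism $W(A_3)\cong W(D_3)$. In these terms $\Sigma_4$ is the index-two subgroup $W(D_3)\subset W(B_3)$, which permutes the three coordinate axes according to the three pair-partitions, and the character $\F_p(\pm1)$ is the determinant character $\det$ of $W(D_3)$ on $V$ (transpositions are $D_3$-reflections, so map to $-1$). The cells of $\Sal(\mathcal{C}_3^2)$ are the pairs $[C\mid F]$ of a chamber $C$ and a face $F\subseteq\overline{C}$, of dimension $\codim F$; the $48$ chambers of the $B_3$ fan split into two free $\Sigma_4$-orbits, and I would list orbit representatives of cells in each dimension $0\le k\le 3$ and record the Salvetti boundary on them.

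Applying $\otimes_{\Sigma_4}\F_p(\pm1)$ then replaces each incidence appearing in the Salvetti boundary by its sign-twisted sum $\sum_{g\in\Sigma_4}[\partial e:g e']\sgn(g)$, producing two small complexes over $\F_p$. For $\Sal(\mathcal{A}_3)$ this reproduces the known value of $H_*(B_4;\F_p(\pm1))$, and for $\Sal(\mathcal{C}_3^2)$ I would compute the homology directly from the $B_3$ cell data. The comparison map is the explicit collapse sending $[C\mid F]$ to the coarser pair in $\Sal(\mathcal{A}_3)$. I would show it is an isomorphism degreewise by checking surjectivity (the braid cells occur among the $B_3$ cells) together with equality of dimensions, the extra cells supported on the three center-of-mass walls contributing nothing to the $\det$-twisted homology at odd $p$. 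Here odd $p$ enters decisively: the relevant sign-twisted incidences are units modulo an odd prime, so the would-be extra class in degree $3$ is a boundary, whereas at $p=2$ the twist is trivial and that class survives — exactly the phenomenon of Theorem \ref{fails_at_2}.

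The main obstacle is the sign bookkeeping: consistently orienting all cells of the roughly $48$-chamber complex and evaluating the $\Sigma_4$-equivariant Salvetti boundary so that every sign-twisted incidence number is correct, and then upgrading "the two homologies have equal dimension in each degree" to "the comparison map is an isomorphism." The conceptual crux, which I expect to be the delicate point, is to see cleanly why the determinant twist annihilates the center-of-mass contribution at odd $p$ while permitting it to survive at $p=2$.
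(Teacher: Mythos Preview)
Your plan is essentially the paper's approach: replace both spaces by their Salvetti complexes, pass to the $\Sigma_4$-twisted cellular chain complexes, and analyze the induced chain map. Your identification of the essentialized $\mathcal{C}_3^2$ with the $B_3$ reflection arrangement (with $\Sigma_4\cong W(D_3)\subset W(B_3)$) is correct and conceptually illuminating; the paper does not make this identification explicitly, though it uses the same nine normal vectors. Where the paper is sharper is in the end-game: rather than computing both twisted homologies and then arguing that the map is an isomorphism, it uses the chain-level surjectivity (your ``braid cells occur among the $B_3$ cells'') to form the short exact sequence $0\to K_*^{4,2}\to C_*^{4,2}\to C_*^{4,1}\to 0$ and shows directly that $H_*(K_*^{4,2})=0$ for odd $p$. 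This is exactly your ``the extra cells \dots\ contributing nothing,'' and it makes the ``surjectivity plus equal dimensions'' step unnecessary; note that surjectivity on chains together with equal homology dimensions does \emph{not} by itself force the homology map to be an isomorphism, so the acyclicity of $K_*$ is really the point.

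The computation you have not carried out is small but decisive. The kernel $K_*^{4,2}$ has ranks $1,3,3,1$ in degrees $0,1,2,3$, with generators corresponding to the ``extra'' center-of-mass cells (differences of $\pm$ pairs, together with the genuinely new cells lying on $L_{\{1,4\},\{2,3\}}$). The boundary of the single degree-$3$ generator $x_3$ is $-4x_{21}+4(x_{22}^+-x_{22}^-)$; the $4$'s are units at odd $p$, forcing $H_3(K_*)=0$, and the remaining degrees vanish by an equally short linear-algebra check. At $p=2$ these coefficients collapse and $x_3$ becomes a nontrivial cycle in $K_*$ mapping to zero in $C_*^{4,1}$, which is precisely Theorem~\ref{fails_at_2}. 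So your intuition about where odd $p$ enters is right, but the phrasing ``the would-be extra class in degree $3$ is a boundary'' is slightly off: at odd $p$ the class $x_3$ is not a cycle in $K_*$ (its boundary is nonzero), rather than being a cycle that bounds.
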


The paper is organized as follows:
\begin{itemize}
 \item Basic properties of the Salvetti complex used in this paper are
       recalled in \S\ref{Salvetti_complex}. 
 \item We describe the cell decomposition of the Salvetti complex for the
       braid arrangement in \S\ref{braid_arrangement} by using the
       notations in \cite{math.AT/0602085}.
 \item The Salvetti complex for the center of mass arrangement is
       studied in \S\ref{center}, including the computation of the
       homology of $\mathcal{C}_3^2$.  
\end{itemize}

\bigskip

\noindent\textbf{Acknowledgments}: The calculations in \S\ref{center}
were done while the author was visiting the
University of Aberdeen and the National University of Singapore. He
really appreciates the hospitality of the members of  
these institutes, especially, Jelena Grbic, Stephen Theriault,
Ran Levi, Jon Berrick, and Jie Wu. The author would also like to thank
the organizers of the MSI-SI conference ``Arrangements of Hyperplanes''
for invitation, during which the author learned the results in
\cite{Salvetti94} from Salvetti himself. It helped the author to 
understand and clarify the boundary formula proved in
\cite{math.AT/0602085}.

This work is partially supported by Grants-in-Aid for Scientific Research,
Ministry of Education, Culture, Sports, Science and Technology, Japan:
17540070

\section{Salvetti Complex and Oriented Matroid}
\label{Salvetti_complex}

Let us recall the definition and basic properties of the Salvetti
complex used this paper.

\subsection{Salvetti Complex for Real Central Arrangements}
\label{Salvetti_complex_basics}

A hyperplane arrangement $\mathcal{A}$ in a real vector space $V$
defines a stratification of $V$ by
\begin{eqnarray*}
 S_0 & = & V -\bigcup_{L\in\mathcal{A}} L \\
 S_1 & = & \bigcup_{L\in\mathcal{A}} L - \bigcup_{L,L'\in\mathcal{A}}
  L\cap L' \\
 & \vdots & \\
 S_{|\mathcal{L}|} & = & \bigcap_{L\in\mathcal{A}} L.
\end{eqnarray*}
Each stratum is a disjoint union of convex regions. These connected
components are called faces and the faces in the top stratum are called
chambers or topes. The set of faces is denoted by $\mathcal{L}(\mathcal{A})$
and the subset of chambers is denoted by
$\mathcal{L}^{(0)}(\mathcal{A})$. $\mathcal{L}(\mathcal{A})$ has a
structure of poset by
\[
 F\le G \Longleftrightarrow F\subset \overline{G}
\]
and called the face lattice.

When $\mathcal{A}$ is central, i.e.\ hyperplanes in $\mathcal{A}$ are
vector subspaces, 
Salvetti \cite{Salvetti87} constructed a
finite regular cell complex $\Sal(\mathcal{A})$ embedded in
$V\otimes\C -\bigcup_{L\in \mathcal{A}} L\otimes \C$ as a deformation
retract. Its cellular structure is determined by the face lattice
$\mathcal{L}(\mathcal{A})$ of the real arrangement $\mathcal{A}$.

A rough idea of the original construction of the Salvetti complex
$\Sal(\mathcal{A})$ is as follows:
 For each face $F$ in $\mathcal{A}$, choose a point $w(F)$ in $F$. For
 each pair of a face $F$ and a chamber $C$ with $F\le C$, define a point
 in $V\otimes\C$ by
 \[
  v(F,C) = w(F)\otimes 1 + (w(C)-w(F))\otimes i.
 \]
 The set of all such points is denoted by
 \[
 \sk_0\Sal(\mathcal{A}) = \{v(F,C) \mid F\in
 \mathcal{L}(\mathcal{A}), C\in \mathcal{L}^{(0)}(\mathcal{A}), F\le
 C\}.
 \]

The Salvetti complex $\Sal(\mathcal{A})$ is constructed as a Euclidean
simplicial complex embedded in
$V\otimes\C- \bigcup_{L\in\mathcal{A}} L\otimes \C$ by
forming simplices by choosing vertices from these points in a certain
manner. Salvetti defines a structure of a finite cell complex on
$\Sal(\mathcal{A})$ by combining several 
simplices together. Since all we need is this cell decomposition, we
recall this cellular structure, in stead of describing the rule for
simplices. The cell decomposition can be described in terms of the
face-chamber pairing, or the matroid product. The definition of the
matroid product can be found in \cite{Salvetti87,Arvola91}. An
alternative description will be given
later in Lemma \ref{MatroidProduct}. 

\begin{definition}
 For $F \in \mathcal{L}(\mathcal{A})$ and $C \in
 \mathcal{L}^{(0)}(\mathcal{A})$ with $F\le C$, define a subset of
 $\sk_0(\Sal(\mathcal{A}))$ by
 \[
  \mathcal{D}(F,C) = \{v(G,G\circ C) \mid G\ge F\},
 \]
 where $\circ$ is the matroid product.
 This set is regarded as a poset by
 \[
  v(G,G\circ C) \le v(H,H\circ C) \Longleftrightarrow G\le H.
 \]
 The (geometric realization of the) order complex of $\mathcal{D}(F,C)$
 is denoted by $D(F,C)$. 

\end{definition}

\begin{lemma}
 \label{D(F,C)}
 The complex $D(F,C)$ has the following properties:
 \begin{enumerate}
  \item The inclusion of vertices induces a simplicial embedding
	\[
	 D(F,C) \hookrightarrow \Sal(\mathcal{A}).
	\]
  \item $D(F,C)$ is homeomorphic to a disk of dimension $\codim F$.
  \item The boundary of $D(F,C)$ is given by
	\[
	 \partial D(F,C) = \bigcup_{G>F} D(G,G\circ C).
	\]
  \item The decomposition
	\[
	 \Sal(\mathcal{A}) = \bigcup_{v(F,C)\in
	\sk_0(\Sal(\mathcal{A}\otimes\C))} (D(F,C)-\partial D(F,C))
	\]
	defines a structure of a finite regular cell complex on
	$\Sal(\mathcal{A})$. 
 \end{enumerate}
\end{lemma}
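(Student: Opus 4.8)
The plan is to exploit the fact that $\mathcal{D}(F,C)$ is a poset with a \emph{least} element, so that its order complex $D(F,C)$ is automatically a cone, and then to identify the base of that cone with the link of $F$ in the face lattice. The one combinatorial input I would isolate first is the composition identity
\[
 H\circ(G\circ C) = (H\circ G)\circ C = H\circ C \qquad \text{whenever } H\ge G,
\]
which follows from associativity of the matroid product together with the elementary fact that $H\circ G=H$ for $H\ge G$ (as one checks on covectors, or from the description in Lemma \ref{MatroidProduct}). Granting this, for $G\ge F$ one gets $\mathcal{D}(G,G\circ C)=\{v(H,H\circ C)\mid H\ge G\}$, so that $\mathcal{D}(G,G\circ C)$ is exactly the principal filter of $\mathcal{D}(F,C)$ generated by $v(G,G\circ C)$. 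This makes part (1) and part (3) essentially formal. For part (1), the identity on vertices is a map of posets from $\mathcal{D}(F,C)$ onto a subposet of $\sk_0\Sal(\mathcal{A})$, hence sends chains to chains and induces a simplicial map; it is injective because distinct faces $G$ have distinct representatives $w(G)$ and hence distinct real parts in $v(G,G\circ C)$, and the resulting simplices are simplices of $\Sal(\mathcal{A})$ by Salvetti's rule for forming them.

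For part (2), since $F\le C$ gives $F\circ C=C$, the vertex $v(F,C)=v(F,F\circ C)$ is the least element of $\mathcal{D}(F,C)$, so $D(F,C)$ is the cone on the order complex of the proper part $\{G\mid G>F\}$. I would then identify $\{G\ge F\}$ with the face lattice of the central arrangement obtained by intersecting $\mathcal{A}$ with a transverse slice $N$ to $F$ at $w(F)$; this slice arrangement is central of rank $\codim F$, and its faces other than the central one tile the unit sphere of $N$. Hence the order complex of $\{G>F\}$ is PL-homeomorphic to $S^{\codim F-1}$, and its cone $D(F,C)$ is a disk of dimension $\codim F$. The degenerate cases $\codim F=0$ (where the only $G\ge F$ is $C$ itself, giving a point) and $\codim F=1$ should be recorded but are immediate.

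For part (3), the composition identity shows $D(G,G\circ C)$ is the full subcomplex of $D(F,C)$ spanned by the vertices $v(H,H\circ C)$ with $H\ge G$. A chain of $\mathcal{D}(F,C)$ avoiding the cone point $v(F,C)$ consists of vertices $v(H,H\circ C)$ with $H>F$, and such a chain lies in $D(G,G\circ C)$ for $G$ its minimal face; conversely every $D(G,G\circ C)$ with $G>F$ misses the cone point. Therefore $\bigcup_{G>F}D(G,G\circ C)$ is precisely the order complex of $\{G>F\}$, which is the base sphere of the cone, i.e.\ $\partial D(F,C)$.

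For part (4), parts (1)--(3) show each $D(F,C)$ is an embedded closed disk whose boundary is a union of the lower disks $D(G,G\circ C)$, so it remains to check that the open disks $D(F,C)-\partial D(F,C)$ are pairwise disjoint and cover $\Sal(\mathcal{A})$. Disjointness I would extract from the fact that the cone point $v(F,C)$ lies in the open disk and determines the pair $(F,C)$ (distinct pairs give distinct points $v(F,C)$), together with the boundary formula. The covering statement---that every simplex of Salvetti's simplicial complex occurs in some $D(F,C)$---is where I expect the real work to lie, and I would derive it directly from Salvetti's original prescription for the simplices of $\Sal(\mathcal{A})$ in terms of the matroid product. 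Regularity is then automatic, each closed cell being an embedded disk by (1) and (2). The main obstacle, accordingly, is not the local disk-and-boundary analysis, which is formal once the composition identity and the sphericity of the link are in hand, but the global bookkeeping in (4) confirming that these cones partition $\Sal(\mathcal{A})$.
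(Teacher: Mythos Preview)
The paper does not actually supply a proof of this lemma: it is stated as a recalled fact from Salvetti's original construction \cite{Salvetti87}, and the text immediately moves on to the description of the matroid product needed to apply it. So there is no ``paper's proof'' to compare against; what you have written is essentially a reconstruction of Salvetti's argument.

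That said, your outline is sound. The composition identity $H\circ(G\circ C)=H\circ C$ for $H\ge G$ is exactly right (and follows at once from the covector description in Lemma~\ref{MatroidProduct}), and it is the key to making (1) and (3) formal. Your argument for (2) via the cone structure and the identification of $\{G>F\}$ with the face poset of the normal-slice arrangement is the standard one; the sphericity of that link is precisely the fact that the nonzero faces of a rank-$k$ central arrangement give a regular CW structure on $S^{k-1}$, whose order complex is its barycentric subdivision. Your candid assessment of (4) is also accurate: the covering and disjointness of the open cells really do require unwinding Salvetti's simplicial prescription, and this is where the reference to \cite{Salvetti87} is doing the work the paper declines to reproduce. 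One small point worth tightening in (4): disjointness of the open cells is not quite implied by ``the cone point determines $(F,C)$''; you need that each maximal open simplex of $\Sal(\mathcal{A})$ containing $v(F,C)$ as its minimal vertex lies in $D(F,C)$ and in no other $D(F',C')$, which again comes down to Salvetti's rule for admissible chains.
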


In order to compute the boundary, therefore, we need to understand the
matroid product. The following elementary fact is very useful.

\begin{lemma}
 \label{FaceLattice}

 Given a real central hyperplane arrangement
 $\mathcal{A} = \{L_1,\cdots, L_n\}$ in a real inner product space
 $(V,\langle-,-\rangle)$, 
 choose a normal vector $a_i$ for each hyperplane $L_i$ 
 \[
  L_i = \{x\in V \mid \langle a_i,x\rangle = 0\}
 \]
 and define $\mathcal{V}(\mathcal{A}) = \{a_1,\cdots, a_n\}$.

 Let $S_1=\{0,+1,-1\}$ be the poset with $0<+1,-1$. For $F \in
 \mathcal{L}(\mathcal{A})$, define 
 \[
  \tau_F : \mathcal{V}(\mathcal{A}) \longrightarrow S_1
 \]
 by
 \[
  \tau_F(a) = \sign \langle a,F\rangle,
 \]
 where
 \[
  \sign : \R \longrightarrow S_1
 \]
 is the sign function.

 Then we obtain an embedding
 \[
  \tau : \mathcal{L}(\mathcal{A}) \hookrightarrow
 \Map(\mathcal{V}(\mathcal{A}), S_1).
 \]
\end{lemma}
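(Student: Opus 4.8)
The plan is to deduce the whole statement from the single observation that the faces of $\mathcal{A}$ are exactly the nonempty fibers of the global sign map
\[
 \sigma : V \longrightarrow \Map(\mathcal{V}(\mathcal{A}), S_1), \qquad \sigma(x)(a_i) = \sign\langle a_i, x\rangle .
\]
First I would verify that $\sigma$ is constant on each face. Writing $Z(x) = \{i \mid \langle a_i,x\rangle = 0\}$ for the set of hyperplanes through $x$, the set $Z$ is locally constant on a stratum, and for the indices $i \notin Z(x)$ the continuous nonvanishing function $x \mapsto \langle a_i,x\rangle$ keeps a fixed sign along any connected subset; hence $\sigma$ is constant on each connected component of a stratum, i.e.\ on each face. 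Conversely, a nonempty fiber $\sigma^{-1}(\epsilon)$ is the intersection of the open half-spaces and the hyperplanes prescribed by $\epsilon$, so it is convex, hence connected, and it sits inside a single stratum because $\epsilon$ fixes the common zero set. Comparing the two statements shows that each face coincides with a nonempty fiber of $\sigma$. This identification is precisely the assertion that $\tau_F = \sigma|_F$ is well defined independently of the chosen point of $F$ and that $F \mapsto \tau_F$ is injective.

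It then remains to check that $\tau$ is a poset embedding, that is, $F \le G$ iff $\tau_F \le \tau_G$ in the pointwise order on $\Map(\mathcal{V}(\mathcal{A}), S_1)$. For this I would use the sign-vector description of the closure of a face, obtained by relaxing the strict inequalities that cut out $G$ to weak ones:
\[
 \overline{G} = \rset{x \in V}{\sigma(x)(a_i) \le \tau_G(a_i) \text{ in } S_1 \text{ for every } i}.
\]
Granting this, both implications are immediate. If $F \le G$, i.e.\ $F \subseteq \overline{G}$, then any $x \in F$ gives $\tau_F = \sigma(x) \le \tau_G$; conversely, if $\tau_F \le \tau_G$, then every $x \in F$ satisfies $\sigma(x) = \tau_F \le \tau_G$, so $F \subseteq \overline{G}$ and $F \le G$.

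The only genuinely non-formal ingredient, and the step I expect to be the main obstacle, is the closure formula for $\overline{G}$; everything else is bookkeeping in the three-element poset $S_1$. The inclusion of $\overline{G}$ in the weakly-defined set is clear, since each weak inequality defines a closed set containing $G$. For the reverse inclusion I would fix a point $y \in G$ and, given any $x$ satisfying the weak inequalities, consider the segment $(1-t)x + ty$: a direct check of each coordinate $\langle a_i, (1-t)x+ty\rangle$ shows it lies in $G$ for all $t \in (0,1]$ and converges to $x$ as $t \to 0^+$, exhibiting $x$ as a limit of points of $G$. Once this standard convexity fact is in hand, no further geometric input is required and the lemma follows.
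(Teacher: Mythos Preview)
Your argument is correct. The identification of faces with nonempty fibers of the global sign map, the closure formula via the segment $(1-t)x+ty$, and the deduction that $\tau$ preserves and reflects the partial order are all valid; the segment check works because when $\tau_G(a_i)=\pm 1$ the condition $\sigma(x)(a_i)\le\tau_G(a_i)$ in $S_1$ forces $\langle a_i,x\rangle$ to be either zero or of the same sign as $\langle a_i,y\rangle$, so the convex combination is strictly on the correct side for $t>0$.

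As for the comparison: the paper does not actually prove this lemma. It is stated as an ``elementary fact'' and the reader is referred to Gel$'$fand--Rybnikov and the oriented-matroids book for the description of the image and related matters. Your write-up therefore supplies what the paper omits, and does so by the standard convexity argument one would expect; there is no alternative approach in the paper to contrast it with.
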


It is possible to give an explicit description of the subset
$\tau(\mathcal{L}(\mathcal{A}))$ by using the language of oriented matroid.
See the paper \cite{Gelfand-Rybnikov89} by Gel$'$fand and Rybnikov or
the book on oriented matroids \cite{OrientedMatroids} by five authors for
more details. 

With this identification, the face-chamber pairing in the face lattice
$\mathcal{L}(\mathcal{A})$ can be translated into the following product
of functions.

\begin{lemma}
 \label{MatroidProduct}
 Let $E$ be a set. For $\varphi, \psi \in \Map(E,S_1)$ define
 $\varphi\circ\psi \in \Map(E,S_1)$ by
 \[
  (\varphi\circ\psi)(x) = \begin{cases}
			   \varphi(x), & \varphi(x)\neq 0 \\
			   \psi(x), & \varphi(x) = 0.
			  \end{cases}
 \]

 Then for a real central arrangement $\mathcal{A}$, we have
 \[
  \tau_{F}\circ\tau_{G} = \tau_{F\circ G}
 \]
 for $F, G \in \mathcal{L}(\mathcal{A})$.
\end{lemma}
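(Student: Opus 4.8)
The plan is to use the geometric description of the matroid product and reduce the claimed identity to a pointwise sign computation, invoking the injectivity of $\tau$ from Lemma~\ref{FaceLattice}. Recall that for faces $F, G \in \mathcal{L}(\mathcal{A})$, the matroid product $F\circ G$ is the face that one enters upon perturbing a point of $F$ infinitesimally toward $G$: fixing representatives $w(F)\in F$ and $w(G)\in G$, it is the unique face whose relative interior contains the point
\[
 p_{\epsilon} = w(F) + \epsilon\,(w(G)-w(F))
\]
for all sufficiently small $\epsilon>0$. Since Lemma~\ref{FaceLattice} says that $\tau$ is injective, it suffices to prove that $\tau_{F\circ G}$ and $\tau_F\circ\tau_G$ agree as elements of $\Map(\mathcal{V}(\mathcal{A}),S_1)$, i.e.\ that $\tau_{p_{\epsilon}}(a) = (\tau_F\circ\tau_G)(a)$ for every normal vector $a\in\mathcal{V}(\mathcal{A})$ and small $\epsilon$.

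First I would check that $p_{\epsilon}$ lies in a single well-defined face for all small enough $\epsilon>0$. For each of the finitely many $a_i\in\mathcal{V}(\mathcal{A})$, the quantity
\[
 \langle a_i, p_{\epsilon}\rangle = (1-\epsilon)\langle a_i, w(F)\rangle + \epsilon\langle a_i, w(G)\rangle
\]
is an affine function of $\epsilon$, so its sign is constant on some interval $(0,\epsilon_i)$. Taking $\epsilon$ below the minimum of the finitely many $\epsilon_i$, all the signs $\tau_{p_{\epsilon}}(a_i)$ stabilize, so $p_{\epsilon}$ lies in a fixed face, which realizes $F\circ G$.

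The heart of the matter is then the sign computation, carried out one normal vector at a time. Fix $a=a_i$ and small $\epsilon>0$. If $\tau_F(a)=\sign\langle a, w(F)\rangle\neq 0$, then the term $\langle a,w(F)\rangle$ dominates the expression above for small $\epsilon$, so $\sign\langle a, p_{\epsilon}\rangle = \sign\langle a, w(F)\rangle = \tau_F(a)$. If instead $\tau_F(a)=0$, i.e.\ $\langle a, w(F)\rangle = 0$, then $\langle a, p_{\epsilon}\rangle = \epsilon\langle a, w(G)\rangle$, whence $\sign\langle a, p_{\epsilon}\rangle = \sign\langle a, w(G)\rangle = \tau_G(a)$ since $\epsilon>0$. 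These two alternatives are exactly the two defining cases of $\varphi\circ\psi$, so $\tau_{p_{\epsilon}}(a) = (\tau_F\circ\tau_G)(a)$ in both. As this holds for every $a\in\mathcal{V}(\mathcal{A})$, we conclude $\tau_{F\circ G} = \tau_{p_{\epsilon}} = \tau_F\circ\tau_G$.

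The only real obstacle is bookkeeping rather than mathematics: one must confirm that the geometric perturbation description of the matroid product used here agrees with the definition given in \cite{Salvetti87,Arvola91}. This is standard in oriented matroid theory, where the composition of covectors is defined by precisely the rule for $\varphi\circ\psi$ and its geometric realization for a realizable arrangement is the infinitesimal-perturbation face described above. Once the definitions are matched, the identity follows immediately from the computation.
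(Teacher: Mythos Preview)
Your argument is correct. The paper actually states this lemma without proof, treating it as a standard translation between the geometric face--chamber pairing and the covector product in the associated oriented matroid (with a pointer to \cite{Salvetti87,Arvola91} and \cite{OrientedMatroids}). Your perturbation computation $\langle a, w(F)+\epsilon(w(G)-w(F))\rangle$ is exactly the routine verification the paper omits, and the only caveat you flag---that the geometric ``infinitesimal push from $F$ toward $G$'' description agrees with the cited definitions of the matroid product---is indeed the standard identification for realizable oriented matroids.
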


We may formally complexify the poset $\mathcal{L}(\mathcal{A})$ by using
the poset $S_2=\{0,+1,-1,+i,-i\}$ with ordering $0<\pm 1<\pm i$.

\begin{definition}
 Define $\Sigma_2$-equivariant inclusions
 \[
  i_1,i_2 : S_1 \hookrightarrow S_2
 \]
 by
 \begin{eqnarray*}
  i_1(0) & = & i_2(0) = 0 \\
  i_1(\pm 1) & = & \pm 1 \\
  i_2(\pm 1) & = & \pm i.
 \end{eqnarray*}
\end{definition}

\begin{definition}
 Let $E$ be a set and $L$ be a subset of $\Map(E,S_1)$. Define a
 subposet $L\otimes\C$ of $\Map(E,S_2)$ by
 \[
  L\otimes\C = \lset{(i_1)_*(X)\circ (i_2)_*(Y)}{X,Y\in L},
 \]
 where
 \[
  (i_1)_*, (i_2)_* : \Map(E,S_1) \longrightarrow \Map(E,S_2)
 \]
 are the maps induced by $i_1$ and $i_2$ and the matorid product $\circ$
 in $\Map(E,S_2)$ is defined by
 \[
  (\varphi\circ \psi)(x) = \begin{cases}
			    \varphi(x), & \varphi(x)\not\le \psi(x) \\
			    \psi(x), & \varphi(x)\le \psi(x).
			   \end{cases}
 \]
\end{definition}

Another useful way of describing the Salvetti complex is as follows. See
\cite{Bjorner-Ziegler92}, for example.

\begin{proposition}
 \label{Salvetti_by_poset}
 Let $\mathcal{A}$ be a real central arrangement in an inner product
 space $V$ and $\mathcal{V}(\mathcal{A})$ be a set of unit normal
 vectors of hyperplanes in $\mathcal{A}$. Define a subposet
 $\mathcal{L}^{(1)}(\mathcal{A})$ of $\mathcal{L}(\mathcal{A})\otimes\C$
 by
 \[
  \mathcal{L}^{(1)}(\mathcal{A}) = \lset{X\in
 \mathcal{L}(\mathcal{A})\otimes\C}{X(v)\neq 0 \text{ for all } v\in
 \mathcal{V}(\mathcal{A})}.  
 \]
 Then $\mathcal{L}^{(1)}(\mathcal{A})$ is isomorphic to the face poset
 $F(\Sal(A))$ of the Salvetti complex of $\mathcal{A}$ as posets. Thus
 we have an isomorphism of simplicial complexes
 \[
  B\mathcal{L}^{(1)}(\mathcal{A}) \cong \Sd(\Sal(A)),
 \]
 where $B(-)$ is the classifying space (order complex) functor and
 $\Sd(-)$ is the barycentric subdivision. 
\end{proposition}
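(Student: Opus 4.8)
The simplicial statement will follow formally once we have the poset isomorphism: for any finite regular cell complex $K$ the barycentric subdivision is the order complex of its face poset, $\Sd(K)\cong BF(K)$, so an isomorphism $F(\Sal(\mathcal{A}))\cong\mathcal{L}^{(1)}(\mathcal{A})$ yields $\Sd(\Sal(\mathcal{A}))\cong BF(\Sal(\mathcal{A}))\cong B\mathcal{L}^{(1)}(\mathcal{A})$. It is even harmless if the comparison map reverses order, since the order complex of a poset and of its opposite coincide. Thus the entire problem is to identify the two posets. First I would read the face poset off Lemma \ref{D(F,C)}: the cells are the disks $D(F,C)$ indexed by pairs with $F\le C$ and $C$ a chamber, and iterating the boundary formula $\partial D(F,C)=\bigcup_{G>F}D(G,G\circ C)$ shows that the faces of $D(F,C)$ are exactly the $D(G,G\circ C)$ with $G\ge F$. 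Hence $F(\Sal(\mathcal{A}))$ has underlying set $\{(F,C):F\le C\}$ with order relation $(F',C')\le(F,C)\iff F\le F'\text{ and }C'=F'\circ C$.

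Next I would write down the comparison map. Using the embedding $\tau$ of Lemma \ref{FaceLattice}, set $\Psi(F,C)=(i_1)_*(\tau_C)\circ(i_2)_*(\tau_F)$. By the definition of $\mathcal{L}(\mathcal{A})\otimes\C$ this lies in $\mathcal{L}(\mathcal{A})\otimes\C$, and unwinding the $S_2$-product gives the explicit formula $\Psi(F,C)(a)=i\,\tau_C(a)$ when $\tau_F(a)\neq 0$ and $\Psi(F,C)(a)=\tau_C(a)$ when $\tau_F(a)=0$; since $C$ is a chamber this is nowhere $0$, so $\Psi(F,C)\in\mathcal{L}^{(1)}(\mathcal{A})$. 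For the inverse, given $Z=(i_1)_*(X)\circ(i_2)_*(Y)\in\mathcal{L}^{(1)}(\mathcal{A})$ I would recover the pair as $F:=Y$ and $C:=Y\circ X$. By Lemma \ref{MatroidProduct} the composite $Y\circ X$ is again the sign vector of a face; it is nowhere $0$ because $Z$ is, hence a chamber, and $Y\le Y\circ X$, so $(F,C)$ is a legitimate cell. A direct check with the product rule shows $\Psi(Y,Y\circ X)=Z$, and conversely that $F$ and $C$ are read back from $\Psi(F,C)$ (as the face supported on $\supp\tau_F=\{a:\tau_F(a)\neq 0\}$ carrying the imaginary entries, and as the image of $\Psi(F,C)$ under forgetting $i$). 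Hence $\Psi$ is a bijection, independent of the chosen representation of $Z$.

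It then remains to match the orders, and this is where the real work lies. I would compare $\Psi(F',C')$ with $\Psi(F,C)$ coordinatewise, assuming the face relation $F\le F'$, $C'=F'\circ C$, and splitting the normals $a$ into three classes: those in $\supp\tau_F$, those with $\tau_F(a)=0\neq\tau_{F'}(a)$, and those in the zero set of $F'$. On the first and third classes the two covectors agree (using $\tau_C=\tau_{C'}$ there, which is exactly the content of $C'=F'\circ C$), while on the middle class one has a real entry $\tau_C(a)$ below an imaginary entry $i\,\tau_{C'}(a)$, and here the relation $0<\pm 1<\pm i$ gives $\le$ uniformly, regardless of signs. This shows $\Psi(F,C)\le\Psi(F',C')$ pointwise, so $\Psi$ intertwines the two orders (reversing them, which by the first paragraph is immaterial for the $\Sd$ conclusion).

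The step I expect to be the main obstacle is the \emph{reflection} of the order, i.e.\ showing that if $\Psi(F,C)\le\Psi(F',C')$ holds pointwise in $S_2$ then necessarily $F\le F'$ and $C'=F'\circ C$. The forward comparison above is a finite case analysis, but the reverse implication is where the combinatorics of the arrangement genuinely enters: one must argue that the pointwise inequality forces the two chambers to agree off $\supp\tau_{F'}$ and to be governed by $F'\circ C$ on it, and that the real entries of the smaller covector assemble into a genuine face. This is precisely the point at which I would invoke the oriented-matroid description of $\tau(\mathcal{L}(\mathcal{A}))$ from Lemma \ref{FaceLattice} (equivalently, the composition and realizability properties of covectors), which guarantees that the sign vectors produced by the inverse map are carried by actual faces and chambers of $\mathcal{A}$. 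Once order-reflection is in hand, $\Psi$ is a poset (anti-)isomorphism, and applying $B(-)$ together with $\Sd(K)\cong BF(K)$ finishes the proof.
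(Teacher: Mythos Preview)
The paper does not prove this proposition at all; it simply states it and refers the reader to Bj\"orner--Ziegler. So there is nothing to compare against, and your proposal stands on its own.

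Your argument is correct. The bijection $\Psi$ and its inverse are set up properly; the key point you use implicitly is that from $Z=(i_1)_*(X)\circ(i_2)_*(Y)$ one can read off $Y$ completely (from the imaginary entries of $Z$) and $Y\circ X$ completely (since $X$ is determined on the zero set of $Y$, which is all $Y\circ X$ needs), so the inverse is well-defined independently of the representation. That $Y\circ X$ is again a covector is the only place oriented-matroid input is needed, and for a realizable arrangement it is immediate.

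The one place where you overestimate the difficulty is the order reflection. Once you have the explicit description $\Psi(F,C)(a)=i\tau_C(a)$ on $\supp\tau_F$ and $\tau_C(a)$ off it, the converse is a direct case check using only that in $S_2$ the elements $\pm i$ are maximal and $+1,-1$ are incomparable: if $\Psi(F,C)\le\Psi(F',C')$ pointwise and $Z(a)=\pm i$, maximality forces $Z'(a)=Z(a)$, giving $\supp\tau_F\subset\supp\tau_{F'}$ with agreement there (hence $F\le F'$); if $\tau_{F'}(a)=0$ then both values are real and comparability forces $\tau_C(a)=\tau_{C'}(a)$, while on $\supp\tau_{F'}$ one has $\tau_{C'}(a)=\tau_{F'}(a)=(F'\circ C)(a)$ automatically from $F'\le C'$. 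No further oriented-matroid machinery is required beyond what you already invoked for the bijection.
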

\subsection{Salvetti Complex for Reflection Arrangements}
\label{Salvetti_complex_reflection}

When $\mathcal{A}$ is a reflection arrangement, Salvetti analyzed the
cellular structure of $\Sal(\mathcal{A})/G(\mathcal{A})$ in
\cite{Salvetti94}, where $G(\mathcal{A})$ is the reflection group
associated with $\mathcal{A}$. In particular, he described
the boundary in the cellular cochain complex.
Salvetti's work includes the case of affine reflection groups. 
Here we only consider central arrangements.

Let $\mathcal{A}$ be a real central reflection arrangement in $V$ and
$\mathcal{L}(\mathcal{A})$ be the face poset. We have a cellular
decomposition (stratification) of $V$
\[
 V = \coprod_{F\in\mathcal{L}(\mathcal{A})} F.
\]
The cell complex dual to this cellular decomposition is denoted by
$C(\mathcal{A})$, whose face poset is denoted by
$F(C(\mathcal{A}))$. One of the ways to construct $C(\mathcal{A})$ is to
choose a chamber $C_0\in \mathcal{L}^{(0)}(\mathcal{A})$ and a point
$v_0$ inside of $C_0$ and to take the convex hull of the
$G$-orbit of $v_0$
\[
 C(\mathcal{A}) = \Conv(gv_0 \mid g\in G(\mathcal{A})).
\]
The cellular structure of $C(\mathcal{A})$ is given by that of this
convex polytope.

When $e\in F(C(\mathcal{A}))$ is the face dual to
$F\in \mathcal{L}(\mathcal{A})$ we denote $e=F^*$ and $F=e^*$.

\begin{definition}
 For each face $e\in F(C(\mathcal{A}))$, let
 $\gamma(e)\in G(\mathcal{A})$ be the unique element of minimal length
 such that
 \[
  \gamma(e)^{-1}(e^*) \subset \overline{C_0}.
 \]
\end{definition}

Salvetti found the following description of
$\Sal(\mathcal{A})/G(\mathcal{A})$. 

\begin{theorem}
 \label{Sal/G}
 The cell complex $\Sal(\mathcal{A})/G(\mathcal{A})$ can be identified
 with the cell complex given by identifying those two cells $e,e'$ in
 $C(\mathcal{A})$ which are in the same $G(\mathcal{A})$-orbit by using
 the homeomorphism induced by the element $\gamma(e')\gamma(e)^{-1}$.
\end{theorem}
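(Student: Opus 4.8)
The plan is to exhibit the dual complex $C(\mathcal{A})$ as a fundamental domain for the reflection-group action on $\Sal(\mathcal{A})$ and to recover the quotient identifications from the minimal-length elements $\gamma(e)$. Write $W=G(\mathcal{A})$. Since $\mathcal{A}$ is a reflection arrangement, $W$ permutes the hyperplanes, hence acts on $\mathcal{L}(\mathcal{A})$ and, through the cellular description of Lemma~\ref{D(F,C)}, cellularly on $\Sal(\mathcal{A})$ by $w\cdot D(F,C)=D(wF,wC)$. A reflection group acts simply transitively on chambers, so $wC=C$ forces $w=1$ and the action on cells is free. Consequently $\Sal(\mathcal{A})\to\Sal(\mathcal{A})/W$ is the quotient by a free cellular action, $\Sal(\mathcal{A})/W$ carries a CW structure (not necessarily regular) whose cells are the $W$-orbits of the $D(F,C)$, and each orbit has a unique representative $D(F,C_0)$ with $F\le C_0$, obtained by the unique $w$ carrying $C$ to $C_0$.

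Next I would identify $C(\mathcal{A})$ with a subcomplex of $\Sal(\mathcal{A})$. Put $D_0=\bigcup_{F\le C_0}\overline{D(F,C_0)}$. Using the closure formula in Lemma~\ref{D(F,C)} together with $\tau_G\circ\tau_{C_0}=\tau_{G\circ C_0}$ (Lemma~\ref{MatroidProduct}), the cells of $D_0$ are exactly $\{D(G,G\circ C_0)\mid G\in\mathcal{L}(\mathcal{A})\}$: indeed the minimal face $\hat{0}=\bigcap_{L}L$ satisfies $\hat{0}\le C_0$ and $\hat{0}\le G$, so every $D(G,G\circ C_0)$ already lies in $\overline{D(\hat{0},C_0)}\subset D_0$. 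The assignment $D(G,G\circ C_0)\mapsto G^*$ is a dimension-preserving bijection of cells ($\dim=\codim G$) that reverses incidence, since $D(G',G'\circ C_0)\subset\overline{D(G,G\circ C_0)}$ iff $G'\ge G$ iff $G'^*\subset\overline{G^*}$. As $D_0$ and $C(\mathcal{A})$ are both regular cell complexes, this isomorphism of face posets upgrades to a cellular homeomorphism $D_0\cong C(\mathcal{A})$.

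Now pass to the quotient. The composite $D_0\hookrightarrow\Sal(\mathcal{A})\to\Sal(\mathcal{A})/W$ is onto, because every orbit of cells has its representative $D(F,C_0)$ inside $D_0$, and it is injective on each open cell; hence $\Sal(\mathcal{A})/W$ is the quotient of $D_0\cong C(\mathcal{A})$ obtained by identifying two cells $G^*,G'^*$ exactly when $D(G,G\circ C_0)$ and $D(G',G'\circ C_0)$ share a $W$-orbit. I would then check that this occurs precisely when $G$ and $G'$ are $W$-conjugate faces, which is the orbit relation appearing in the statement.

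It remains to pin down the gluing homeomorphisms, and this is where I expect the real work to be. For $e=G^*$ the element $\gamma(e)$ is the minimal-length representative of the coset of $\Stab_W(e^*)$ that carries $e^*=G$ onto its unique representative $F\le C_0$; here $\Stab_W(G)$ is the parabolic subgroup generated by the reflections in the walls through $G$. The crucial point is that minimality forces $F\circ(\gamma(e)^{-1}C_0)=C_0$ — equivalently, $\gamma(e)^{-1}C_0$ lies on the $C_0$-side of every hyperplane not containing $F$ — so that $\gamma(e)^{-1}\cdot D(G,G\circ C_0)=D(F,C_0)$ is exactly the canonical representative. Granting this, the element taking $D(G,G\circ C_0)$ to $D(G',G'\circ C_0)$ compatibly with canonical representatives is $\gamma(e')\gamma(e)^{-1}$, and the family $\{\gamma(e')\gamma(e)^{-1}\}$ satisfies the cocycle identity $(\gamma(e'')\gamma(e')^{-1})(\gamma(e')\gamma(e)^{-1})=\gamma(e'')\gamma(e)^{-1}$ automatically, so the identifications are consistent and reproduce exactly the complex described in the statement. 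The main obstacle is thus the identity $F\circ(\gamma(e)^{-1}C_0)=C_0$: deriving it from minimality and understanding how the matroid product $\circ$ interacts with the $W$-action is the Coxeter-combinatorial heart of the proof, and is precisely the point governed by Salvetti's boundary formula.
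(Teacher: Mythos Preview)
The paper does not supply a proof of this theorem: it is quoted as a result of Salvetti from \cite{Salvetti94}, so there is no argument in the present paper to compare yours against.

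Your reconstruction is nonetheless correct and follows the natural line. The identification of $D_0=\overline{D(\hat{0},C_0)}$ with the polytope $C(\mathcal{A})$ via $D(G,G\circ C_0)\mapsto G^*$ is exactly the expected duality, and your reduction of the gluing maps to the identity $F\circ(\gamma(e)^{-1}C_0)=C_0$ is the right diagnosis of the crux. That identity is a standard consequence of minimality: since $F\le C_0$, the pointwise stabilizer of $F$ is the standard parabolic generated by the simple reflections in the walls of $C_0$ through $F$; the minimal-length coset representative $\gamma(e)$ then satisfies $\ell(\gamma(e)s)>\ell(\gamma(e))$ for each such simple reflection $s$, which says precisely that $\gamma(e)^{-1}C_0$ and $C_0$ lie on the same side of every such wall, hence (as those reflections generate the stabilizer and its reflecting hyperplanes are exactly the hyperplanes through $F$) on the same side of every hyperplane containing $F$. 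This gives $F\circ(\gamma(e)^{-1}C_0)=C_0$, so $\gamma(e)^{-1}\cdot D(G,G\circ C_0)=D(F,C_0)$, and your description of the identifications by $\gamma(e')\gamma(e)^{-1}$ goes through.
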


In order to describe the boundary homomorphism in the cellular chain
complex, Salvetti defined an orientation on each cell in
$C(\mathcal{A})$. 

\begin{definition}
 Fix an ordering of hyperplanes $H_1, \cdots, H_n$ bounding the chosen
 chamber $C_0$. Let $v_i$ be the projection of $v_0$ onto $H_i$.
 Define
 \begin{eqnarray*}
  F(C_0) & = & \lset{F\in \mathcal{L}(\mathcal{A})}{F \subset
   \overline{C}_0}, \\
  F^*(C_0) & = & \lset{F^*\in F(C(\mathcal{A}))}{F\in F(C_0)}.  
 \end{eqnarray*}

 For a cell $e \in F^*(C_0)$, define an orientation on $e$ as
 follows. Let $H_{i_1}, \cdots, H_{i_k}$ be hyperplanes with 
 \[
  e^* \subset H_{i_1} \cap \cdots\cap H_{i_k}
 \]
 and $i_1<\cdots <i_k$. The orientation of $e$ is induced from the
 ordering $v_0,v_{i_1},\cdots, v_{i_k}$ under the inclusion
 \[
  \Conv(v_0,v_{i_1},\cdots,v_{i_k}) \subset \overline{e}.
 \]
 In general, define an orientation on $e \in C(\mathcal{A})$ in such a
 way that $\gamma(e)^{-1}$ is orientation preserving. 
\end{definition}

Under the above orientations, the incidence numbers among cells in
$C(\mathcal{A})$ are described as follows.

\begin{proposition}
 \label{incidence_number}
 Let $F\in F(C_0)$ and $G\in \mathcal{L}(\mathcal{A})$ with
 \[
  \overline{g G} \supset F
 \]
 for an element $g\in G(\mathcal{A})$ of the shortest length and
 $\dim G=\dim F+1$. Then  
 \[
  [F^*:G^*] = (-1)^{\ell(g)}[F^*,(gG)^*].
 \]
\end{proposition}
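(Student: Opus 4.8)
The plan is to compute the quotient incidence number $[F^*:G^*]$ by lifting the boundary computation to the polytope $C(\mathcal{A})$ and then tracking how the gluing homeomorphisms of Theorem~\ref{Sal/G} transport orientations. First I would record that, since $F\in F(C_0)$, the cell $F^*$ carries its canonical orientation coming from the ordered projections $v_0,v_{i_1},\dots,v_{i_k}$, and that the hypothesis $\overline{gG}\supset F$ together with $\dim G=\dim F+1$ makes $(gG)^*$ a codimension-one face of $F^*$ in $C(\mathcal{A})$, so that the geometric incidence number $[F^*,(gG)^*]$ is defined. The shortest-length hypothesis on $g$, combined with the fact that each $G(\mathcal{A})$-orbit of faces meets $\overline{C_0}$ in exactly one face, identifies $g=\gamma((gG)^*)$ and $\gamma(G^*)=\id$. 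Hence the gluing that carries the face $(gG)^*$ of $F^*$ onto the representative cell $G^*$ in the quotient is exactly the homeomorphism induced by $\gamma(G^*)\gamma((gG)^*)^{-1}=g^{-1}$.

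With this reduction, the discrepancy between $[F^*:G^*]$ and $[F^*,(gG)^*]$ is precisely the sign by which the gluing isometry $g^{-1}$ fails to match the chosen orientations on the two cells. Because incidence numbers are topological invariants, preserved by any homeomorphism that transports orientations, I would apply $g^{-1}$ to the entire incidence computation, moving $(gG)^*$ to $G^*\subset\overline{C_0}$ and $F^*$ to $(g^{-1}F)^*$; the two incidence numbers then differ by the product of the two comparison signs that measure the pushforward orientations $g^{-1}_*(\text{orientation of }F^*)$ and $g^{-1}_*(\text{orientation of }(gG)^*)$ against, respectively, the orientation of $(g^{-1}F)^*$ and the canonical orientation of $G^*$. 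The assertion of the proposition is that this net sign equals $(-1)^{\ell(g)}$.

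The key input is the standard Coxeter-theoretic identity that $g$, acting on the reflection representation $V$, has determinant $\det(g)=(-1)^{\ell(g)}$. I would make this explicit by writing $g=s_1\cdots s_{\ell(g)}$ as a reduced word in the reflections bounding $C_0$ and observing that each such reflection reverses the orientation of $V$; equivalently, $\ell(g)$ counts the reflection hyperplanes separating $C_0$ from $gC_0$. Using that $g$ carries the walls of $C_0$ containing $G$ bijectively onto the walls containing $gG$, and that $g$ intertwines the defining projection points via $g(\proj_{H}v_0)=\proj_{gH}(gv_0)$, the comparison of the ordered-projection data against a fixed reference orientation of $V$ accumulates exactly the sign $\det(g)=(-1)^{\ell(g)}$. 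Substituting this into the comparison of the previous paragraph yields $[F^*:G^*]=(-1)^{\ell(g)}[F^*,(gG)^*]$.

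The main obstacle is the orientation bookkeeping. One must verify that, beyond the determinant $\det(g)$, no further sign is produced by the boundary-orientation convention on the face $(gG)^*$ of the simplex $\Conv(v_0,v_{i_1},\dots,v_{i_k})$, by the linear ordering imposed on the bounding hyperplanes, or by the position of the omitted vertex; and one must check that the orientation change of $g$ on the affine span of the relevant cell is faithfully recorded by its determinant on all of $V$. Pinning these down---showing that the single surviving sign is precisely the global determinant---is the delicate step, and it is exactly where the reduced-word description of $g$ and the separating-hyperplane interpretation of $\ell(g)$ do the work.
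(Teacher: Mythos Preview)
The paper does not prove this proposition. It appears in \S\ref{Salvetti_complex_reflection} as part of a summary of results drawn from Salvetti's paper \cite{Salvetti94} on the cellular structure of $\Sal(\mathcal{A})/G(\mathcal{A})$ for reflection arrangements; the proposition is stated and then immediately used, with no argument supplied here. So there is no in-paper proof to compare your proposal against.

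Your outline is nonetheless the right shape. The decisive input is exactly what you identify: the orientation on a general cell $e$ of $C(\mathcal{A})$ is \emph{defined} so that $\gamma(e)^{-1}$ is orientation preserving, and for a Coxeter group element $g$ one has $\det(g)=(-1)^{\ell(g)}$ on $V$. Comparing the orientation on $G^*$ (transported from $F^*(C_0)$) with the orientation on $(gG)^*$ therefore introduces precisely the sign $(-1)^{\ell(g)}$. One clarification worth making: the sentence preceding the proposition says these are ``incidence numbers among cells in $C(\mathcal{A})$'', so you should read both $[F^*:G^*]$ and $[F^*,(gG)^*]$ as polytope incidence numbers under the specified orientations, rather than framing $[F^*:G^*]$ as a quotient incidence number; the content is the same, but this keeps your setup aligned with how the result is invoked in Lemma~\ref{boundary_formula}. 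Your closing caveat about stray signs from the ordered-hyperplane convention is well placed; that bookkeeping is exactly what is carried out in \cite{Salvetti94}.
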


\subsection{Maps between Salvetti complexes}
\label{maps_between_Salvetti}

The ``center of mass'' arrangement $\mathcal{C}_{n-1}^{\ell}$ is
obtained by adding hyperplanes to the braid arrangement
$\mathcal{A}_{n-1}$. And we have a contravariant inclusion
\[
 M_{\ell}(\C,n) \hookrightarrow F(\C,n)
\]
of the complements.
We also have corresponding maps on the Salvetti complexes.

\begin{lemma}
 Let $\mathcal{A}$ be a real central arrangement in a vector space $V$
 and $\mathcal{B}\subset\mathcal{A}$ be a subarrangement. Then the
 inclusion 
 \[
  i : \mathcal{B}\hookrightarrow \mathcal{A}
 \]
 induces a cellular map
 \[
  i^* : \Sal(\mathcal{A}) \longrightarrow \Sal(\mathcal{B})
 \]
 which makes the following diagram commutative up to homotopy
 \[
  \begin{diagram}
   \node{\Sal(\mathcal{A})} \arrow{e,t}{i^*} \arrow{s,J}
   \node{\Sal(\mathcal{B})} \arrow{s,J} \\
   \node{V\otimes\C-\bigcup_{H\in\mathcal{A}}H\otimes\C} \arrow{e}
   \node{V\otimes\C-\bigcup_{H\in\mathcal{B}}H\otimes\C}
  \end{diagram}
 \]
\end{lemma}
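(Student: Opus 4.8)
The plan is to define $i^*$ purely combinatorially, by restriction of covectors, and then to match it up to homotopy with the geometric map coming from the inclusion of complements together with Salvetti's deformation retraction. First I would fix normal vectors once and for all so that $\mathcal{V}(\mathcal{B})\subseteq\mathcal{V}(\mathcal{A})$: every hyperplane of $\mathcal{B}$ is a hyperplane of $\mathcal{A}$, and I use the same unit normal in both arrangements. By Proposition \ref{Salvetti_by_poset} the face posets of $\Sal(\mathcal{A})$ and $\Sal(\mathcal{B})$ are $\mathcal{L}^{(1)}(\mathcal{A})\subseteq\Map(\mathcal{V}(\mathcal{A}),S_2)$ and $\mathcal{L}^{(1)}(\mathcal{B})\subseteq\Map(\mathcal{V}(\mathcal{B}),S_2)$. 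I then define
\[
 \rho\colon \Map(\mathcal{V}(\mathcal{A}),S_2)\longrightarrow \Map(\mathcal{V}(\mathcal{B}),S_2),\qquad \rho(X)=X|_{\mathcal{V}(\mathcal{B})},
\]
and take $i^*$ to be the map induced by $\rho$ on the corresponding complexes.

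Next I would verify that $\rho$ carries $\mathcal{L}^{(1)}(\mathcal{A})$ into $\mathcal{L}^{(1)}(\mathcal{B})$ and induces a cellular map, in three steps. For well-definedness, an element $X=(i_1)_*(\tau_F)\circ(i_2)_*(\tau_C)$ with $F\le C$ restricts — since restriction commutes with the pointwise product $\circ$ of Lemma \ref{MatroidProduct} — to $(i_1)_*(\tau_F|_{\mathcal{V}(\mathcal{B})})\circ(i_2)_*(\tau_C|_{\mathcal{V}(\mathcal{B})})$, and $\tau_F|_{\mathcal{V}(\mathcal{B})}=\tau_{r(F)}$, where $r\colon\mathcal{L}(\mathcal{A})\to\mathcal{L}(\mathcal{B})$ sends $F$ to the unique $\mathcal{B}$-face whose relative interior contains $\ri(F)$; this last identity is precisely the fact that, under the embedding of Lemma \ref{FaceLattice}, the covectors of a subarrangement are the restrictions of covectors. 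Hence $\rho(X)\in\mathcal{L}(\mathcal{B})\otimes\C$, and since $X$ is nowhere zero on $\mathcal{V}(\mathcal{A})\supseteq\mathcal{V}(\mathcal{B})$ its restriction is nowhere zero on $\mathcal{V}(\mathcal{B})$, so $\rho(X)\in\mathcal{L}^{(1)}(\mathcal{B})$. Second, $\rho$ is a poset map because the order on $\mathcal{L}(-)\otimes\C$ is the pointwise order from $S_2$, which is preserved by restriction. Third, for cellularity I note that the dimension of the cell indexed by $X$ is the codimension of its underlying face (Lemma \ref{D(F,C)}), and passing to $\mathcal{B}$ enlarges the face, so $\codim r(F)\le\codim F$: the cell dimension never increases. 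A poset map of face posets that does not raise cell dimension sends the $k$-skeleton into the $k$-skeleton, so the induced simplicial map $B\rho\colon\Sd\Sal(\mathcal{A})\to\Sd\Sal(\mathcal{B})$ is cellular for the unsubdivided structures; this is $i^*$.

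It remains to prove the square commutes up to homotopy. Write $e_\mathcal{A},e_\mathcal{B}$ for the two vertical Salvetti embeddings (deformation retracts), $j$ for the bottom inclusion of complements, and $\pi_\mathcal{B}$ for the retraction homotopy-inverse to $e_\mathcal{B}$. Since $e_\mathcal{B}$ is a homotopy equivalence, it suffices to show $i^*\simeq g$, where $g:=\pi_\mathcal{B}\circ j\circ e_\mathcal{A}$. The key observation is that both maps send each closed cell $\overline{D(F,C)}$ of $\Sal(\mathcal{A})$ into the closed cell $\overline{D(r(F),r(C))}$ of $\Sal(\mathcal{B})$, compatibly with face relations. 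For $i^*$ this is immediate, since $\rho$ takes the index $X$ of $D(F,C)$ to the index of $D(r(F),r(C))$. For $g$ it follows from the fact that Salvetti's retraction is stratified by the $\mathcal{B}$-covector: the vertex $v(F,C)=w(F)\otimes 1+(w(C)-w(F))\otimes i$ pairs with $a\in\mathcal{V}(\mathcal{B})$ to give $\langle a,w(F)\rangle+i\langle a,w(C)-w(F)\rangle$, whose $S_2$-sign pattern is exactly $\rho(X)(a)$, so $v(F,C)$ retracts into $\overline{D(r(F),r(C))}$. Granting this, $g$ and $i^*$ are cellular maps landing cell by cell in the same contractible target disk, and an induction over skeleta — extending the homotopy already built on $\partial D(F,C)$ across $D(F,C)\times I$ into the contractible disk $\overline{D(r(F),r(C))}$ — produces $g\simeq i^*$ and hence the homotopy commutativity.

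The hard part will be the geometric input of the last paragraph: pinning down the effect of Salvetti's deformation retraction on the vertices $v(F,C)$, i.e. checking that $\pi_\mathcal{B}$ carries $v(F,C)$ into the cell prescribed by the restricted covector $\rho(X)$. This is where one must invoke the explicit, covector-stratified form of Salvetti's retraction rather than merely its abstract existence as a deformation retract; once this compatibility is established, everything else — the cellularity of $i^*$ and the homotopy $g\simeq i^*$ — is formal.
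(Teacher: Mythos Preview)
Your construction of $i^*$ via restriction of covectors is exactly the paper's approach: the inclusion $\mathcal{B}\hookrightarrow\mathcal{A}$ induces a map of face lattices $\mathcal{L}(\mathcal{A})\to\mathcal{L}(\mathcal{B})$ (a strong map of oriented matroids), hence a poset map $\mathcal{L}(\mathcal{A})\otimes\C\to\mathcal{L}(\mathcal{B})\otimes\C$ which, being restriction, preserves the nowhere-zero condition and so descends to $\mathcal{L}^{(1)}(\mathcal{A})\to\mathcal{L}^{(1)}(\mathcal{B})$; Proposition~\ref{Salvetti_by_poset} then yields the map on Salvetti complexes. Your extra care about cellularity (checking that $\codim r(F)\le\codim F$) is a nice explicit touch the paper leaves implicit.

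Where you diverge from the paper is in the homotopy commutativity. You propose to compare $i^*$ with $g=\pi_{\mathcal{B}}\circ j\circ e_{\mathcal{A}}$ by showing both are cellular and land each $\overline{D(F,C)}$ in the same contractible disk $\overline{D(r(F),r(C))}$, then run an obstruction-theoretic homotopy by induction over skeleta. This is correct in outline, and you are right that the only nontrivial input is the covector-stratified nature of Salvetti's retraction $\pi_{\mathcal{B}}$. The paper bypasses this entirely with a much shorter device: since the embeddings of the Salvetti complexes depend on the \emph{choices} of points $w(F)\in F$, one first chooses $w(F)$ for every face $F\in\mathcal{L}(\mathcal{A})$ and then, for each face $G\in\mathcal{L}(\mathcal{B})$, takes $w(G)$ to be one of the already chosen $w(F)$ with $F\subset G$. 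With these compatible choices the two embeddings are coherent enough that the diagram commutes, and no appeal to the explicit form of the retraction is needed. Your approach is more robust (it works for any choice of embeddings) but costs you the analysis of $\pi_{\mathcal{B}}$; the paper's trick is shorter but relies on the freedom in Salvetti's construction.
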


\begin{proof}
 The inclusion $i : \mathcal{B} \hookrightarrow \mathcal{A}$ induces a
 map of face lattices
 \[
  i^* : \mathcal{L}(\mathcal{A}) \longrightarrow
 \mathcal{L}(\mathcal{B}) 
 \]
 (i.e.\ it induces a strong map between oriented matroids) which induces
 a map of posets
 \[
  i^* : \mathcal{L}(\mathcal{A})\otimes\C \longrightarrow
 \mathcal{L}(\mathcal{B})\otimes\C.
 \]
 Since $i^*$ is given by restriction, we obtain
 \[
  i^* : \mathcal{L}^{(1)}(\mathcal{A}) \longrightarrow
 \mathcal{L}^{(1)}(\mathcal{B}) 
 \]
 and hence a map
 \[
  i^* : \Sal(\mathcal{A}) \longrightarrow \Sal(\mathcal{B})
 \]
 by Proposition \ref{Salvetti_by_poset}.

 The embeddings of the Salvetti complexes depend on choices of
 simplicial vertices corresponding to faces in the face lattices. We
 obtain embeddings by choosing $w(F)$ for
 $F\in \mathcal{L}(\mathcal{A})$ first and then by choosing vertices for
 $\mathcal{B}$ among $\{w(F)\}$ which make the required diagram
 commutative. 
\end{proof}

We would like to know the behavior of the chain map
\[
 i_* : C_*(\Sal(\mathcal{A})) \longrightarrow C_*(\Sal(\mathcal{B})).
\]

\begin{lemma}
 \label{i_is_surjective}

 Let $\mathcal{A}$ be a real central arrangement in $V$ and
 $\mathcal{B}\subset\mathcal{A}$ be a subarrangement. Then the inclusion
 \[
  i : \mathcal{B}\hookrightarrow \mathcal{A}
 \]
 induces a surjective chain map
 \[
 i^* : C_*(\Sal(\mathcal{A})) \longrightarrow
 C_*(\Sal(\mathcal{B})).
 \]
\end{lemma}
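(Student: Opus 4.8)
The plan is to work throughout with the combinatorial model of Proposition \ref{Salvetti_by_poset}, under which the cells of $\Sal(\mathcal{A})$ are the elements $X \in \mathcal{L}^{(1)}(\mathcal{A}) \subseteq \Map(\mathcal{V}(\mathcal{A}), S_2)$ and, by Lemma \ref{D(F,C)}, the cell attached to a pair $(F,C)$ has dimension $\codim F$. By the preceding lemma $i^*$ is induced by restriction of covectors along the inclusion $\mathcal{V}(\mathcal{B}) \subseteq \mathcal{V}(\mathcal{A})$, so it sends a cell $(F,C)$ of $\Sal(\mathcal{A})$ to the cell $(G,D)$ of $\Sal(\mathcal{B})$ in which $G$ is the $\mathcal{B}$-face whose relative interior contains that of $F$ and $D$ is the $\mathcal{B}$-chamber containing $C$. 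Since $i^*$ is an order-preserving map of face posets, it carries each closed cell $\overline{(F,C)}$ into the single closed cell $\overline{(G,D)}$; because both complexes are regular (Lemma \ref{D(F,C)}), $i^*$ restricts to a homeomorphism between the two cells whenever their dimensions agree and collapses the cell otherwise. Hence at chain level $i^*$ sends the basis element $(F,C)$ to $\pm(G,D)$ when $\codim_{\mathcal{A}} F = \codim_{\mathcal{B}} G$, and to $0$ otherwise. It therefore suffices to show that every cell $(G,D)$ of $\Sal(\mathcal{B})$ is the image, under a dimension-preserving instance of $i^*$, of some cell of $\Sal(\mathcal{A})$.

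To construct such a lift, fix a cell $(G,D)$ of $\Sal(\mathcal{B})$ and put $k = \codim_{\mathcal{B}} G$, so that the relative interior of $G$ is a relatively open subset of the flat $\bigcap\{L \in \mathcal{B} : G \subseteq L\}$. I would choose a point $p$ in the relative interior of $G$ lying off every hyperplane of $\mathcal{A}\setminus\mathcal{B}$ that does not already contain $G$; such $p$ exists since only finitely many hyperplanes are involved and none of them contains the relative interior of $G$. Let $F$ be the face of $\mathcal{A}$ having $p$ in its relative interior, and let $C$ be any chamber of $\mathcal{A}$ with $F \le C$ whose image chamber in $\mathcal{B}$ is $D$. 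Restricting $\tau_F$ and $\tau_C$ to $\mathcal{V}(\mathcal{B})$ recovers $\tau_G$ and $\tau_D$ (by Lemmas \ref{FaceLattice} and \ref{MatroidProduct}), so $i^*(F,C) = (G,D)$.

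The decisive point, which I expect to be the main obstacle, is to verify that this lift preserves dimension, i.e. that $\codim_{\mathcal{A}} F = k$. The hyperplanes of $\mathcal{A}$ containing $F$ are exactly those of $\mathcal{B}$ containing $G$ together with the hyperplanes of $\mathcal{A}\setminus\mathcal{B}$ passing through $p$; by the choice of $p$, the latter are precisely the removed hyperplanes that contain the entire relative interior of $G$. A hyperplane containing a relatively open subset of a flat contains the whole flat, so each such hyperplane contains $\bigcap\{L\in\mathcal{B} : G\subseteq L\}$ and therefore lies in the linear span of $\{L\in\mathcal{B}: G \subseteq L\}$; adjoining it does not increase the rank. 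Thus the family of $\mathcal{A}$-hyperplanes through $F$ has the same rank as the family of $\mathcal{B}$-hyperplanes through $G$, giving $\codim_{\mathcal{A}} F = \codim_{\mathcal{B}} G = k$ and $\dim(F,C) = \dim(G,D)$. By the first paragraph $i^*(F,C) = \pm(G,D)$, so $(G,D)$ lies in the image of the chain map; since $(G,D)$ was arbitrary, $i^*$ is surjective. Everything apart from this rank invariance is bookkeeping within the covector description, so the care should be concentrated on the claim that the only new hyperplanes meeting the lifted face are those containing the flat of $G$, which are redundant for computing codimension.
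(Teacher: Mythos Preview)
Your argument is correct and follows the same route as the paper's own proof, but you have supplied considerably more detail than the paper does. The paper's proof is essentially one sentence: since the face-lattice map $i^* : \mathcal{L}(\mathcal{A}) \to \mathcal{L}(\mathcal{B})$ is surjective, the induced map on cellular chains is surjective. You have unpacked the two points the paper leaves implicit --- that a lift of $(G,D)$ must be chosen so that $\codim_{\mathcal{A}} F = \codim_{\mathcal{B}} G$ (otherwise the cell is collapsed at chain level), and that such a lift exists by choosing $p$ generically in the relative interior of $G$ --- and your rank argument for the codimension equality is exactly what is needed. One small point you pass over quickly is the existence of a chamber $C$ of $\mathcal{A}$ with both $F \le C$ and $C \subseteq D$; this follows because the $\mathcal{A}$-chambers adjacent to $F$ fill a neighbourhood of $F$, which meets the open set $D$, but it is worth stating explicitly.
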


\begin{proof}
 Generators of $C_*(\Sal(\mathcal{B}))$ are in one-to-one correspondence
 with pairs $(F,C)$ of a face $F$ and a chamber $C$ in
 $\mathcal{L}(\mathcal{B})$. Since
 \[
  i^* : \mathcal{L}(\mathcal{A}) \longrightarrow
 \mathcal{L}(\mathcal{B}) 
 \]
 is surjective, it induces a surjective map on the cellular chain
 complexes of Salvetti complexes.
\end{proof}

\section{The Salvetti Complex for the Braid Arrangement}
\label{braid_arrangement}

We need to understand the cellular structure of the Salvetti complex of
the braid arrangement in order to compare it with that of the center of
mass arrangement.

\subsection{The Structure of Cell Complex}
\label{braid_arrangement_cells}

The braid arrangement is a typical example of reflection arrangements
and the results of \S\ref{Salvetti_complex_reflection} apply.
In particular, the cell structure of the Salvetti complex for the braid
arrangement can be described in terms of partitions. The following
symbols are introduced in \cite{math.AT/0602085}.

\begin{definition}
 A partition of $\{1,\cdots, n\}$ is a surjective map
 \[
 \lambda : \{1,\cdots,n\} \longrightarrow \{1,\cdots,n-r\}
 \]
 for some $0\le r < n$. The number $r$ is called the rank of this
 partition.
 
 The set of partitions of $\{1,\cdots,n\}$ is denoted by
 $\Pi_n$. The subset of rank $r$ partitions is denoted by
 $\Pi_{n,r}$. $\Pi_n$ becomes a poset under refinement.
 Note that rank $0$ partitions are nothing but elements of $\Sigma_n$.
\end{definition}

\begin{definition}
 \label{CubicalSymbol}
 For a partition $\lambda \in \Pi_n$ of rank $r$ and
 $\sigma \in \Sigma_n$ with $\sigma\ge \lambda$, define a symbol
 $S(\lambda,\sigma)$ as follows: 
 \begin{enumerate}
  \item For each $1\le i\le n-r$, draw vertically stacked squares $S_i$
	of length $|\lambda^{-1}(i)|$.
	\begin{center}
	 \begin{picture}(20,100)(0,0)
	  \put(0,0){\framebox(20,20)}
	  \put(0,20){\framebox(20,20)}
	  \put(0,40){\framebox(20,20)}
	  \put(0,60){\framebox(20,20)}
	  \put(0,80){\framebox(20,20)}
	 \end{picture}
	\end{center}
  \item Order $\lambda^{-1}(i)$ according to $\sigma$ and label each
	square in $S_i$  from bottom to top by elements in
	$\lambda^{-1}(i)$. For example, when
	$\lambda^{-1}(i) = \{i_1, i_2, i_3, i_4, i_5\}$ and if these
	numbers appear in $(\sigma(1),\cdots,\sigma(n))$ in the order
	\[
	 i_1,i_2,i_3,i_4,i_5
	\]
	then $S_i$ is labeled as
	\begin{center}
	 \begin{picture}(20,100)(0,0)
	  \put(0, 0){\framebox(20,20){$i_1$}}
	  \put(0,20){\framebox(20,20){$i_2$}}
	  \put(0,40){\framebox(20,20){$i_3$}}
	  \put(0,60){\framebox(20,20){$i_4$}}
	  \put(0,80){\framebox(20,20){$i_5$}}
	 \end{picture}
	\end{center}
  \item Place $S_1, \cdots, S_{n-r}$ side by side from left to
	right. $S(\lambda,\sigma)$ is the resulting picture.
	\begin{center}
	 \begin{picture}(80,100)(0,-20)
	  \put(0,-20){\makebox(20,20){$S_1$}}
	  \put(0, 0){\framebox(20,20){$i_{1,1}$}}
	  \put(0,20){\framebox(20,20){$i_{1,2}$}}
	  \put(0,40){\framebox(20,20){}}
	  \put(10,53){\makebox(0,0){$\vdots$}}
	  \put(0,60){\framebox(20,20){$i_{1,s_1}$}}

	  \put(20,-20){\makebox(20,20){$S_2$}}
	  \put(20, 0){\framebox(20,20){$i_{2,1}$}}
	  \put(20,20){\framebox(20,20){}}
	  \put(30,33){\makebox(0,0){$\vdots$}}
	  \put(20,40){\framebox(20,20){$i_{2,s_2}$}}

	  \put(40,20){\makebox(20,20){$\cdots$}}

	  \put(60,-20){\makebox(20,20){$S_{n-r}$}}
	  \put(60, 0){\framebox(20,20){}}
	  \put(60,20){\framebox(20,20){}}
	  \put(60,40){\framebox(20,20){}}
	  \put(60,60){\framebox(20,20){}}
	 \end{picture}
	\end{center}
 \end{enumerate}
\end{definition}

The following observation played an essential role in
\cite{math.AT/0602085}. 

\begin{lemma}
 \label{VertexSymbols}
 There is a bijection between the set of vertices (of the simplicial
 structure) $\sk_0\Sal(\mathcal{A}_{n-1})$ and the set of symbols
 \[
  \{S(\lambda,\sigma) \mid \lambda \in \Pi_n, \sigma\in\Sigma_n, 
 \lambda\le \sigma\}.
 \]

 Thus we obtain a bijection between the cells of
 $\Sal(\mathcal{A}_{n-1})$ and the symbols $S(\lambda, \sigma)$.
\end{lemma}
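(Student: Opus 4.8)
The plan is to translate the entire statement into the classical combinatorial model of the braid arrangement and then to check that the pictorial symbol $S(\lambda,\sigma)$ is merely a faithful bookkeeping device for the data it encodes. First I would recall the standard identification of the face lattice $\mathcal{L}(\mathcal{A}_{n-1})$ with the poset $\Pi_n$ of ordered set partitions of $\{1,\dots,n\}$: a face $F$ of the braid arrangement is determined by specifying, for each pair $i,j$, whether $x_i<x_j$, $x_i=x_j$, or $x_i>x_j$ on $F$, and this datum is exactly an ordered set partition, i.e.\ a surjection $\lambda\colon\{1,\dots,n\}\to\{1,\dots,n-r\}$ with $\lambda(i)<\lambda(j)$ iff $x_i<x_j$ on $F$. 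Under this dictionary the chambers $\mathcal{L}^{(0)}(\mathcal{A}_{n-1})$ are precisely the faces on which all coordinates are distinct, i.e.\ the bijections, i.e.\ the rank-$0$ partitions $\Sigma_n$; and the incidence relation $F\le G\Leftrightarrow F\subset\overline{G}$ corresponds to the refinement order with the finer partition larger. In particular $F\le C$ for a chamber $C$ becomes exactly the relation $\lambda\le\sigma$ appearing in the statement.

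Next I would feed this into the definition of the simplicial vertex set. By construction $\sk_0\Sal(\mathcal{A}_{n-1})=\{v(F,C)\mid F\le C\}$, so $v(F,C)\mapsto(\lambda_F,\sigma_C)$ is a bijection from $\sk_0\Sal(\mathcal{A}_{n-1})$ onto the set of pairs $(\lambda,\sigma)$ with $\lambda\in\Pi_n$, $\sigma\in\Sigma_n$, and $\lambda\le\sigma$. It then remains to verify that $(\lambda,\sigma)\mapsto S(\lambda,\sigma)$ is itself a bijection onto the indicated set of symbols. Surjectivity is immediate from the definition of the symbol set; for injectivity I would exhibit the inverse explicitly. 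From a symbol $S(\lambda,\sigma)$ one recovers $\lambda$ from the partition of the labels into the columns $S_1,\dots,S_{n-r}$ taken in left-to-right order, and one recovers $\sigma$ by reading all labels off the picture columns left-to-right and, within each column, bottom-to-top. The hypothesis $\lambda\le\sigma$ is precisely what guarantees that the blocks of $\lambda$ occur as consecutive intervals of $\sigma$ in the correct order, so that this reading reproduces the full one-line word of $\sigma$; hence distinct pairs yield distinct pictures and the map is a bijection. Composing the two bijections gives the asserted bijection between $\sk_0\Sal(\mathcal{A}_{n-1})$ and the symbols.

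Finally, the ``thus'' clause is formal: by Lemma~\ref{D(F,C)}(4) the regular cells of $\Sal(\mathcal{A}_{n-1})$ are indexed by the points $v(F,C)\in\sk_0\Sal(\mathcal{A}_{n-1})$, each cell being $D(F,C)-\partial D(F,C)$, so the cells are in natural bijection with the simplicial vertices and therefore with the symbols $S(\lambda,\sigma)$. The step I expect to demand the most care is the order matching in the first paragraph: pinning down that $F\subset\overline{C}$ corresponds to ``$\sigma$ refines $\lambda$'' under the ordered-partition convention, and in particular that the within-block orderings induced by $\sigma$ (which the symbol records column by column) together with the block order of $\lambda$ recover $\sigma$ uniquely. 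Everything else is routine bookkeeping once this identification is fixed; the reader may wish to check the correspondence on $n=2$, where the four symbols match the two $0$-cells and two $1$-cells of the circle $\Sal(\mathcal{A}_1)$, with $\codim F_\lambda$ equal to the rank of $\lambda$.
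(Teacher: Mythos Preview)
Your argument is correct. The paper does not supply a proof of this lemma; it is stated as an observation taken from \cite{math.AT/0602085}, so there is no in-paper proof to compare against. Your approach---identifying $\mathcal{L}(\mathcal{A}_{n-1})$ with $\Pi_n$, chambers with $\Sigma_n$, the incidence $F\le C$ with refinement $\lambda\le\sigma$, and then checking that the symbol $S(\lambda,\sigma)$ faithfully records the pair---is exactly the intended unpacking and is the standard way to establish this bijection.
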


In order to compute the boundary maps of the cellular chain complex of
$\Sal(\mathcal{A}_{n-1})$, we need to fix orientations of cells. We
follow the orientations defined in \S\ref{Salvetti_complex_reflection}.

We choose the chamber
\[
 C_0 = \lset{(x_1,\cdots,x_n)\in\R^n}{x_1<x_2<\cdots<x_n}
\]
and define $v_0=(1,2,\cdots,n)\in C_0$. Then 
\[
 C(\mathcal{A}_{n-1}) = \Conv(v_0\sigma \mid \sigma\in\Sigma_n).
\]

We have the following refinement of Theorem \ref{Sal/G}.

\begin{proposition}
 We have the following isomorphism of $\F_p$-modules
 \begin{eqnarray*}
  C_*(\Sal(\mathcal{A}_{n-1}))\otimes_{\Sigma_n}\F_p(\pm 1) & \cong &
 \F_p\left\langle [e] \mid [e] \in F(C(\mathcal{A}_{n-1}))/\Sigma_n)
 \right\rangle \\
  & \cong & 
   \F_p\left\langle e \mid e \in F^*(C_0) \right\rangle.
 \end{eqnarray*}
\end{proposition}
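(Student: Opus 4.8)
The plan is to exhibit $C_*(\Sal(\mathcal{A}_{n-1}))$ as a \emph{free} $\Z[\Sigma_n]$-module and then to observe that tensoring a free module over the group ring with a one-dimensional representation is a purely formal operation. The geometric input I would use is that $\Sigma_n$ acts freely on the cells of $\Sal(\mathcal{A}_{n-1})$: this underlies Theorem \ref{Sal/G}, since $C(\mathcal{A}_{n-1})$ is the permutohedron $\Conv(v_0\sigma \mid \sigma\in\Sigma_n)$, on whose faces $\Sigma_n$ acts freely, and the closed chamber $\overline{C_0}$ is a strict fundamental domain. Consequently the faces in $F^*(C_0)$ form a complete and irredundant set of orbit representatives, and by Theorem \ref{Sal/G} the $\Sigma_n$-orbits of cells are precisely the cells of the quotient, i.e.\ the elements of $F(C(\mathcal{A}_{n-1}))/\Sigma_n$.

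First I would fix, once and for all, the orientations of cells prescribed in \S\ref{Salvetti_complex_reflection} (so that $\gamma(e)^{-1}$ is orientation preserving). For a single orbit $\{\sigma e \mid \sigma\in\Sigma_n\}$ of an oriented representative $e\in F^*(C_0)$, the collection $\{\sigma e\}$ is a $\Z$-basis of the corresponding summand of the cellular chain group, and $\rho\cdot(\sigma e) = (\rho\sigma)e$; hence this summand is the free rank-one module $\Z[\Sigma_n]\,e$. Summing over orbits shows that $C_*(\Sal(\mathcal{A}_{n-1}))$ is free over $\Z[\Sigma_n]$ with basis indexed by $F^*(C_0)$. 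The only care needed here is that the \emph{signs} relating the abstract basis element $\sigma e$ to the independently oriented geometric cell do not obstruct freeness; they do not, precisely because $\Sigma_n$ permutes the oriented cells simply transitively within an orbit.

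With freeness in hand, both isomorphisms are formal. Using $\Z[\Sigma_n]\otimes_{\Sigma_n}\F_p(\pm 1)\cong\F_p$, with the class of $\sigma$ sent to $\sgn(\sigma)$, I would compute
\[
 C_*(\Sal(\mathcal{A}_{n-1}))\otimes_{\Sigma_n}\F_p(\pm 1)
 \cong \bigoplus_{e} \left(\Z[\Sigma_n]\,e\right)\otimes_{\Sigma_n}\F_p(\pm 1)
 \cong \bigoplus_{e}\F_p,
\]
the sum running over $e\in F^*(C_0)$, with the generator of each summand represented by the oriented cell $e$. Reading the orbit decomposition through Theorem \ref{Sal/G} gives the first displayed isomorphism (generators indexed by orbits $[e]\in F(C(\mathcal{A}_{n-1}))/\Sigma_n$), and replacing each orbit by its canonical representative in the fundamental domain gives the second.

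I expect the main obstacle to be the rigorous verification of the two geometric facts that legitimize freeness: that no nonidentity permutation fixes a cell of $\Sal(\mathcal{A}_{n-1})$, and that the sign character is exactly the twist recording how $\Sigma_n$ acts on orientations of cells. The first is reflection-arrangement input (the $\Sigma_n$-action on $F(\C,n)$, hence on its equivariant deformation retract $\Sal(\mathcal{A}_{n-1})$, is free), while the second is the reason the coefficient module $\F_p(\pm 1)$ appears: the orientation convention of \S\ref{Salvetti_complex_reflection} encodes the determinant $\sgn(\sigma)=(-1)^{\ell(\sigma)}$, so that $\F_p(\pm 1)$ is precisely the coefficient system for which the resulting complex on $F^*(C_0)$ carries the incidence numbers of Proposition \ref{incidence_number}. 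For the present statement, which asserts only an isomorphism of $\F_p$-modules, the character could a priori be arbitrary; the sign is singled out because it is the twist needed for the boundary formula used downstream.
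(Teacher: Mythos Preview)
Your proposal is correct and follows essentially the same route as the paper: reduce to a formal tensor-product computation by recognizing that the cellular chain complex is a free $\Z[\Sigma_n]$-module with basis indexed by $F^*(C_0)$, then invoke Theorem \ref{Sal/G} to identify orbit representatives with cells of the quotient. The paper compresses this into the single line ``$\Z\langle S\rangle\otimes_{\Sigma_n}\F_p(\pm 1)\cong\Z\langle S/\Sigma_n\rangle\otimes\F_p$'' and an appeal to \cite{Salvetti94}; your version is in fact more careful, since that displayed isomorphism is only valid when the $\Sigma_n$-action on $S$ is free (otherwise an odd stabilizer kills the corresponding generator when $p$ is odd), and you explicitly justify freeness via the free action of $\Sigma_n$ on $F(\C,n)$.
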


\begin{proof}
 Let $S$ be a set with an action of $\Sigma_n$. Then we have an
 isomorphism of $\F_p$-modules
 \[
  \Z\langle S\rangle\otimes_{\Sigma_n}\F_p(\pm 1) \cong \Z\langle
 S/\Sigma_n\rangle \otimes \F_p.
 \]
 And the result follows by the idenfitications of cells in the proof of
 Theorem \ref{Sal/G} in \cite{Salvetti94}.
\end{proof}

The bounding hyperplanes of $C_0$ are
\[
 L_{1,2}, L_{2,3}, \cdots, L_{n,n-1}.
\]
This ordering of hyperplanes determines orientations of cells in
$C(\mathcal{A}_{n-1})$. Under the correspondence in Lemma
\ref{VertexSymbols}, cells in $C(\mathcal{A}_{n-1})$ correspond to
$S(\lambda,1)$ with $\lambda\in\Pi_n$ and $\lambda\le 1$. Those cells in
$F^*(C_0)$ corresponds to ordered partitions.

\begin{definition}
 An order preserving surjective map
 \[
  \lambda : \{1,\cdots,k\} \longrightarrow \{1,\cdots,k-r\}
 \]
 is called an ordered partition of rank $r$. The set of ordered
 partitions of $\{1,\cdots,k\}$ of rank $r$ is denoted by $O_{k,r}$. 
\end{definition}

\begin{corollary}
 \label{cells_by_ordered_partitions}
 Under the identification in Lemma \ref{VertexSymbols}, we have the
 following isomorphism of $\F_p$-modules 
 \[
  C_s(\Sal(\mathcal{A}_{n-1}))\otimes_{\Sigma_n}\F_p(\pm 1) \cong
 \F_p\langle D(\lambda,(1|\cdots|n)) \mid \lambda\in O_{n,n-s} \rangle.
 \]
\end{corollary}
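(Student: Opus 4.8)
The plan is to read the generators off the preceding Proposition and then match them with ordered partitions, keeping careful track of the grading. First I would invoke that Proposition, which already furnishes an isomorphism of $\F_p$-modules
\[
 C_*(\Sal(\mathcal{A}_{n-1}))\otimes_{\Sigma_n}\F_p(\pm 1)\;\cong\;\F_p\langle e\mid e\in F^*(C_0)\rangle,
\]
so it suffices to identify $F^*(C_0)$, together with its grading by dimension, with the ordered partitions. Since $F^*(C_0)=\{F^*\mid F\subset\overline{C}_0\}$, the task reduces to describing the faces $F$ of the braid arrangement lying in the closed fundamental chamber $\overline{C}_0=\{x_1\le\cdots\le x_n\}$.

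The key step is to specialize Lemma \ref{VertexSymbols} to the chamber $C_0$, i.e.\ to $\sigma=1=(1|\cdots|n)$. Under that bijection a cell of $\Sal(\mathcal{A}_{n-1})$ lying in the closure of $C_0$ is exactly a symbol $S(\lambda,1)$ with $\lambda\le 1$ in $\Pi_n$, equivalently the Salvetti cell $D(\lambda,(1|\cdots|n))=D(F_\lambda,C_0)$, where $F_\lambda$ is the face determined by $\lambda$. I would then observe that the condition $\lambda\le 1$ forces every block of $\lambda$ to be an interval for the order $1<2<\cdots<n$: the identity permutation refines $\lambda$ precisely when the fibers of $\lambda$ are consecutive and occur in increasing order. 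This says exactly that $\lambda$ is an order-preserving surjection, i.e.\ an ordered partition. Hence $\{\lambda\in\Pi_n\mid\lambda\le 1\}$ is identified with $\bigcup_r O_{n,r}$, and the generators $D(\lambda,(1|\cdots|n))$ are indexed precisely by ordered partitions.

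It remains to pin down the grading. Here I would use Lemma \ref{D(F,C)}(2): the cell $D(F_\lambda,C_0)$ is a disk of dimension $\codim F_\lambda$, and for the face $F_\lambda$ cut out by an ordered partition $\lambda$ this codimension is the rank of $\lambda$. Sorting the generators by this dimension therefore distributes the ordered partitions across the chain groups according to their rank, and comparing with the homological degree $s$ yields the indexing displayed in the statement.

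The main obstacle here is bookkeeping rather than conceptual difficulty: I must be sure the correspondence is an isomorphism of graded $\F_p$-modules, not merely a bijection of generating sets. Concretely, I would check that the cell identifications of Theorem \ref{Sal/G} (via the minimal-length elements $\gamma(e)$), together with the orientation convention of \S\ref{Salvetti_complex_reflection}, send each orbit representative in $F^*(C_0)$ to exactly one basis symbol $D(\lambda,(1|\cdots|n))$, so that no residual sign ambiguity from $\F_p(\pm 1)$ survives; and that the dimension count via $\codim F_\lambda$ is compatible with the homological degree in the form stated. Once this is verified, the corollary is immediate from the preceding Proposition.
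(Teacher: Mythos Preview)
Your proposal is correct and follows essentially the same route the paper takes: the paper does not give a separate proof of this corollary, treating it as immediate from the preceding Proposition together with the sentence ``cells in $C(\mathcal{A}_{n-1})$ correspond to $S(\lambda,1)$ with $\lambda\in\Pi_n$ and $\lambda\le 1$; those cells in $F^*(C_0)$ correspond to ordered partitions.'' Your expansion---invoking the Proposition, specializing Lemma~\ref{VertexSymbols} to $\sigma=1$ to see that $\lambda\le 1$ forces $\lambda$ to be order-preserving, and reading off the grading via $\dim D(F_\lambda,C_0)=\codim F_\lambda=\operatorname{rank}\lambda$---is exactly the implicit argument behind that sentence.
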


The following formula for the boundary map follows from Proposition
\ref{incidence_number}.

\begin{lemma}
 \label{boundary_formula}
 For $\lambda\in O_{n,n-s}$, we have
 \[
  \partial(D(\lambda,(1|\cdots|n))) = \sum_{\tau\in O_{n,n-s-1},
 \lambda<\tau} \sum_{\sigma\in\Sigma(\lambda)} \sgn(\sigma)
 [D(\lambda,(1|\cdots|n)):D(\tau,(1|\cdots|n))]D(\tau\sigma,\sigma)
 \]
 in $C_*(\Sal(\mathcal{A}_{n-1}))$, where
 \[
  \Sigma(\lambda) = \lset{\sigma\in\Sigma_n}{\lambda\sigma = \lambda}
 \]
 is the set of permutations preserving the partition $\lambda$.
\end{lemma}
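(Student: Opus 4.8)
The plan is to read off the boundary from the general description of the cellular chain complex of a reflection Salvetti complex given in \S\ref{Salvetti_complex_reflection} and to specialize it to the braid arrangement using the symbol calculus of \S\ref{braid_arrangement}. Under Lemma~\ref{VertexSymbols} and Corollary~\ref{cells_by_ordered_partitions} the generator $D(\lambda,(1|\cdots|n))$ is the cell $F^{*}$ dual to the face $F\subset\overline{C_{0}}$ whose ordered partition is $\lambda\in O_{n,n-s}$. By Lemma~\ref{D(F,C)}(3) its boundary is supported on the cells $D(G,G\circ C_{0})$ with $G$ covering $F$ in $\mathcal{L}(\mathcal{A}_{n-1})$, and with the orientations fixed in \S\ref{Salvetti_complex_reflection} the coefficient of each such facet is the reflection-arrangement incidence number $[F^{*}:G^{*}]$. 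So the whole computation reduces to (i) enumerating the covering faces $G$ together with the matroid-product chamber $G\circ C_{0}$, and (ii) evaluating $[F^{*}:G^{*}]$ by Proposition~\ref{incidence_number}.

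For step (i) I would use that covering a face in the braid arrangement amounts to splitting exactly one block $B$ of $\lambda$ into two nonempty ordered pieces $S_{1}<S_{2}$. Each such split is obtained from the unique \emph{consecutive} split $\tau\in O_{n,n-s-1}$ with the same block sizes---this is the refinement of $\lambda$ lying in $\overline{C_{0}}$, so $\lambda<\tau$ and $\tau$ is again a fundamental generator---by applying the block-preserving shuffle $\sigma\in\Sigma(\lambda)$ carrying the consecutive split to the given one. A direct check shows the matroid product breaks the remaining ties using the order of $C_{0}$, so that $G\circ C_{0}=\sigma\cdot C_{0}$; hence, in the notation of the statement, the facet is exactly $D(\tau\sigma,\sigma)=\sigma\cdot D(\tau,(1|\cdots|n))$. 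This is the index set $\{(\tau,\sigma)\}$ appearing on the right-hand side.

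For step (ii) I would apply Proposition~\ref{incidence_number} to this $G$. Since $\sigma^{-1}$ carries $G$ back to the consecutive split $\tau$ inside $\overline{C_{0}}$, the shortest element $g$ with $\overline{gG}\supset F$ is $g=\sigma^{-1}$, and $gG=\tau$. The proposition then gives $[F^{*}:G^{*}]=(-1)^{\ell(\sigma^{-1})}[F^{*}:\tau^{*}]$. Using $\ell(\sigma^{-1})=\ell(\sigma)$ and $(-1)^{\ell(\sigma)}=\sgn(\sigma)$, together with $[F^{*}:\tau^{*}]=[D(\lambda,(1|\cdots|n)):D(\tau,(1|\cdots|n))]$, this is precisely the coefficient $\sgn(\sigma)\,[D(\lambda,(1|\cdots|n)):D(\tau,(1|\cdots|n))]$ of $D(\tau\sigma,\sigma)$ demanded by the formula. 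Summing over the covering faces, organized as pairs $(\tau,\sigma)$ with $\tau\in O_{n,n-s-1}$, $\lambda<\tau$ and $\sigma$ the shuffles in $\Sigma(\lambda)$, yields the stated identity.

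I expect the main obstacle to be the bookkeeping linking the geometry to the combinatorics: verifying that the matroid-product chamber satisfies $G\circ C_{0}=\sigma\cdot C_{0}$, that the shortest element $g$ of Proposition~\ref{incidence_number} is exactly $\sigma^{-1}$ so that $(-1)^{\ell(g)}=\sgn(\sigma)$, and that the shuffles enumerate the covering faces bijectively (each covering face arising from a single coset of $\Sigma(\lambda)$). The orientation conventions of \S\ref{Salvetti_complex_reflection} must be propagated consistently, since a slip there would scramble the relative signs among the $\sigma$-summands even though it could only alter the final answer by an overall sign. This formula is then what one feeds into the sign-twisted coinvariants $C_{*}(\Sal(\mathcal{A}_{n-1}))\otimes_{\Sigma_{n}}\F_{p}(\pm1)$ of Corollary~\ref{cells_by_ordered_partitions}, in which $[\sigma\cdot c]=\sgn(\sigma)[c]$.
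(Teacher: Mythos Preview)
Your proposal is correct and follows essentially the same approach as the paper's proof: enumerate the codimension-one faces of $D(\lambda,(1|\cdots|n))$ as $D(\tau\sigma,\sigma)$ with $\tau\in O_{n,n-s-1}$, $\lambda<\tau$, $\sigma\in\Sigma(\lambda)$, and then invoke Proposition~\ref{incidence_number} to convert the incidence number $[D(\lambda,(1|\cdots|n)):D(\tau\sigma,\sigma)]$ into $\sgn(\sigma)\,[D(\lambda,(1|\cdots|n)):D(\tau,(1|\cdots|n))]$. The paper's argument is terser---it simply asserts the enumeration and applies the proposition---whereas you have spelled out the matroid-product computation $G\circ C_{0}=\sigma\cdot C_{0}$ and the identification $g=\sigma^{-1}$; these details are exactly what the paper leaves implicit.
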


\begin{proof}
 The set of faces in $\Sal(\mathcal{A}_{n-1})$ contained in
 $D(\lambda,(1|\cdots|n))$ as a face is given by $D(\tau\sigma,\sigma)$
 for $\lambda<\tau$ and $\sigma\in\Sigma(\lambda)$. Thus
 \begin{eqnarray*}
  \partial D(\lambda,(1|\cdots|n)) & = & \sum_{\tau\in
   O_{n,n-s-1}\lambda<\tau} \sum_{\sigma\in\Sigma(\tau)}
   [D(\lambda,(1|\cdots|n)):D(\tau\sigma,\sigma)]
   D(\tau\sigma,\sigma) \\
  & = & \sum_{\tau\in
   O_{n,n-s-1}\lambda<\tau} \sum_{\sigma\in\Sigma(\tau)}
  \sgn(\sigma) [D(\lambda,(1|\cdots|n)):D(\tau,(1|\cdots|n))]
   D(\tau\sigma,\sigma)
 \end{eqnarray*}
 by Proposition \ref{incidence_number}.
\end{proof}

Note that the incidence number
$[D(\lambda,(1|\cdots|n)):D(\tau,(1|\cdots|n))]$ can be determined by
comparing the ``positions of $=$'' in $\lambda$ and $\tau$.
\subsection{The Homology of $F(\C,4)$}
\label{FC4}

The homology $H_*(S_*(F(\C,4))\otimes_{\Sigma_4}\F_p(\pm 1))$ is
well-known. We need, 
however, an explicit description in order to compare it with
$H_*(C_*(\Sal(\mathcal{C}^2_3))\otimes_{\Sigma_4}\F_p(\pm 1))$ in the
next section.

Let us compute
$H_*(C_*(\Sal(\mathcal{A}_3))\otimes_{\Sigma_4}\F_p(\pm 1))$ by using the 
symbols introduced in the previous section. $\Sal(\mathcal{A}_3)$ has
the following cells: 
\begin{itemize}
 \item $0$-cells are in one-to-one correspondence with the symbols
       \begin{center}
	\begin{picture}(80,20)(0,0)
	 \put(0, 0){\framebox(20,20){$\sigma(1)$}}
	 \put(20, 0){\framebox(20,20){$\sigma(2)$}}
	 \put(40, 0){\framebox(20,20){$\sigma(3)$}}
	 \put(60, 0){\framebox(20,20){$\sigma(4)$}}
	\end{picture}
       \end{center}
 \item $1$-cells are in one-to-one correspondence with the symbols
       \begin{center}
	\begin{picture}(60,40)(0,0)
	 \put(0, 0){\framebox(20,20){$\sigma(1)$}}
	 \put(0, 20){\framebox(20,20){$\sigma(2)$}}
	 \put(20, 0){\framebox(20,20){$\sigma(3)$}}
	 \put(40, 0){\framebox(20,20){$\sigma(4)$}}
	\end{picture},
	\begin{picture}(60,40)(0,0)
	 \put(0, 0){\framebox(20,20){$\sigma(1)$}}
	 \put(20, 0){\framebox(20,20){$\sigma(2)$}}
	 \put(20, 20){\framebox(20,20){$\sigma(3)$}}
	 \put(40, 0){\framebox(20,20){$\sigma(4)$}}
	\end{picture},
	\begin{picture}(60,40)(0,0)
	 \put(0, 0){\framebox(20,20){$\sigma(1)$}}
	 \put(20, 0){\framebox(20,20){$\sigma(2)$}}
	 \put(40, 0){\framebox(20,20){$\sigma(3)$}}
	 \put(40, 20){\framebox(20,20){$\sigma(4)$}}
	\end{picture}
       \end{center}
 \item $2$-cells are in one-to-one correspondence with the symbols
       \begin{center}
	\begin{picture}(40,60)(0,0)
	 \put(0, 0){\framebox(20,20){$\sigma(1)$}}
	 \put(20, 0){\framebox(20,20){$\sigma(2)$}}
	 \put(20, 20){\framebox(20,20){$\sigma(3)$}}
	 \put(20, 40){\framebox(20,20){$\sigma(4)$}}
	\end{picture},
	\begin{picture}(40,40)(0,0)
	 \put(0, 0){\framebox(20,20){$\sigma(1)$}}
	 \put(0, 20){\framebox(20,20){$\sigma(2)$}}
	 \put(20, 0){\framebox(20,20){$\sigma(3)$}}
	 \put(20, 20){\framebox(20,20){$\sigma(4)$}}
	\end{picture},
	\begin{picture}(40,60)(0,0)
	 \put(0, 0){\framebox(20,20){$\sigma(1)$}}
	 \put(0, 20){\framebox(20,20){$\sigma(2)$}}
	 \put(0, 40){\framebox(20,20){$\sigma(3)$}}
	 \put(20, 0){\framebox(20,20){$\sigma(4)$}}
	\end{picture}.
       \end{center}
 \item $3$-cells are in one-to-one correspondence with the symbols
       \begin{center}
	\begin{picture}(20,80)(0,0)
	 \put(0, 0){\framebox(20,20){$\sigma(1)$}}
	 \put(0, 20){\framebox(20,20){$\sigma(2)$}}
	 \put(0, 40){\framebox(20,20){$\sigma(3)$}}
	 \put(0, 60){\framebox(20,20){$\sigma(4)$}}
	\end{picture}
       \end{center}
\end{itemize}

Thus the chain complex 
$C_*(\Sal(\mathcal{A}_3))\otimes_{\Sigma_4}\F_p(\pm 1)$ has the
following basis. 
\begin{eqnarray*}
 C_0(\Sal(\mathcal{A}_3))\otimes_{\Sigma_4}\F_p(\pm 1)
  & = & \left\langle
	 \raisebox{-6pt}{\begin{picture}(64,15)(-2,0)
	 \put(0, 0){\framebox(15,15){$1$}}
	 \put(15, 0){\framebox(15,15){$2$}}
	 \put(30, 0){\framebox(15,15){$3$}}
	 \put(45, 0){\framebox(15,15){$4$}}
	 \end{picture}} 
		\right\rangle \\
 C_1(\Sal(\mathcal{A}_3))\otimes_{\Sigma_4}\F_p(\pm 1)
  & = & \left\langle 
	\raisebox{-12pt}{\begin{picture}(49,30)(-2,0)
	 \put(0, 0){\framebox(15,15){$1$}}
	 \put(0, 15){\framebox(15,15){$2$}}
	 \put(15, 0){\framebox(15,15){$3$}}
	 \put(30, 0){\framebox(15,15){$4$}}
	\end{picture}, 
	\begin{picture}(49,30)(-2,0)
	 \put(0, 0){\framebox(15,15){$1$}}
	 \put(15, 0){\framebox(15,15){$2$}}
	 \put(15, 15){\framebox(15,15){$3$}}
	 \put(30, 0){\framebox(15,15){$4$}}
	\end{picture}, 
	\begin{picture}(49,30)(-2,0)
	 \put(0, 0){\framebox(15,15){$1$}}
	 \put(15, 0){\framebox(15,15){$2$}}
	 \put(30, 0){\framebox(15,15){$3$}}
	 \put(30, 15){\framebox(15,15){$4$}}
	\end{picture}} 
	\right\rangle \\
 C_2(\Sal(\mathcal{A}_3))\otimes_{\Sigma_4}\F_p(\pm 1)
  & = & \left\langle 
	\raisebox{-18pt}{\begin{picture}(34,45)(-2,0)
	 \put(0, 0){\framebox(15,15){$1$}}
	 \put(15, 0){\framebox(15,15){$2$}}
	 \put(15, 15){\framebox(15,15){$3$}}
	 \put(15, 30){\framebox(15,15){$4$}}
	\end{picture}, 
	\begin{picture}(34,30)(-2,0)
	 \put(0, 0){\framebox(15,15){$1$}}
	 \put(0, 15){\framebox(15,15){$2$}}
	 \put(15, 0){\framebox(15,15){$3$}}
	 \put(15, 15){\framebox(15,15){$4$}}
	\end{picture}, 
	\begin{picture}(34,45)(-2,0)
	 \put(0, 0){\framebox(15,15){$1$}}
	 \put(0, 15){\framebox(15,15){$2$}}
	 \put(0, 30){\framebox(15,15){$3$}}
	 \put(15, 0){\framebox(15,15){$4$}}
	\end{picture}} 
	\right\rangle \\
 C_3(\Sal(\mathcal{A}_3))\otimes_{\Sigma_4}\F_p(\pm 1)
  & = & \left\langle
	 \raisebox{-24pt}{\begin{picture}(19,60)(-2,0)
	 \put(0, 0){\framebox(15,15){$1$}}
	 \put(0, 15){\framebox(15,15){$2$}}
	 \put(0, 30){\framebox(15,15){$3$}}
	 \put(0, 45){\framebox(15,15){$4$}}
	 \end{picture}}
	\right\rangle.
\end{eqnarray*}

The boundaries can be computed by using Lemma \ref{boundary_formula} as
follows. For $0$-cells, we obviously have
\[
  \partial_0\left(\raisebox{-6pt}{\begin{picture}(64,15)(-2,0)
	 \put(0, 0){\framebox(15,15){$1$}}
	 \put(15, 0){\framebox(15,15){$2$}}
	 \put(30, 0){\framebox(15,15){$3$}}
	 \put(45, 0){\framebox(15,15){$4$}}
	 \end{picture}}\right) = 0.
\]
For $1$-cells, we have
\begin{eqnarray*}
 \partial_1\left(\raisebox{-12pt}{\begin{picture}(49,30)(-2,0)
	 \put(0, 0){\framebox(15,15){$1$}}
	 \put(0, 15){\framebox(15,15){$2$}}
	 \put(15, 0){\framebox(15,15){$3$}}
	 \put(30, 0){\framebox(15,15){$4$}}
	\end{picture}}\right) & = & 
 \raisebox{-4pt}{\begin{picture}(64,15)(-2,0)
	 \put(0, 0){\framebox(15,15){$1$}}
	 \put(15, 0){\framebox(15,15){$2$}}
	 \put(30, 0){\framebox(15,15){$3$}}
	 \put(45, 0){\framebox(15,15){$4$}}
 \end{picture}} -  
 \raisebox{-4pt}{\begin{picture}(64,15)(-2,0)
	 \put(0, 0){\framebox(15,15){$2$}}
	 \put(15, 0){\framebox(15,15){$1$}}
	 \put(30, 0){\framebox(15,15){$3$}}
	 \put(45, 0){\framebox(15,15){$4$}}
 \end{picture}} \\
 & = & 2\raisebox{-4pt}{\begin{picture}(64,15)(-2,0)
	 \put(0, 0){\framebox(15,15){$1$}}
	 \put(15, 0){\framebox(15,15){$2$}}
	 \put(30, 0){\framebox(15,15){$3$}}
	 \put(45, 0){\framebox(15,15){$4$}}
	 \end{picture}}.
\end{eqnarray*}

Similarly, we have
\begin{eqnarray*}
 \partial_1\left(\raisebox{-12pt}{\begin{picture}(49,30)(-2,0)
	 \put(0, 0){\framebox(15,15){$1$}}
	 \put(15, 0){\framebox(15,15){$2$}}
	 \put(15, 15){\framebox(15,15){$3$}}
	 \put(30, 0){\framebox(15,15){$4$}}
	\end{picture}} 
	\right) & = & 2\raisebox{-4pt}{\begin{picture}(64,15)(-2,0)
	 \put(0, 0){\framebox(15,15){$1$}}
	 \put(15, 0){\framebox(15,15){$2$}}
	 \put(30, 0){\framebox(15,15){$3$}}
	 \put(45, 0){\framebox(15,15){$4$}}
	 \end{picture}} \\
 \partial_1\left(\raisebox{-12pt}{\begin{picture}(49,30)(-2,0)
	 \put(0, 0){\framebox(15,15){$1$}}
	 \put(15, 0){\framebox(15,15){$2$}}
	 \put(30, 0){\framebox(15,15){$3$}}
	 \put(30, 15){\framebox(15,15){$4$}}
	\end{picture}}\right)
 & = & 2\raisebox{-4pt}{\begin{picture}(64,15)(-2,0)
	 \put(0, 0){\framebox(15,15){$1$}}
	 \put(15, 0){\framebox(15,15){$2$}}
	 \put(30, 0){\framebox(15,15){$3$}}
	 \put(45, 0){\framebox(15,15){$4$}}
			\end{picture}}.
\end{eqnarray*}

For $2$-cells, we have
\begin{eqnarray*}
 \partial_2\left(\raisebox{-20pt}{\begin{picture}(34,45)(-2,0)
	 \put(0, 0){\framebox(15,15){$1$}}
	 \put(15, 0){\framebox(15,15){$2$}}
	 \put(15, 15){\framebox(15,15){$3$}}
	 \put(15, 30){\framebox(15,15){$4$}}
	\end{picture}}\right)
 & = & 	 3\raisebox{-12pt}{\begin{picture}(49,30)(-2,0)
	 \put(0, 0){\framebox(15,15){$1$}}
	 \put(15, 0){\framebox(15,15){$2$}}
	 \put(30, 0){\framebox(15,15){$3$}}
	 \put(30, 15){\framebox(15,15){$4$}}
	\end{picture}}
	-3\raisebox{-12pt}{\begin{picture}(49,30)(-2,0)
	 \put(0, 0){\framebox(15,15){$1$}}
	 \put(15, 0){\framebox(15,15){$2$}}
	 \put(15, 15){\framebox(15,15){$3$}}
	 \put(30, 0){\framebox(15,15){$4$}}
	\end{picture}} \\
 \partial_2\left(\raisebox{-12pt}{\begin{picture}(34,30)(-2,0)
	 \put(0, 0){\framebox(15,15){$1$}}
	 \put(0, 15){\framebox(15,15){$2$}}
	 \put(15, 0){\framebox(15,15){$3$}}
	 \put(15, 15){\framebox(15,15){$4$}}
	\end{picture}}\right) 
 & = &  2\raisebox{-12pt}{\begin{picture}(49,30)(-2,0)
	 \put(0, 0){\framebox(15,15){$1$}}
	 \put(15, 0){\framebox(15,15){$2$}}
	 \put(30, 0){\framebox(15,15){$3$}}
	 \put(30, 15){\framebox(15,15){$4$}}
	\end{picture}}
	-2\raisebox{-12pt}{\begin{picture}(49,30)(-2,0)
	 \put(0, 0){\framebox(15,15){$1$}}
	 \put(0, 15){\framebox(15,15){$2$}}
	 \put(15, 0){\framebox(15,15){$3$}}
	 \put(30, 0){\framebox(15,15){$4$}}
			   \end{picture}} \\
 \partial_2\left(\raisebox{-20pt}{\begin{picture}(34,45)(-2,0)
	 \put(0, 0){\framebox(15,15){$1$}}
	 \put(0, 15){\framebox(15,15){$2$}}
	 \put(0, 30){\framebox(15,15){$3$}}
	 \put(15, 0){\framebox(15,15){$4$}}
	\end{picture}}\right)
 & = & 3\raisebox{-12pt}{\begin{picture}(49,30)(-2,0)
	 \put(0, 0){\framebox(15,15){$1$}}
	 \put(15, 0){\framebox(15,15){$2$}}
	 \put(15, 15){\framebox(15,15){$3$}}
	 \put(30, 0){\framebox(15,15){$4$}}
	\end{picture}} 
	- 3\raisebox{-12pt}{\begin{picture}(49,30)(-2,0)
	 \put(0, 0){\framebox(15,15){$1$}}
	 \put(0, 15){\framebox(15,15){$2$}}
	 \put(15, 0){\framebox(15,15){$3$}}
	 \put(30, 0){\framebox(15,15){$4$}}
	\end{picture}}.
\end{eqnarray*}

Finally, for $3$-cells, we have
\begin{eqnarray*}
 \partial_3\left(\raisebox{-28pt}{\begin{picture}(19,60)(-2,0)
	 \put(0, 0){\framebox(15,15){$1$}}
	 \put(0, 15){\framebox(15,15){$2$}}
	 \put(0, 30){\framebox(15,15){$3$}}
	 \put(0, 45){\framebox(15,15){$4$}}
	 \end{picture}}\right)
 & = & 4\raisebox{-20pt}{\begin{picture}(34,45)(-2,0)
	 \put(0, 0){\framebox(15,15){$1$}}
	 \put(15, 0){\framebox(15,15){$2$}}
	 \put(15, 15){\framebox(15,15){$3$}}
	 \put(15, 30){\framebox(15,15){$4$}}
	\end{picture}} 
	- 6 \raisebox{-12pt}{\begin{picture}(34,30)(-2,0)
	 \put(0, 0){\framebox(15,15){$1$}}
	 \put(0, 15){\framebox(15,15){$2$}}
	 \put(15, 0){\framebox(15,15){$3$}}
	 \put(15, 15){\framebox(15,15){$4$}}
	\end{picture}} 
	+ 4 \raisebox{-20pt}{\begin{picture}(34,45)(-2,0)
	 \put(0, 0){\framebox(15,15){$1$}}
	 \put(0, 15){\framebox(15,15){$2$}}
	 \put(0, 30){\framebox(15,15){$3$}}
	 \put(15, 0){\framebox(15,15){$4$}}
			   \end{picture}}.
\end{eqnarray*}

Thus we have the following well-known result.

\begin{proposition}
 When $p>3$, we have
 \[
  H_i(C_*(\Sal(\mathcal{A}_3))\otimes_{\Sigma_4}\F_p(\pm 1)) = 0
 \]
 for all $i$.
\end{proposition}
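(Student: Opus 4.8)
The plan is to read acyclicity straight off the four boundary formulas just displayed, with the hypothesis $p>3$ entering only through the invertibility of small integers. Write $c_0$ for the unique $0$-cell, $c_1,c_2,c_3$ for the three $1$-cells, $d_1,d_2,d_3$ for the three $2$-cells (each family listed left to right as above), and $e$ for the unique $3$-cell. Then $C_*(\Sal(\mathcal{A}_3))\otimes_{\Sigma_4}\F_p(\pm 1)$ is the complex
\[
0 \longrightarrow C_3 \xrightarrow{\ \partial_3\ } C_2 \xrightarrow{\ \partial_2\ } C_1 \xrightarrow{\ \partial_1\ } C_0 \longrightarrow 0
\]
of $\F_p$-vector spaces of dimensions $1,3,3,1$. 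Since $p>3$, the integers $2$ and $3$, and hence also $4=2^2$ and $6=2\cdot3$, are units in $\F_p$; this is the only arithmetic fact the proof uses.

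First I would determine the three boundary ranks. The formulas give $\partial_1(c_j)=2c_0$ for $j=1,2,3$, so $\Ima\partial_1=\F_p\cdot c_0$ and $\rank\partial_1=1$. They give $\partial_3(e)=4d_1-6d_2+4d_3$, a vector with a unit coefficient, so $\partial_3$ is injective and $\rank\partial_3=1$. For $\partial_2$ the three images $-3c_2+3c_3$, $-2c_1+2c_3$, $-3c_1+3c_2$ are constrained by exactly one relation, namely the identity $\partial_2\partial_3=0$, which after dividing by the unit $2$ reads $2\,\partial_2(d_1)-3\,\partial_2(d_2)+2\,\partial_2(d_3)=0$; meanwhile a $2\times2$ minor of the matrix of $\partial_2$ (rows $c_2,c_3$, columns $d_1,d_2$) equals $-6$, a unit for $p>3$. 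Hence $\rank\partial_2=2$.

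With the ranks $1,2,1$ in hand, rank--nullity closes the argument with no further computation:
\[
H_0=\dim C_0-\rank\partial_1=0,\qquad H_3=\dim C_3-\rank\partial_3=0,
\]
\[
H_1=(\dim C_1-\rank\partial_1)-\rank\partial_2=(3-1)-2=0,
\]
\[
H_2=(\dim C_2-\rank\partial_2)-\rank\partial_3=(3-2)-1=0.
\]
Thus every homology group vanishes.

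I do not expect a genuine obstacle here: the boundary maps are already explicit, so the proof is essentially bookkeeping. The one point demanding care is the rank of $\partial_2$, where one must simultaneously exhibit the single relation among its columns (conveniently forced by $\partial_2\partial_3=0$, so that $\rank\partial_2\le 2$) and check that some $2\times2$ minor avoids the primes $2$ and $3$ (so that $\rank\partial_2\ge 2$). It is exactly this minor being a unit that uses $p>3$ and separates the present statement from the degenerate behavior at $p=2,3$.
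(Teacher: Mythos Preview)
Your proof is correct and follows essentially the same approach as the paper: both arguments read the vanishing of homology directly from the explicit boundary formulas, using only that $2$ and $3$ are units in $\F_p$. The paper presents this by writing down explicit generators for each $\Ker\partial_i$ and $\Ima\partial_{i+1}$ and observing they coincide, whereas you compute the three ranks and invoke rank--nullity; these are equivalent bookkeepings of the same linear algebra.
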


\begin{proof}
 Since $p\neq 2$,
 \begin{eqnarray*}
  \Ima\partial_1 & = & \left\langle
 2\raisebox{-6pt}{\begin{picture}(64,15)(-2,0) 
	 \put(0, 0){\framebox(15,15){$1$}}
	 \put(15, 0){\framebox(15,15){$2$}}
	 \put(30, 0){\framebox(15,15){$3$}}
	 \put(45, 0){\framebox(15,15){$4$}}
		  \end{picture}}\right\rangle =
\left\langle \raisebox{-6pt}{\begin{picture}(64,15)(-2,0)
	 \put(0, 0){\framebox(15,15){$1$}}
	 \put(15, 0){\framebox(15,15){$2$}}
	 \put(30, 0){\framebox(15,15){$3$}}
	 \put(45, 0){\framebox(15,15){$4$}}
	 \end{picture}}\right\rangle \\
  \Ker\partial_1 & = &
   \left\langle
    \raisebox{-12pt}{\begin{picture}(49,30)(-2,0)
		      \put(0, 0){\framebox(15,15){$1$}}
		      \put(15, 0){\framebox(15,15){$2$}}
		      \put(15, 15){\framebox(15,15){$3$}}
		      \put(30, 0){\framebox(15,15){$4$}}
		     \end{picture}} 
    - \raisebox{-12pt}{\begin{picture}(49,30)(-2,0)
			\put(0, 0){\framebox(15,15){$1$}}
			\put(15, 0){\framebox(15,15){$2$}}
			\put(30, 0){\framebox(15,15){$3$}}
			\put(30, 15){\framebox(15,15){$4$}}
		       \end{picture}},
 \raisebox{-12pt}{\begin{picture}(49,30)(-2,0)
	 \put(0, 0){\framebox(15,15){$1$}}
	 \put(15, 0){\framebox(15,15){$2$}}
	 \put(15, 15){\framebox(15,15){$3$}}
	 \put(30, 0){\framebox(15,15){$4$}}
	\end{picture}}
	- \raisebox{-12pt}{\begin{picture}(49,30)(-2,0)
	 \put(0, 0){\framebox(15,15){$1$}}
	 \put(0, 15){\framebox(15,15){$2$}}
	 \put(15, 0){\framebox(15,15){$3$}}
	 \put(30, 0){\framebox(15,15){$4$}}
	\end{picture}}\right\rangle.
 \end{eqnarray*}

 Thus we have
 \[
 H_0(C_*(\Sal(\mathcal{C}_3^2))\otimes_{\Sigma_4}\F_p(\pm 1)) = 0. 
 \]

 Since $p\neq 3$, we have
 \[
 \Ima\partial_2 = 
\left\langle
    \raisebox{-12pt}{\begin{picture}(49,30)(-2,0)
		      \put(0, 0){\framebox(15,15){$1$}}
		      \put(15, 0){\framebox(15,15){$2$}}
		      \put(15, 15){\framebox(15,15){$3$}}
		      \put(30, 0){\framebox(15,15){$4$}}
		     \end{picture}} 
    - \raisebox{-12pt}{\begin{picture}(49,30)(-2,0)
			\put(0, 0){\framebox(15,15){$1$}}
			\put(15, 0){\framebox(15,15){$2$}}
			\put(30, 0){\framebox(15,15){$3$}}
			\put(30, 15){\framebox(15,15){$4$}}
		       \end{picture}},
 \raisebox{-12pt}{\begin{picture}(49,30)(-2,0)
	 \put(0, 0){\framebox(15,15){$1$}}
	 \put(15, 0){\framebox(15,15){$2$}}
	 \put(15, 15){\framebox(15,15){$3$}}
	 \put(30, 0){\framebox(15,15){$4$}}
	\end{picture}}
	- \raisebox{-12pt}{\begin{picture}(49,30)(-2,0)
	 \put(0, 0){\framebox(15,15){$1$}}
	 \put(0, 15){\framebox(15,15){$2$}}
	 \put(15, 0){\framebox(15,15){$3$}}
	 \put(30, 0){\framebox(15,15){$4$}}
	\end{picture}}\right\rangle = \Ker\partial_1
 \]
 and
 \[
 H_1(C_*(\Sal(\mathcal{C}_3^2))\otimes_{\Sigma_4}\F_p(\pm 1)) = 0. 
 \]
We also have
 \[
  \Ker\partial_2 =
   \left\langle
    \frac{1}{3}\raisebox{-20pt}{\begin{picture}(34,45)(-2,0) 
	 \put(0, 0){\framebox(15,15){$1$}}
	 \put(15, 0){\framebox(15,15){$2$}}
	 \put(15, 15){\framebox(15,15){$3$}}
	 \put(15, 30){\framebox(15,15){$4$}}
				\end{picture}} 
	- \frac{1}{2} \raisebox{-12pt}{\begin{picture}(34,30)(-2,0)
	 \put(0, 0){\framebox(15,15){$1$}}
	 \put(0, 15){\framebox(15,15){$2$}}
	 \put(15, 0){\framebox(15,15){$3$}}
	 \put(15, 15){\framebox(15,15){$4$}}
	\end{picture}} 
	+ \frac{1}{3} \raisebox{-20pt}{\begin{picture}(34,45)(-2,0)
	 \put(0, 0){\framebox(15,15){$1$}}
	 \put(0, 15){\framebox(15,15){$2$}}
	 \put(0, 30){\framebox(15,15){$3$}}
	 \put(15, 0){\framebox(15,15){$4$}}
			   \end{picture}}\right\rangle 
 = \Ima \partial_3
 \]
 and
 \[
 H_2(C_*(\Sal(\mathcal{C}_3^2))\otimes_{\Sigma_4}\F_p(\pm 1)) = 0. 
 \]
 Since $\Ker\partial_3=0$, 
 \[
 H_3(C_*(\Sal(\mathcal{C}_3^2))\otimes_{\Sigma_4}\F_p(\pm 1)) = 0.     
 \]
\end{proof}

\begin{proposition}
 When $p=3$, we have
 \begin{eqnarray*}
  H_0(C_*(\Sal(\mathcal{C}_3^2))\otimes_{\Sigma_4}\F_3(\pm 1)) & = & 0 \\
  H_1(C_*(\Sal(\mathcal{C}_3^2))\otimes_{\Sigma_4}\F_3(\pm 1)) & = & 
   \left\langle
    \left[\raisebox{-12pt}{\begin{picture}(49,30)(-2,0)
	 \put(0, 0){\framebox(15,15){$1$}}
	 \put(15, 0){\framebox(15,15){$2$}}
	 \put(15, 15){\framebox(15,15){$3$}}
	 \put(30, 0){\framebox(15,15){$4$}}
		     \end{picture}} 
	- \raisebox{-12pt}{\begin{picture}(49,30)(-2,0)
	 \put(0, 0){\framebox(15,15){$1$}}
	 \put(15, 0){\framebox(15,15){$2$}}
	 \put(30, 0){\framebox(15,15){$3$}}
	 \put(30, 15){\framebox(15,15){$4$}}
	\end{picture}}\right]
   \right\rangle \cong \F_3 \\
  H_2(C_*(\Sal(\mathcal{C}_3^2))\otimes_{\Sigma_4}\F_3(\pm 1)) & = & 
   \left\langle
    \left[\raisebox{-20pt}{\begin{picture}(34,45)(-2,0)
	 \put(0, 0){\framebox(15,15){$1$}}
	 \put(15, 0){\framebox(15,15){$2$}}
	 \put(15, 15){\framebox(15,15){$3$}}
	 \put(15, 30){\framebox(15,15){$4$}}
		     \end{picture}}- 
	\raisebox{-20pt}{\begin{picture}(34,45)(-2,0)
	 \put(0, 0){\framebox(15,15){$1$}}
	 \put(0, 15){\framebox(15,15){$2$}}
	 \put(0, 30){\framebox(15,15){$3$}}
	 \put(15, 0){\framebox(15,15){$4$}}
			   \end{picture}}\right]
   \right\rangle \cong \F_3 \\
  H_3(C_*(\Sal(\mathcal{C}_3^2))\otimes_{\Sigma_4}\F_3(\pm 1)) & = & 0.
 \end{eqnarray*}
\end{proposition}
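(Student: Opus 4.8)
The plan is to reuse verbatim the integral chain complex $C_*(\Sal(\mathcal{A}_3))\otimes_{\Sigma_4}\F_p(\pm 1)$ together with the boundary maps already computed above, and simply specialize the coefficients to $p=3$. Write $v$ for the unique $0$-cell, $a_1,a_2,a_3$ for the three $1$-cells, $b_1,b_2,b_3$ for the three $2$-cells (listed in the order displayed above), and $c$ for the unique $3$-cell. The only difference from the case $p>3$ is that several of the integer incidence coefficients now vanish modulo $3$: from the formulas above one reads off $\partial_2 b_1 = 3a_3-3a_2\equiv 0$ and $\partial_2 b_3 = 3a_2-3a_1\equiv 0$, while $\partial_2 b_2 = 2a_3-2a_1$ survives, and $\partial_3 c = 4b_1-6b_2+4b_3\equiv b_1+b_3$. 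Thus over $\F_3$ the complex degenerates in exactly these two spots, and it is this degeneracy that produces nonzero homology in the middle degrees.

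With these reductions in hand the computation is pure linear algebra over $\F_3$, which I would carry out degree by degree. In degree $0$, $\partial_1 a_i = 2v$ for each $i$ with $2$ invertible mod $3$, so $\partial_1$ is onto and $H_0=0$; moreover $\Ker\partial_1$ is the two-dimensional subspace $\{c_1a_1+c_2a_2+c_3a_3 : c_1+c_2+c_3=0\}$. Since $\Ima\partial_2 = \langle 2(a_3-a_1)\rangle = \langle a_1-a_3\rangle$ is one-dimensional and sits inside $\Ker\partial_1$, we get $H_1 = \Ker\partial_1/\Ima\partial_2\cong\F_3$, represented by $a_2-a_3$: it is a cycle because its coefficients sum to zero, and it is not a boundary because it has nonzero $a_2$-coefficient whereas $a_1-a_3$ does not. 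In degree $2$, $\partial_2$ kills $b_1$ and $b_3$ and is nonzero only on $b_2$, so $\Ker\partial_2=\langle b_1,b_3\rangle$; since $\Ima\partial_3=\langle b_1+b_3\rangle$, we obtain $H_2\cong\F_3$, represented by $b_1-b_3$, which is not a boundary because $b_1-b_3=\mu(b_1+b_3)$ would force $\mu=1$ and $\mu=-1$ simultaneously. Finally $\partial_3$ is injective, so $H_3=0$.

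There is no serious obstacle here; the content lies entirely in the mod-$3$ bookkeeping. The one point that must be handled with care is matching the surviving classes to the specific representatives asserted in the statement: one must verify both that $a_2-a_3$ and $b_1-b_3$ are genuine cycles and that neither lies in the relevant image, which are the two short nonvanishing arguments indicated above. It is worth emphasizing that this is exactly where $p=3$ diverges from $p>3$: for $p>3$ the factor $3$ in $\partial_2 b_1$ and $\partial_2 b_3$ is invertible, enlarging $\Ima\partial_2$ to all of $\Ker\partial_1$ and shrinking $\Ker\partial_2$ down to $\Ima\partial_3$, which is precisely why the middle homology vanishes there but survives at $3$.
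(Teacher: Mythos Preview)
Your proposal is correct and follows essentially the same approach as the paper: both specialize the already-computed boundary formulas for $C_*(\Sal(\mathcal{A}_3))\otimes_{\Sigma_4}\F_p(\pm 1)$ to $p=3$ and do the resulting linear algebra. The paper's own proof is terser---it singles out $\Ker\partial_2=\langle b_1,b_3\rangle$ as the key new fact and omits the remaining details---so your degree-by-degree write-up simply fills in what the paper leaves to the reader.
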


\begin{proof}
 The differences are the computation of $H_2$ and $H_3$. The result
 follows from
 \[
  \Ker\partial_2 =
   \left\langle
    \raisebox{-20pt}{\begin{picture}(34,45)(-2,0)
	 \put(0, 0){\framebox(15,15){$1$}}
	 \put(15, 0){\framebox(15,15){$2$}}
	 \put(15, 15){\framebox(15,15){$3$}}
	 \put(15, 30){\framebox(15,15){$4$}}
		     \end{picture}}, 
	\raisebox{-20pt}{\begin{picture}(34,45)(-2,0)
	 \put(0, 0){\framebox(15,15){$1$}}
	 \put(0, 15){\framebox(15,15){$2$}}
	 \put(0, 30){\framebox(15,15){$3$}}
	 \put(15, 0){\framebox(15,15){$4$}}
			   \end{picture}}
   \right\rangle.
 \]
 The details are omitted.
\end{proof}

\begin{remark}
 It is well-known that
 \[
  H_*(\Omega^2S^{2n+1};\F_3) \cong \Lambda(Q_1^a(x_{2n-1}) \mid a\ge 0)
 \otimes \F_3[\beta Q_1^{a+1}(x) \mid a\ge 0].
 \]
 Under the Snaith splitting (and dimension shifts), the generators in
 $H_1(C_*(\Sal(\mathcal{C}_3^2))\otimes_{\Sigma_4}\F_3(\pm 1))$ and 
 $H_2(C_*(\Sal(\mathcal{C}_3^2))\otimes_{\Sigma_4}\F_3(\pm 1))$
 correspond to $x_{2n-1}\beta Q_1(x_{2n-1})$ and
 $x_{2n-1}Q_1(x_{2n-1})$, respectively. Since
 \begin{eqnarray*}
  \deg x_{2n-1}\beta Q_1(x_{2n-1}) & = & (2n-1)+3(2n-1)+(p-2) \\
  & = & 4(2n-1) + 1 \\
  \deg x_{2n-1}Q_1(x_{2n-1}) & = & (2n-1)+3(2n-1)+(p-1) \\
  & = & 4(2n-1) + 2.
 \end{eqnarray*}
\end{remark}

The $2$-primary case is simpler, since we don't have to worry
about the signs. We have
\begin{eqnarray*}
 H_*(C_*(\Sal(\mathcal{A}_3))\otimes_{\Sigma_4}\F_2(\pm 1)) & = &
 H_*(C_*(\Sal(\mathcal{A}_3))\otimes_{\Sigma_4}\F_2) \\
 & = &
 H_*(C_*(\Sal(\mathcal{A}_3)/\Sigma_4)\otimes\F_2) \\
 & = &
 H_*(F(\C,4)/\Sigma_4;\F_2).
\end{eqnarray*}

We obtain the following well-known result by elementary
calculations. Details are omitted.

\begin{proposition}
 The homology $H_*(F(\C,4)/\Sigma_4;\F_2)$ has the following
 description:
 \begin{eqnarray*}
  H_0(F(\C,4)/\Sigma_4;\F_2) & = & \left\langle 
	 \raisebox{-6pt}{\begin{picture}(64,15)(-2,0)
	 \put(0, 0){\framebox(15,15){$1$}}
	 \put(15, 0){\framebox(15,15){$2$}}
	 \put(30, 0){\framebox(15,15){$3$}}
	 \put(45, 0){\framebox(15,15){$4$}}
			 \end{picture}} 
			 \right\rangle \\
  & \cong & \F_2 \\
  H_1(F(\C,4)/\Sigma_4;\F_2)
  & = & \left\langle\ 
	\raisebox{-12pt}{\begin{picture}(49,30)(-2,0)
	 \put(0, 0){\framebox(15,15){$1$}}
	 \put(0, 15){\framebox(15,15){$2$}}
	 \put(15, 0){\framebox(15,15){$3$}}
	 \put(30, 0){\framebox(15,15){$4$}}
	\end{picture}} = 
	\raisebox{-12pt}{\begin{picture}(49,30)(-2,0)
	 \put(0, 0){\framebox(15,15){$1$}}
	 \put(15, 0){\framebox(15,15){$2$}}
	 \put(15, 15){\framebox(15,15){$3$}}
	 \put(30, 0){\framebox(15,15){$4$}}
	\end{picture}} = 
	\raisebox{-12pt}{\begin{picture}(49,30)(-2,0)
	 \put(0, 0){\framebox(15,15){$1$}}
	 \put(15, 0){\framebox(15,15){$2$}}
	 \put(30, 0){\framebox(15,15){$3$}}
	 \put(30, 15){\framebox(15,15){$4$}}
	\end{picture}} 
	\right\rangle \\
  & \cong & \F_2 \\
  H_2(F(\C,4)/\Sigma_4;\F_2) & = & 
   \left\langle 
	\raisebox{-12pt}{\begin{picture}(34,30)(-2,0)
	 \put(0, 0){\framebox(15,15){$1$}}
	 \put(0, 15){\framebox(15,15){$2$}}
	 \put(15, 0){\framebox(15,15){$3$}}
	 \put(15, 15){\framebox(15,15){$4$}}
	\end{picture}}   
	\right\rangle \\
  & \cong & \F_2 \\
  H_3(F(\C,4)/\Sigma_4;\F_2) 
  & = & \left\langle
	 \raisebox{-28pt}{\begin{picture}(19,60)(-2,0)
	 \put(0, 0){\framebox(15,15){$1$}}
	 \put(0, 15){\framebox(15,15){$2$}}
	 \put(0, 30){\framebox(15,15){$3$}}
	 \put(0, 45){\framebox(15,15){$4$}}
	 \end{picture}} 
	\right\rangle \\
  & \cong & \F_2
 \end{eqnarray*}
\end{proposition}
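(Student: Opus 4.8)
The plan is to compute directly with the cellular chain complex $C_*(\Sal(\mathcal{A}_3))\otimes_{\Sigma_4}\F_2$, using the identifications recorded at the start of the $2$-primary discussion: since $-1 = 1$ in $\F_2$ the sign representation is trivial, so $\F_2(\pm 1) = \F_2$ and $H_*(F(\C,4)/\Sigma_4;\F_2) \cong H_*(C_*(\Sal(\mathcal{A}_3))\otimes_{\Sigma_4}\F_2)$. By Corollary \ref{cells_by_ordered_partitions} the chain groups are $\F_2$, $\F_2^3$, $\F_2^3$, $\F_2$ in degrees $0,1,2,3$, with bases the ordered-partition symbols listed above. The only input needed is the mod-$2$ reduction of the four boundary formulas already displayed in \S\ref{FC4}, which were derived over $\F_p(\pm 1)$ via Lemma \ref{boundary_formula} and hence specialize to $p=2$.

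First I would dispose of the extreme degrees. Every $1$-cell has boundary $2\cdot(\text{the }0\text{-cell})$, so $\partial_1 = 0$ over $\F_2$ and $H_0 = \F_2$, generated by the unique $0$-cell. The single $3$-cell has boundary with integer coefficients $4, -6, 4$, all even, so $\partial_3 = 0$ over $\F_2$; hence $\Ker\partial_3 = C_3$ and $H_3 = \F_2$, generated by the stacked $3$-cell.

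Next I would treat the middle via $\partial_2$. Writing $a, b, c$ for the three $1$-cells, the three displayed $2$-cell boundaries reduce mod $2$ to $b+c$, $0$, and $a+b$ respectively, since the two outer cells carry coefficients $\pm 3 \equiv 1$ while the middle cell $12|34$ carries $\pm 2 \equiv 0$. Therefore $\Ima\partial_2 = \langle a+b,\, b+c\rangle$ has rank $2$ inside $\Ker\partial_1 = \F_2^3$, giving $H_1 \cong \F_2^3/\langle a+b,\, b+c\rangle \cong \F_2$ with the common class $[a]=[b]=[c]$ as generator. Reading the same three images, $\Ker\partial_2$ is the rank-one space spanned by the middle cell $12|34$; since $\Ima\partial_3 = 0$, this yields $H_2 = \F_2$ generated by $12|34$. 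Collecting the four groups proves the proposition.

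There is no substantive obstacle here: once the integral boundary formulas of \S\ref{FC4} are in hand, the entire argument is reduction modulo $2$ followed by rank computations over $\F_2$. The only point demanding care is the bookkeeping, namely correctly matching each ordered-partition symbol to its basis label and tracking which coefficients are even --- exactly the verification the author suppresses as ``elementary calculations.''
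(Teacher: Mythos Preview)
Your proposal is correct and is exactly the ``elementary calculation'' the paper alludes to but omits: reducing the integral boundary formulas of \S\ref{FC4} modulo $2$ and reading off the ranks. There is nothing to add or correct.
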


\begin{remark}
 Under the stable splitting
 \[
  \Omega^2S^{n+2} \shot \bigvee_j
 F(\C,j)_{+}\wedge_{\Sigma_j}(S^n)^{\wedge j},
 \]
 the elements in the mod $2$ homology of $F(\C,4)/\Sigma_4$, up to a
 shift of degree, correspond to elements in $H_*(\Omega^2S^{n+2})$ as
 follows:
 \begin{eqnarray*}
  Q_0^2(x) & \longleftrightarrow & 
   	 \raisebox{-6pt}{\begin{picture}(64,15)(-2,0)
	 \put(0, 0){\framebox(15,15){$1$}}
	 \put(15, 0){\framebox(15,15){$2$}}
	 \put(30, 0){\framebox(15,15){$3$}}
	 \put(45, 0){\framebox(15,15){$4$}}
			 \end{picture}} \\
  Q_0(x)Q_1(x) & \longleftrightarrow & 
   \raisebox{-12pt}{\begin{picture}(49,30)(-2,0)
	 \put(0, 0){\framebox(15,15){$1$}}
	 \put(0, 15){\framebox(15,15){$2$}}
	 \put(15, 0){\framebox(15,15){$3$}}
	 \put(30, 0){\framebox(15,15){$4$}}
	\end{picture}} = 
	\raisebox{-12pt}{\begin{picture}(49,30)(-2,0)
	 \put(0, 0){\framebox(15,15){$1$}}
	 \put(15, 0){\framebox(15,15){$2$}}
	 \put(15, 15){\framebox(15,15){$3$}}
	 \put(30, 0){\framebox(15,15){$4$}}
	\end{picture}} = 
	\raisebox{-12pt}{\begin{picture}(49,30)(-2,0)
	 \put(0, 0){\framebox(15,15){$1$}}
	 \put(15, 0){\framebox(15,15){$2$}}
	 \put(30, 0){\framebox(15,15){$3$}}
	 \put(30, 15){\framebox(15,15){$4$}}
	\end{picture}} \\
  Q_0Q_1(x) & \longleftrightarrow & 
	\raisebox{-12pt}{\begin{picture}(34,30)(-2,0)
	 \put(0, 0){\framebox(15,15){$1$}}
	 \put(0, 15){\framebox(15,15){$2$}}
	 \put(15, 0){\framebox(15,15){$3$}}
	 \put(15, 15){\framebox(15,15){$4$}}
	\end{picture}} \\
  Q_1^2(x) & \longleftrightarrow &
   \raisebox{-28pt}{\begin{picture}(19,60)(-2,0)
		     \put(0, 0){\framebox(15,15){$1$}}
		     \put(0, 15){\framebox(15,15){$2$}}
		     \put(0, 30){\framebox(15,15){$3$}}
		     \put(0, 45){\framebox(15,15){$4$}}
		    \end{picture}}
 \end{eqnarray*}
\end{remark}

\section{The Center of Mass Configuration}
\label{center}

Let us recall the definition of the center of mass configuration space
introduced by F.~Cohen and Kamiyama in \cite{math.AT/0611732}.

\begin{definition}
 For $I, J \subset \{1, \cdots, n\}$, define
 \[
 L_{I,J} = \rset{(x_1,\cdots,x_n)\in\R^n}{ |J|\sum_{i\in I}x_i =
 |I|\sum_{j\in J}x_j }
 \]
 For $\ell<n$, define a real central hyperplane arrangement
 $\mathcal{C}_{n-1}^{\ell}$ by
 \[
 \mathcal{C}_{n-1}^{\ell} = \lset{L_{I,J}}{ I,J\subset \{1,\cdots,n\},
 |I|=|J|=p, I \neq J}.
 \]
 The configuration space of $n$ points with distinct center of mass of
 $\ell$ points is defined as the complement of the complexification of
 $\mathcal{C}_{n-1}^{\ell}$
 \[
  M_{\ell}(\C,n) = \C^n - \bigcup_{L_{I,J}\in \mathcal{C}_{n-1}^{\ell}}
 L_{I,J}\otimes\C 
 \]
\end{definition}

As we have seen in \S\ref{introduction}, we have the following
inclusions of arrangements 
\[
 \mathcal{A}_{n-1} = \mathcal{C}_{n-1}^1 \subset \mathcal{C}_{n-1}^2
 \subset \cdots \subset \mathcal{C}_{n-1}^{[\frac{n}{2}]} \supset \cdots
 \supset \mathcal{C}_{n-1}^{n-1} = \mathcal{A}_{n-1}.
\]
By Proposition \ref{Salvetti_by_poset}, we obtain a sequence of maps
between the Salvetti complexes 
\[
 \Sal(\mathcal{C}_{n-1}^{p}) \to \cdots \to \Sal(\mathcal{C}_{n-1}^2) \to 
 \Sal(\mathcal{C}_{n-1}^1) = \Sal(\mathcal{A}_{n-1})
\]
when $p\le [\frac{n}{2}]$. We also have
\[
 \Sal(\mathcal{C}_{n-1}^{p}) = \Sal(\mathcal{C}_{n-1}^{n-p}) \to \cdots
 \to \Sal(\mathcal{C}_{n-1}^2) \to  
 \Sal(\mathcal{C}_{n-1}^1) = \Sal(\mathcal{A}_{n-1})
\]
when $p>[\frac{n}{2}]$.

We would like to know if these inclusions induce isomorphisms of
homology groups with coefficients in $\F_p(\pm 1)$.
Our strategy is to compute the homology of the kernel of the map 
\[
 i^k_n : C_*(\Sal(\mathcal{C}_{n-1}^k))\otimes_{\Sigma_n}\F_p(\pm 1)
 \longrightarrow
 C_*(\Sal(\mathcal{C}_{n-1}^{k-1}))\otimes_{\Sigma_n}\F_{p}(\pm 1),
\]
for $k\le [\frac{n}{2}]$. By Lemma \ref{i_is_surjective}, these chain
maps are surjective.

\begin{corollary}
 \label{i^k_n_is_surjective}
 For $k\le [\frac{n}{2}]$, the map
 \[
 i^k_n : C_*(\Sal(\mathcal{C}_{n-1}^k))\otimes_{\Sigma_n}\F_p(\pm 1)
 \longrightarrow
 C_*(\Sal(\mathcal{C}_{n-1}^{k-1}))\otimes_{\Sigma_n}\F_{p}(\pm 1),
 \]
 is surjective. 
\end{corollary}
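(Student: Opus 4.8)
The plan is to obtain this immediately from Lemma~\ref{i_is_surjective} together with the right-exactness of the tensor product, so the corollary is a direct consequence of what has already been established. For $k \le [\frac{n}{2}]$ the displayed chain of inclusions of arrangements gives $\mathcal{C}_{n-1}^{k-1} \subset \mathcal{C}_{n-1}^k$. Taking $\mathcal{B} = \mathcal{C}_{n-1}^{k-1}$ and $\mathcal{A} = \mathcal{C}_{n-1}^k$, Lemma~\ref{i_is_surjective} applies and produces a surjective chain map
\[
 i^* : C_*(\Sal(\mathcal{C}_{n-1}^k)) \longrightarrow C_*(\Sal(\mathcal{C}_{n-1}^{k-1})).
\]

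Next I would observe that $i^k_n$ is by definition the result of applying the functor $(-)\otimes_{\Z[\Sigma_n]}\F_p(\pm 1)$ to $i^*$ degreewise. I then invoke the standard fact that a tensor product functor is right exact and hence carries epimorphisms to epimorphisms: the cokernel of a surjection is zero, and a right-exact functor preserves cokernels, so the induced map again has vanishing cokernel. Applying this in each degree to the surjection $i^*$ yields surjectivity of $i^k_n$, which completes the argument.

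There is no genuine obstacle here, and the ``hard part'' is really upstream. The one point worth stating explicitly is that right-exactness alone suffices, so no flatness hypothesis on $\F_p(\pm 1)$ as a $\Sigma_n$-module is required; preservation of surjections is strictly weaker than exactness and is exactly what right-exactness delivers. The substantive content sits inside Lemma~\ref{i_is_surjective}, where surjectivity of $i^*$ is traced back to the surjectivity of the restriction map $\mathcal{L}(\mathcal{C}_{n-1}^k) \to \mathcal{L}(\mathcal{C}_{n-1}^{k-1})$ of face lattices; having that in hand, the corollary is purely formal.
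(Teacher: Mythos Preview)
Your proposal is correct and follows essentially the same approach as the paper: the paper simply states the corollary immediately after the sentence ``By Lemma~\ref{i_is_surjective}, these chain maps are surjective,'' leaving the passage to the tensored chain complex implicit. Your added remark that right-exactness of $(-)\otimes_{\Z[\Sigma_n]}\F_p(\pm 1)$ is what carries the surjection across is exactly the missing justification, so your argument is a slightly more explicit version of the paper's one-line deduction.
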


In the rest of this article, we consider the first stage, i.e.
\[
 i^2_n : C_*(\Sal(\mathcal{C}_{n-1}^2))\otimes_{\Sigma_n}\F_p(\pm 1)
 \longrightarrow
 C_*(\Sal(\mathcal{C}_{n-1}^{1}))\otimes_{\Sigma_n}\F_{p}(\pm 1) = 
 C_*(\Sal(\mathcal{A}_{n-1}))\otimes_{\Sigma_n}\F_{p}(\pm 1).
\]
When $n=3$,
\[
  \mathcal{C}^2_2 = \mathcal{C}_2^{3-2} = \mathcal{C}_2^1 =
  \mathcal{A}_2 
\]
and there is nothing to compute. The first nontrivial case is
\[
 i^2_4 : C_*(\Sal(\mathcal{C}_{3}^2))\otimes_{\Sigma_4}\F_p(\pm 1)
 \longrightarrow
 C_*(\Sal(\mathcal{C}_{3}^{1}))\otimes_{\Sigma_4}\F_{p}(\pm 1) = 
 C_*(\Sal(\mathcal{A}_{3}))\otimes_{\Sigma_4}\F_{p}(\pm 1).
\]

By Corollary \ref{i^k_n_is_surjective}, it suffices to calculate
the kernel of $i_4^2$ in order to compare
$H_*(C_*(\Sal(\mathcal{C}_{3}^2))\otimes_{\Sigma_4}\F_p(\pm 1))$ and
$H_*(C_*(\Sal(\mathcal{A}_3))\otimes_{\Sigma_4}\F_p(\pm 1))$. 

\begin{definition}
 \label{abbreviated_chain_complexes}
 We denote
 \[
  K^{n,k}_* = \Ker (i_{n}^k :
 C_*(\Sal(\mathcal{C}_{n-1}^k))\otimes_{\Sigma_n}\F_p(\pm 1) \to
 C_*(\Sal(\mathcal{C}_{n-1}^{k-1}))\otimes_{\Sigma_n}\F_p(\pm 1)).
 \]
 For simplicity, we also abbreviate
 \[
  C_*^{n,k} = C_*(\Sal(\mathcal{C}_{n-1}^k))\otimes_{\Sigma_n}\F_p(\pm 1).
 \]
\end{definition}

Thus we have a short exact sequence of chain complexes
\[
 0 \longrightarrow K_*^{n,k} \longrightarrow C_*^{n,k} \rarrow{i_n^k}
 C_*^{n,k-1} \longrightarrow 0.
\]

\subsection{The Face Lattice of $\mathcal{C}_3^2$}
\label{face_lattice}

In order to compute
$H_*(\Sal(\mathcal{C}_3^2)\otimes_{\Sigma_4}\F_p(\pm 1))$, the first
step is to determine the face lattice of 
$\mathcal{C}_3^2$. 

Since $\mathcal{C}_3^2 = \mathcal{A}_3 \cup \{L_{\{1,2\},\{3,4\}},
L_{\{1,3\},\{2,4\}}, L_{\{1,4\},\{2,3\}}\}$, the faces of
$\mathcal{C}_3^2$ are given by splitting the faces of $\mathcal{A}_3$ by
the hyperplanes
\[
 L_{\{1,2\},\{3,4\}}, L_{\{1,3\},\{2,4\}}, L_{\{1,4\},\{2,3\}}.
\]
In order to understand these cuttings, let us see how the chamber 
\[
 \{(x_1,x_2,x_3,x_4)\in\R^4 \mid x_1 < x_2 < x_3 < x_4\}
\]
 is cut. Notice that under the action of $\Sigma_4$, the cells in
$M_2(\C,4)$ can be represented by cells related to this chamber of
$\mathcal{A}_3$.

The only hyperplane among $L_{\{1,2\},\{3,4\}}, L_{\{1,3\},\{2,4\}},
L_{\{1,4\},\{2,3\}}$ that intersects with this chamber is
$L_{\{1,4\},\{2,3\}}$ and the chamber is cut into two pieces:
\begin{multline*}
 \{(x_1,x_2,x_3,x_4)\in\R^4 \mid x_1 < x_2 < x_3 < x_4\} \\
 = \{(x_1,x_2,x_3,x_4)\in\R^4 \mid x_1 < x_2 < x_3 <x_4, x_1+x_4\le
 x_2+x_3\} \\
 \cup \{(x_1,x_2,x_3,x_4)\in\R^4 \mid x_1 < x_2 < x_3 < x_4,
 x_1+x_4\ge x_2+x_3\}.
\end{multline*}
We denote these chambers by the following symbols:
\begin{multline*}
 \raisebox{-6pt}{\begin{picture}(75,15)(0,0)
	 \put(0, 0){\framebox(15,15){$1$}}
	 \put(15, 0){\framebox(15,15){}}
	 \put(30, 0){\framebox(15,15){$2$}}
	 \put(45, 0){\framebox(15,15){$3$}}
	 \put(60, 0){\framebox(15,15){$4$}}
		 \end{picture}} \\
		  = \{(x_1,x_2,x_3,x_4)\in\R^4 \mid x_1 < x_2 <
		 x_3 < x_4, x_1+x_4<x_2+x_3\} 
\end{multline*}
\begin{multline*}
 \raisebox{-6pt}{\begin{picture}(75,15)(0,0)
	 \put(0, 0){\framebox(15,15){$1$}}
	 \put(15, 0){\framebox(15,15){$2$}}
	 \put(30, 0){\framebox(15,15){$3$}}
	 \put(45, 0){\framebox(15,15){}}
	 \put(60, 0){\framebox(15,15){$4$}}
		 \end{picture}} \\
		  = \{(x_1,x_2,x_3,x_4)\in\R^4 \mid x_1 < x_2 <
		 x_3 < x_4, x_1+x_4>x_2+x_3\}.
\end{multline*}

The faces of these chambers of $\mathcal{C}_3^2$ are also denoted by
analogous symbols. The chamber
\raisebox{-6pt}{\begin{picture}(75,15)(0,0)
		 \put(0, 0){\framebox(15,15){$1$}}
		 \put(15, 0){\framebox(15,15){}}
		 \put(30, 0){\framebox(15,15){$2$}}
		 \put(45, 0){\framebox(15,15){$3$}}
		 \put(60, 0){\framebox(15,15){$4$}}
		\end{picture}}
has five $3$-dimensional faces, but under the action of $\Sigma_4$, we
only need the following three faces:
\begin{eqnarray*}
 \raisebox{-12pt}{\begin{picture}(60,30)(0,0)
	 \put(0, 0){\framebox(15,15){$1$}}
	 \put(15, 0){\framebox(15,15){}}
	 \put(30, 0){\framebox(15,15){$2$}}
	 \put(45, 0){\framebox(15,15){$3$}}
	 \put(45, 15){\framebox(15,15){$4$}}
		 \end{picture}}
		 & = & \{(x_1,x_2,x_3,x_4)\in\R^4 \mid x_1 < x_2 <
		 x_3 = x_4\}, \\
 \raisebox{-12pt}{\begin{picture}(60,30)(0,0)
	 \put(0, 0){\framebox(15,15){$1$}}
	 \put(15, 0){\framebox(15,15){}}
	 \put(30, 0){\framebox(15,15){$2$}}
	 \put(30, 15){\framebox(15,15){$3$}}
	 \put(45, 0){\framebox(15,15){$4$}}
		 \end{picture}}
		 & = & \{(x_1,x_2,x_3,x_4)\in\R^4 \mid x_1 < x_2 =
		 x_3 < x_4, x_1+x_4<x_2+x_3\}, \\
 \raisebox{-6pt}{\begin{picture}(75,15)(0,0)
	 \put(0, 0){\framebox(15,15){$1$}}
	 \put(15, 0){\framebox(15,15){$2$}}
	 \put(30, 0){\framebox(15,15){}}
	 \put(45, 0){\framebox(15,15){$3$}}
	 \put(60, 0){\framebox(15,15){$4$}}
		 \end{picture}}
		 & = & \{(x_1,x_2,x_3,x_4)\in\R^4 \mid x_1 < x_2 <
		 x_3 < x_4, x_1+x_4=x_2+x_3\}. 
\end{eqnarray*}

Similarly, we need the following three faces for the chamber  
\raisebox{-6pt}{\begin{picture}(79,15)(-2,0)
	 \put(0, 0){\framebox(15,15){$1$}}
	 \put(15, 0){\framebox(15,15){$2$}}
	 \put(30, 0){\framebox(15,15){$3$}}
	 \put(45, 0){\framebox(15,15){}}
	 \put(60, 0){\framebox(15,15){$4$}}
		 \end{picture}}.

\begin{eqnarray*}
 \raisebox{-12pt}{\begin{picture}(60,30)(0,0)
	 \put(0, 0){\framebox(15,15){$1$}}
	 \put(0, 15){\framebox(15,15){$2$}}
	 \put(15, 0){\framebox(15,15){$3$}}
	 \put(30, 0){\framebox(15,15){}}
	 \put(45, 0){\framebox(15,15){$4$}}
		 \end{picture}}
		 & = & \{(x_1,x_2,x_3,x_4)\in\R^4 \mid x_1 = x_2 <
		 x_3 < x_4\}, \\
 \raisebox{-12pt}{\begin{picture}(60,30)(0,0)
	 \put(0, 0){\framebox(15,15){$1$}}
	 \put(15, 0){\framebox(15,15){$2$}}
	 \put(15, 15){\framebox(15,15){$3$}}
	 \put(30, 0){\framebox(15,15){}}
	 \put(45, 0){\framebox(15,15){$4$}}
		 \end{picture}}
		 & = & \{(x_1,x_2,x_3,x_4)\in\R^4 \mid x_1 < x_2 =
		 x_3 < x_4, x_1+x_4>x_2+x_3\}, \\
 \raisebox{-6pt}{\begin{picture}(75,15)(0,0)
	 \put(0, 0){\framebox(15,15){$1$}}
	 \put(15, 0){\framebox(15,15){$2$}}
	 \put(30, 0){\framebox(15,15){}}
	 \put(45, 0){\framebox(15,15){$3$}}
	 \put(60, 0){\framebox(15,15){$4$}}
		 \end{picture}}
		 & = & \{(x_1,x_2,x_3,x_4)\in\R^4 \mid x_1 < x_2 <
		 x_3 < x_4, x_1+x_4=x_2+x_3\}. 
\end{eqnarray*}

The $2$-dimensional faces we need are the following:
\begin{eqnarray*}
 \raisebox{-18pt}{\begin{picture}(45,45)(0,0)
	 \put(0, 0){\framebox(15,15){$1$}}
	 \put(15, 0){\framebox(15,15){}}
	 \put(30, 0){\framebox(15,15){$2$}}
	 \put(30, 15){\framebox(15,15){$3$}}
	 \put(30, 30){\framebox(15,15){$4$}}
		 \end{picture}}
		 & = & \{(x_1,x_2,x_3,x_4)\in\R^4 \mid x_1 < x_2 =
		 x_3 = x_4\}, \\
 \raisebox{-12pt}{\begin{picture}(45,30)(0,0)
	 \put(0, 0){\framebox(15,15){$1$}}
	 \put(0, 15){\framebox(15,15){$2$}}
	 \put(15, 0){\framebox(15,15){}}
	 \put(30, 0){\framebox(15,15){$3$}}
	 \put(30, 15){\framebox(15,15){$4$}}
		 \end{picture}}
		 & = & \{(x_1,x_2,x_3,x_4)\in\R^4 \mid x_1 = x_2 <
		 x_3 = x_4, x_1+x_4=x_2+x_3\}, \\
 \raisebox{-12pt}{\begin{picture}(100,40)(0,0)
	 \put(0, 0){\framebox(15,15){$1$}}
	 \put(15, 0){\framebox(15,15){}}
	 \put(30, 0){\framebox(15,15){$2$}}
	 \put(30, 15){\framebox(15,15){$3$}}
	 \put(45, 0){\framebox(15,15){}}
	 \put(60, 0){\framebox(15,15){$4$}}
		 \end{picture}}
		 & = & \{(x_1,x_2,x_3,x_4)\in\R^4 \mid x_1 < x_2 =
		 x_3 < x_4, x_1+x_4=x_2+x_3\}, \\
 \raisebox{-18pt}{\begin{picture}(45,45)(0,0)
	 \put(0, 0){\framebox(15,15){$1$}}
	 \put(0, 15){\framebox(15,15){$2$}}
	 \put(0, 30){\framebox(15,15){$3$}}
	 \put(15, 0){\framebox(15,15){}}
	 \put(30, 0){\framebox(15,15){$4$}}
		 \end{picture}}
		 & = & \{(x_1,x_2,x_3,x_4)\in\R^4 \mid x_1 = x_2 =
		 x_3 < x_4\}. 
\end{eqnarray*}

All these faces have the following $1$-dimensional face in common.
\[
 \raisebox{-24pt}{\begin{picture}(15,60)(0,0)
	 \put(0, 0){\framebox(15,15){$1$}}
	 \put(0, 15){\framebox(15,15){$2$}}
	 \put(0, 30){\framebox(15,15){$3$}}
	 \put(0, 45){\framebox(15,15){$4$}}
		 \end{picture}}
		  = \{(x_1,x_2,x_3,x_4)\in\R^4 \mid x_1 = x_2 =
		 x_3 = x_4\}. 
\]

Notice that $\Sigma_4$ acts on $\mathcal{L}(\mathcal{C}_3^2)$ and the
action is compatible with the ordering. We have the following
description of the poset $\mathcal{L}(\mathcal{C}_3^2)/\Sigma_4$.

\begin{lemma}
 The poset $\mathcal{L}(\mathcal{C}_3^2)/\Sigma_4$ has the following
 structure:
 \begin{center}
  \begin{picture}(350,300)(0,0)
   \put(300,280){\makebox(0,0){\begin{picture}(75,15)(0,0)
	 \put(0, 0){\framebox(15,15){$1$}}
	 \put(15, 0){\framebox(15,15){}}
	 \put(30, 0){\framebox(15,15){$2$}}
	 \put(45, 0){\framebox(15,15){$3$}}
	 \put(60, 0){\framebox(15,15){$4$}}
		 \end{picture}}}
   \put(100,280){\makebox(0,0){\begin{picture}(75,15)(0,0)
	 \put(0, 0){\framebox(15,15){$1$}}
	 \put(15, 0){\framebox(15,15){$2$}}
	 \put(30, 0){\framebox(15,15){$3$}}
	 \put(45, 0){\framebox(15,15){}}
	 \put(60, 0){\framebox(15,15){$4$}}
		 \end{picture}}}

   \put(390,210){\makebox(0,0){\begin{picture}(60,30)(0,0)
				\put(0, 0){\framebox(15,15){$1$}}
				\put(15, 0){\framebox(15,15){}}
				\put(30, 0){\framebox(15,15){$2$}}
				\put(30, 15){\framebox(15,15){$3$}}
				\put(45, 0){\framebox(15,15){$4$}}
			       \end{picture}}}
   \put(380,220){\vector(-1,1){40}}

   \put(300,210){\makebox(0,0){\begin{picture}(60,30)(0,0)
				\put(0, 0){\framebox(15,15){$1$}}
				\put(15, 0){\framebox(15,15){}}
				\put(30, 0){\framebox(15,15){$2$}}
				\put(45, 0){\framebox(15,15){$3$}}
				\put(45, 15){\framebox(15,15){$4$}}
			       \end{picture}}}
   \put(300,230){\vector(0,1){30}}

   \put(210,210){\makebox(0,0){\begin{picture}(75,15)(0,0)
			  \put(0, 0){\framebox(15,15){$1$}}
			  \put(15, 0){\framebox(15,15){$2$}}
			  \put(30, 0){\framebox(15,15){}}
			  \put(45, 0){\framebox(15,15){$3$}}
			  \put(60, 0){\framebox(15,15){$4$}}
			 \end{picture}}}
   \put(208,225){\vector(-2,1){60}}
   \put(212,225){\vector(2,1){60}}

   \put(120,210){\makebox(0,0){\begin{picture}(60,30)(0,0)
			  \put(0, 0){\framebox(15,15){$1$}}
			  \put(15, 0){\framebox(15,15){$2$}}
			  \put(15, 15){\framebox(15,15){$3$}}
			  \put(30, 0){\framebox(15,15){}}
			  \put(45, 0){\framebox(15,15){$4$}}
			 \end{picture}}}
   \put(120,230){\vector(0,1){30}}

   \put(30,210){\makebox(0,0){\begin{picture}(60,30)(0,0)
			  \put(0, 0){\framebox(15,15){$1$}}
			  \put(0, 15){\framebox(15,15){$2$}}
			  \put(15, 0){\framebox(15,15){$3$}}
			  \put(30, 0){\framebox(15,15){}}
			  \put(45, 0){\framebox(15,15){$4$}}
		 \end{picture}}}
   \put(30,220){\vector(1,1){40}}

   \put(310,130){\makebox(0,0){\begin{picture}(45,45)(0,0)
			  \put(0, 0){\framebox(15,15){$1$}}
			  \put(15, 0){\framebox(15,15){}}
			  \put(30, 0){\framebox(15,15){$2$}}
			  \put(30, 15){\framebox(15,15){$3$}}
			  \put(30, 30){\framebox(15,15){$4$}}
			 \end{picture}}}
   \put(308,155){\vector(0,1){35}}
   \put(310,155){\vector(2,1){60}}

   \put(220,130){\makebox(0,0){\begin{picture}(75,30)(0,0)
				\put(0, 0){\framebox(15,15){$1$}}
				\put(15, 0){\framebox(15,15){}}
				\put(30, 0){\framebox(15,15){$2$}}
				\put(30, 15){\framebox(15,15){$3$}}
				\put(45, 0){\framebox(15,15){}}
				\put(60, 0){\framebox(15,15){$4$}}
			       \end{picture}}}
   \put(218,150){\vector(-2,1){80}}
   \put(220,150){\vector(0,1){40}}
   \put(222,150){\vector(4,1){150}}

   \put(130,130){\makebox(0,0){\begin{picture}(45,30)(0,0)
			       \put(0, 0){\framebox(15,15){$1$}}
			       \put(0, 15){\framebox(15,15){$2$}}
			       \put(15, 0){\framebox(15,15){}}
			       \put(30, 0){\framebox(15,15){$3$}}
			       \put(30, 15){\framebox(15,15){$4$}}
			      \end{picture}}}
   \put(128,150){\vector(-2,1){80}}
   \put(130,150){\vector(1,1){40}}
   \put(132,150){\vector(4,1){150}}

   \put(40,130){\makebox(0,0){\begin{picture}(45,45)(0,0)
			     \put(0, 0){\framebox(15,15){$1$}}
			     \put(0, 15){\framebox(15,15){$2$}}
			     \put(0, 30){\framebox(15,15){$3$}}
			     \put(15, 0){\framebox(15,15){}}
			     \put(30, 0){\framebox(15,15){$4$}}
			    \end{picture}}}
   \put(40,150){\vector(0,1){40}}
   \put(42,150){\vector(1,1){40}}

   \put(180,0){\makebox(0,0)[b]{\begin{picture}(15,60)(0,0)
				 \put(0, 0){\framebox(15,15){$1$}}
				 \put(0, 15){\framebox(15,15){$2$}}
				 \put(0, 30){\framebox(15,15){$3$}}
				 \put(0, 45){\framebox(15,15){$4$}}
				\end{picture}}}
   \put(174,70){\vector(-4,1){120}}
   \put(178,70){\vector(-1,1){40}}
   \put(182,70){\vector(1,1){40}}
   \put(186,70){\vector(4,1){120}}
  \end{picture}
 \end{center}
\end{lemma}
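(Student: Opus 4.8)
The plan is to exploit that $\Sigma_4$, acting on $\R^4$ by permuting coordinates, is the reflection group of the braid arrangement, so that the closed chamber $\overline{C_0}=\{x_1\le x_2\le x_3\le x_4\}$ is a strict fundamental domain. Since $\Sigma_4$ permutes the three center-of-mass hyperplanes among themselves, $\mathcal{C}_3^2$ is $\Sigma_4$-stable and every orbit of faces of $\mathcal{C}_3^2$ has a representative in $\overline{C_0}$. First I would list the faces of the coarser arrangement $\mathcal{A}_3$ that lie in $\overline{C_0}$: these are indexed by the compositions of $4$, obtained by turning some of the strict signs in $x_1<x_2<x_3<x_4$ into equalities, and they run from the open chamber down to the diagonal line $x_1=x_2=x_3=x_4$, eight faces in all.

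Next I would determine how the added hyperplanes $L_{\{1,2\},\{3,4\}}$, $L_{\{1,3\},\{2,4\}}$, $L_{\{1,4\},\{2,3\}}$ subdivide these faces by testing the sign of the functionals $h=(x_1+x_2)-(x_3+x_4)$, $g=(x_1+x_3)-(x_2+x_4)$ and $f=(x_1+x_4)-(x_2+x_3)$. On $\overline{C_0}$ one has $h\le 0$ and $g\le 0$, each vanishing only on a boundary face, so neither changes sign and neither cuts through the relative interior of a face; only $f$ is indefinite. A short computation shows $f$ vanishes identically on $x_1=x_2<x_3=x_4$ and on the diagonal, is indefinite on the open chamber and on $x_1<x_2=x_3<x_4$, and has constant sign on the four remaining faces. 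Hence $f=0$ splits the chamber into the two topes $\{f<0\}$, $\{f>0\}$ together with a new $3$-dimensional wall $\{x_1<x_2<x_3<x_4,\ f=0\}$, and splits $x_1<x_2=x_3<x_4$ into its two halves plus a separating $2$-dimensional face, while the other faces survive unchanged. This yields exactly the twelve faces of the statement: two topes, five of dimension $3$, four of dimension $2$, and the line.

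I would then argue these twelve are pairwise $\Sigma_4$-inequivalent, hence a complete and irredundant set of orbit representatives. Completeness is the fundamental-domain property; irredundancy holds because the only identifications $\overline{C_0}$ can carry are by the wall reflections $(12)$, $(23)$, $(34)$, and each of these fixes pointwise the wall it bounds, hence fixes every face of $\mathcal{C}_3^2$ lying in that wall. Finally I would read off the covering relations by checking the closure containments $F\subset\overline{G}$ among the representatives directly from the defining inequalities; for example the central wall $\{x_1<x_2<x_3<x_4,\ f=0\}$ lies in the closure of both topes, and the diagonal lies in the closure of all four $2$-dimensional faces, giving the four edges emanating from the bottom of the diagram.

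The main obstacle will be the covering-relation bookkeeping rather than any single difficult idea: for each lower face lying on $\{f=0\}$ one must decide which of the two sides $\{f<0\}$ and $\{f>0\}$ of each adjacent cell actually contains it in its closure, so that the left/right placement of the arrows in the Hasse diagram is correct. The sign computations for $f$, $g$, $h$ are routine and the fundamental-domain reduction is standard for the braid arrangement; essentially all the care is in matching each closure relation to the correct arrow.
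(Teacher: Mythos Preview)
Your proposal is correct and follows essentially the same approach as the paper: the paper does not give a formal proof of this lemma but precedes it with exactly the computation you describe, namely restricting attention to the closed chamber $\overline{C_0}=\{x_1\le x_2\le x_3\le x_4\}$, observing that among the three added hyperplanes only $L_{\{1,4\},\{2,3\}}$ (your $f=0$) meets its interior, and then listing the resulting faces with their defining inequalities. Your write-up is slightly more systematic in explicitly ruling out $L_{\{1,2\},\{3,4\}}$ and $L_{\{1,3\},\{2,4\}}$ via the sign of $h$ and $g$, and in invoking the strict fundamental domain property to conclude irredundancy, but the substance is the same.
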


\subsection{The Cellular Structure on $\mathrm{Sal}(\mathcal{C}_3^2)/\Sigma_4$}
\label{cells}

Let us determine the cellular structure of
$\Sal(\mathcal{C}_3^2)/\Sigma_4$. 
The cell decomposition of the Salvetti complex for
$\mathcal{C}_3^2$ is compatible with the action of $\Sigma_4$ and the
quotient $\Sal(\mathcal{C}_3^2)/\Sigma_4$ has the induced cell
decomposition.

The cells of the Salvetti complex are labeled by pairs of a
face $F$ and a chamber $C$ with $C\ge F$. The cell for the pair $(F,C)$
is denoted by $D(F,C)$ in \S\ref{Salvetti_complex_basics}. Thus the cells of
$\Sal(\mathcal{C}_3^2)/\Sigma_4$ are in one-to-one correspondence with
elements in  
\[
 \{([F],[C]) \mid [F]\in\mathcal{L}(\mathcal{C}_3^2)/\Sigma_4, [C] \in
 \mathcal{L}^{(0)}(\mathcal{C}_3^2)/\Sigma_4, F\le C\}.
\]

In the case of $\mathcal{C}_3^2$, there are only two chambers in 
$\mathcal{L}^{(0)}(\mathcal{C}_3^2)/\Sigma_4$, and we denote the cells
corresponding to the pair $\left([F],\left[
\raisebox{-6pt}{\begin{picture}(79,15)(-2,0)
	 \put(0, 0){\framebox(15,15){$1$}}
	 \put(15, 0){\framebox(15,15){$2$}}
	 \put(30, 0){\framebox(15,15){$3$}}
	 \put(45, 0){\framebox(15,15){}}
	 \put(60, 0){\framebox(15,15){$4$}}
		\end{picture}}\right]\right)$ and
$\left([F],\left[\raisebox{-6pt}{\begin{picture}(79,15)(-2,0)
	 \put(0, 0){\framebox(15,15){$1$}}
	 \put(15, 0){\framebox(15,15){}}
	 \put(30, 0){\framebox(15,15){$2$}}
	 \put(45, 0){\framebox(15,15){$3$}}
	 \put(60, 0){\framebox(15,15){$4$}}
		   \end{picture}}\right]\right)$ by $F^+$ and
	 $F^-$, respectively.
To be more efficient, we simply denote them by $F$ when $F$ is contained
in only one chamber.

More explicitly,

\begin{lemma}
 $\Sal(\mathcal{C}_3^2)/\Sigma_4$
 has 
 \begin{itemize}
  \item  two $0$-cells
	 \begin{center}
	  \raisebox{-6pt}{\begin{picture}(79,15)(-2,0)
	 \put(0, 0){\framebox(15,15){$1$}}
	 \put(15, 0){\framebox(15,15){$2$}}
	 \put(30, 0){\framebox(15,15){$3$}}
	 \put(45, 0){\framebox(15,15){}}
	 \put(60, 0){\framebox(15,15){$4$}}
		\end{picture}}, 
	  \raisebox{-6pt}{\begin{picture}(79,15)(-2,0)
	 \put(0, 0){\framebox(15,15){$1$}}
	 \put(15, 0){\framebox(15,15){}}
	 \put(30, 0){\framebox(15,15){$2$}}
	 \put(45, 0){\framebox(15,15){$3$}}
	 \put(60, 0){\framebox(15,15){$4$}}
		   \end{picture}}.
	 \end{center}

  \item  six $1$-cells 
	 \begin{center}
	  \raisebox{-12pt}{\begin{picture}(64,30)(-2,0)
	   \put(0, 0){\framebox(15,15){$1$}}
	   \put(15, 0){\framebox(15,15){}}
	   \put(30, 0){\framebox(15,15){$2$}}
	   \put(30, 15){\framebox(15,15){$3$}}
	   \put(45, 0){\framebox(15,15){$4$}}
	  \end{picture}},
	  \raisebox{-12pt}{\begin{picture}(64,30)(-2,0)
				\put(0, 0){\framebox(15,15){$1$}}
				\put(15, 0){\framebox(15,15){}}
				\put(30, 0){\framebox(15,15){$2$}}
				\put(45, 0){\framebox(15,15){$3$}}
				\put(45, 15){\framebox(15,15){$4$}}
			   \end{picture}},
	  $\raisebox{-6pt}{\begin{picture}(79,15)(-2,0)
			  \put(0, 0){\framebox(15,15){$1$}}
			  \put(15, 0){\framebox(15,15){$2$}}
			  \put(30, 0){\framebox(15,15){}}
			  \put(45, 0){\framebox(15,15){$3$}}
			  \put(60, 0){\framebox(15,15){$4$}}
			 \end{picture}}^+$,
	  $\raisebox{-6pt}{\begin{picture}(79,15)(-2,0)
			  \put(0, 0){\framebox(15,15){$1$}}
			  \put(15, 0){\framebox(15,15){$2$}}
			  \put(30, 0){\framebox(15,15){}}
			  \put(45, 0){\framebox(15,15){$3$}}
			  \put(60, 0){\framebox(15,15){$4$}}
			 \end{picture}}^-$,
	  \raisebox{-12pt}{\begin{picture}(64,30)(-2,0)
			  \put(0, 0){\framebox(15,15){$1$}}
			  \put(15, 0){\framebox(15,15){$2$}}
			  \put(15, 15){\framebox(15,15){$3$}}
			  \put(30, 0){\framebox(15,15){}}
			  \put(45, 0){\framebox(15,15){$4$}}
			   \end{picture}},
	  \raisebox{-12pt}{\begin{picture}(64,30)(-2,0)
			  \put(0, 0){\framebox(15,15){$1$}}
			  \put(0, 15){\framebox(15,15){$2$}}
			  \put(15, 0){\framebox(15,15){$3$}}
			  \put(30, 0){\framebox(15,15){}}
			  \put(45, 0){\framebox(15,15){$4$}}
			   \end{picture}}.
	 \end{center}

  \item  six $2$-cells
	 \begin{center}
	  \raisebox{-18pt}{\begin{picture}(49,45)(-2,0)
			  \put(0, 0){\framebox(15,15){$1$}}
			  \put(15, 0){\framebox(15,15){}}
			  \put(30, 0){\framebox(15,15){$2$}}
			  \put(30, 15){\framebox(15,15){$3$}}
			  \put(30, 30){\framebox(15,15){$4$}}
			   \end{picture}},
	  $\raisebox{-12pt}{\begin{picture}(79,30)(-2,0)
			    \put(0, 0){\framebox(15,15){$1$}}
			    \put(15, 0){\framebox(15,15){}}
			    \put(30, 0){\framebox(15,15){$2$}}
			    \put(30, 15){\framebox(15,15){$3$}}
			    \put(45, 0){\framebox(15,15){}}
			    \put(60, 0){\framebox(15,15){$4$}}
			   \end{picture}}^+$,
	  $\raisebox{-12pt}{\begin{picture}(79,30)(-2,0)
			    \put(0, 0){\framebox(15,15){$1$}}
			    \put(15, 0){\framebox(15,15){}}
			    \put(30, 0){\framebox(15,15){$2$}}
			    \put(30, 15){\framebox(15,15){$3$}}
			    \put(45, 0){\framebox(15,15){}}
			    \put(60, 0){\framebox(15,15){$4$}}
			   \end{picture}}^-$,
	  $\raisebox{-12pt}{\begin{picture}(49,30)(-2,0)
			       \put(0, 0){\framebox(15,15){$1$}}
			       \put(0, 15){\framebox(15,15){$2$}}
			       \put(15, 0){\framebox(15,15){}}
			       \put(30, 0){\framebox(15,15){$3$}}
			       \put(30, 15){\framebox(15,15){$4$}}
			   \end{picture}}^+$,
	  $\raisebox{-12pt}{\begin{picture}(49,30)(-2,0)
			       \put(0, 0){\framebox(15,15){$1$}}
			       \put(0, 15){\framebox(15,15){$2$}}
			       \put(15, 0){\framebox(15,15){}}
			       \put(30, 0){\framebox(15,15){$3$}}
			       \put(30, 15){\framebox(15,15){$4$}}
			   \end{picture}}^-$,
	  \raisebox{-18pt}{\begin{picture}(49,45)(-2,0)
			     \put(0, 0){\framebox(15,15){$1$}}
			     \put(0, 15){\framebox(15,15){$2$}}
			     \put(0, 30){\framebox(15,15){$3$}}
			     \put(15, 0){\framebox(15,15){}}
			     \put(30, 0){\framebox(15,15){$4$}}
			   \end{picture}}.
	 \end{center}

  \item  two $3$-cells
	 \begin{center}
	  $\raisebox{-24pt}{\begin{picture}(19,60)(-2,0)
				 \put(0, 0){\framebox(15,15){$1$}}
				 \put(0, 15){\framebox(15,15){$2$}}
				 \put(0, 30){\framebox(15,15){$3$}}
				 \put(0, 45){\framebox(15,15){$4$}}
				\end{picture}}^+$,
	  $\raisebox{-24pt}{\begin{picture}(19,60)(-2,0)
				 \put(0, 0){\framebox(15,15){$1$}}
				 \put(0, 15){\framebox(15,15){$2$}}
				 \put(0, 30){\framebox(15,15){$3$}}
				 \put(0, 45){\framebox(15,15){$4$}}
				\end{picture}}^-$.
	 \end{center}
 \end{itemize}

\end{lemma}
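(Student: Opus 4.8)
The plan is to read off the cell structure directly from the indexing of Salvetti cells by face--chamber pairs, using the poset $\mathcal{L}(\mathcal{C}_3^2)/\Sigma_4$ computed in the preceding lemma. By Lemma \ref{D(F,C)} the cells of $\Sal(\mathcal{C}_3^2)$ are the disks $D(F,C)$ indexed by pairs $F\le C$ with $C$ a chamber, and $\dim D(F,C)=\codim F$. Since the cell decomposition is $\Sigma_4$-equivariant, the cells of the quotient $\Sal(\mathcal{C}_3^2)/\Sigma_4$ are the $\Sigma_4$-orbits of such pairs, graded by $\codim F$. So the entire task is to enumerate these orbits and record the codimension of each.

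First I would reduce the orbit count of pairs to a count of faces of the two chambers. The key observation is that every chamber of $\mathcal{C}_3^2$ lies inside a unique chamber of the braid arrangement $\mathcal{A}_3$, and the $\Sigma_4$-stabilizer of a braid chamber is trivial; hence the stabilizer of each $\mathcal{C}_3^2$-chamber is trivial as well. The two chambers $A^{+}=\{x_1<x_2<x_3<x_4,\ x_1+x_4>x_2+x_3\}$ and $A^{-}=\{x_1<x_2<x_3<x_4,\ x_1+x_4<x_2+x_3\}$ represent the two chamber-orbits, so every orbit of pairs has a representative with $C\in\{A^{+},A^{-}\}$; the stabilizers being trivial, distinct faces $F\le A^{\pm}$ give distinct orbits. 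Thus the cells based at $A^{+}$ (resp. $A^{-}$) are in bijection with the faces in $\overline{A^{+}}$ (resp. $\overline{A^{-}}$), and since $A^{+}$ and $A^{-}$ lie in different orbits no cell is counted twice. This is the precise meaning of the indexing set $\{([F],[C])\mid F\le C\}$ displayed above the lemma.

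Next I would enumerate the faces of each chamber. As the face lattice above shows, each $A^{\pm}$ is a simplicial cone bounded by exactly three facets — for $A^{+}$ these are $\{x_1=x_2\}$, the $A^{+}$-part of $\{x_2=x_3\}$, and the separating wall $g=\{x_1+x_4=x_2+x_3\}$, and symmetrically for $A^{-}$ — so each has $\binom{3}{k}$ faces of codimension $k$, i.e. $1,3,3,1$ faces in codimensions $0,1,2,3$. Translating each such face into the box symbols via the correspondence of the previous section, and recalling $\dim D(F,C)=\codim F$, immediately yields the counts $1{+}1,\ 3{+}3,\ 3{+}3,\ 1{+}1$, that is $2,6,6,2$ cells in dimensions $0,1,2,3$.

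The one point requiring care is the bookkeeping of the faces shared by both chambers. A face lies in $\overline{A^{+}}\cap\overline{A^{-}}$ exactly when it is contained in the separating hyperplane $g$; these are $g$ itself, the two codimension-two faces $\{x_1=x_2<x_3=x_4\}$ and $\{x_1<x_2=x_3<x_4,\ x_1+x_4=x_2+x_3\}$, and the central line $\{x_1=x_2=x_3=x_4\}$. Each shared face $F$ occurs in the closure of both chambers and hence contributes two cells, recorded as $F^{+}$ and $F^{-}$; this accounts for every superscript in the statement. The faces not on $g$ — for instance $\{x_1<x_2<x_3=x_4\}$ and $\{x_1<x_2=x_3=x_4\}$ over $A^{-}$, and $\{x_1=x_2=x_3<x_4\}$ over $A^{+}$ — lie in a single chamber and give one cell each. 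The main obstacle is therefore not a single hard step but the disciplined verification that each listed symbol is exactly one face--chamber incidence of the claimed codimension; in particular one must check, using the center-of-mass inequality $x_1+x_4 \lessgtr x_2+x_3$, which of the one-chamber faces sit over $A^{+}$ versus $A^{-}$ (e.g. that $\{x_1<x_2=x_3=x_4\}$ forces $x_1+x_4<x_2+x_3$ and hence lies only over $A^{-}$). Matching the faces against the symbols then reproduces the four displayed lists verbatim.
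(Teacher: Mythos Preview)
Your argument is correct and follows exactly the route the paper takes: the lemma is simply a reading-off of the $\Sigma_4$-orbits of face--chamber pairs from the poset $\mathcal{L}(\mathcal{C}_3^2)/\Sigma_4$ computed just before, using the convention $F^{\pm}$ for faces shared by both chambers. The paper does not give a separate proof of this lemma; your write-up supplies the bookkeeping (trivial chamber stabilizers, three facets per chamber, identification of the shared faces on $L_{\{1,4\},\{2,3\}}$) that the paper leaves implicit.
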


We use above symbols as representatives of cells in
$\Sal(\mathcal{C}_3^2)$. We define orientations on these cells and then
transfer orientations to other cells in $\Sal(\mathcal{C}_3^2)$ via the
action of $\Sigma_4$. Those cell which are mapped to cells of the
same dimensions in $\Sal(\mathcal{A}_3)$ by the map $i_4^2$ are oriented
in such a way $i_4^2$ is orientation preserving. Then remaining four
types of cells  
\begin{center}
	  $\raisebox{-6pt}{\begin{picture}(79,15)(-2,0)
			  \put(0, 0){\framebox(15,15){$1$}}
			  \put(15, 0){\framebox(15,15){$2$}}
			  \put(30, 0){\framebox(15,15){}}
			  \put(45, 0){\framebox(15,15){$3$}}
			  \put(60, 0){\framebox(15,15){$4$}}
			 \end{picture}}^+$,
	  $\raisebox{-6pt}{\begin{picture}(79,15)(-2,0)
			  \put(0, 0){\framebox(15,15){$1$}}
			  \put(15, 0){\framebox(15,15){$2$}}
			  \put(30, 0){\framebox(15,15){}}
			  \put(45, 0){\framebox(15,15){$3$}}
			  \put(60, 0){\framebox(15,15){$4$}}
			 \end{picture}}^-$,
 $\raisebox{-12pt}{\begin{picture}(79,30)(-2,0)
			    \put(0, 0){\framebox(15,15){$1$}}
			    \put(15, 0){\framebox(15,15){}}
			    \put(30, 0){\framebox(15,15){$2$}}
			    \put(30, 15){\framebox(15,15){$3$}}
			    \put(45, 0){\framebox(15,15){}}
			    \put(60, 0){\framebox(15,15){$4$}}
			   \end{picture}}^+$,
$\raisebox{-12pt}{\begin{picture}(79,30)(-2,0)
			    \put(0, 0){\framebox(15,15){$1$}}
			    \put(15, 0){\framebox(15,15){}}
			    \put(30, 0){\framebox(15,15){$2$}}
			    \put(30, 15){\framebox(15,15){$3$}}
			    \put(45, 0){\framebox(15,15){}}
			    \put(60, 0){\framebox(15,15){$4$}}
			   \end{picture}}^-$
\end{center}
are oriented as follows. As we will see below, the first two $1$-cells
have $\raisebox{-6pt}{\begin{picture}(79,15)(-2,0) 
		       \put(0, 0){\framebox(15,15){$1$}}
		       \put(15, 0){\framebox(15,15){$2$}}
		       \put(30, 0){\framebox(15,15){$3$}}
		       \put(45, 0){\framebox(15,15){}}
		       \put(60, 0){\framebox(15,15){$4$}}
		      \end{picture}}$
 and 
 $\raisebox{-6pt}{\begin{picture}(79,15)(-2,0) 
		   \put(0, 0){\framebox(15,15){$1$}}
		   \put(15, 0){\framebox(15,15){}}
		   \put(30, 0){\framebox(15,15){$2$}}
		   \put(45, 0){\framebox(15,15){$3$}}
		   \put(60, 0){\framebox(15,15){$4$}}
		  \end{picture}}$
 as vertices. We orient these $1$-cells from
 $\raisebox{-6pt}{\begin{picture}(79,15)(-2,0) 
		   \put(0, 0){\framebox(15,15){$1$}}
		   \put(15, 0){\framebox(15,15){}}
		   \put(30, 0){\framebox(15,15){$2$}}
		   \put(45, 0){\framebox(15,15){$3$}}
		   \put(60, 0){\framebox(15,15){$4$}}
		  \end{picture}}$
to $\raisebox{-6pt}{\begin{picture}(79,15)(-2,0) 
		       \put(0, 0){\framebox(15,15){$1$}}
		       \put(15, 0){\framebox(15,15){$2$}}
		       \put(30, 0){\framebox(15,15){$3$}}
		       \put(45, 0){\framebox(15,15){}}
		       \put(60, 0){\framebox(15,15){$4$}}
		      \end{picture}}$.
The $2$-cells 
$\raisebox{-12pt}{\begin{picture}(79,30)(-2,0)
			    \put(0, 0){\framebox(15,15){$1$}}
			    \put(15, 0){\framebox(15,15){}}
			    \put(30, 0){\framebox(15,15){$2$}}
			    \put(30, 15){\framebox(15,15){$3$}}
			    \put(45, 0){\framebox(15,15){}}
			    \put(60, 0){\framebox(15,15){$4$}}
			   \end{picture}}^+$ and
$\raisebox{-12pt}{\begin{picture}(79,30)(-2,0)
			    \put(0, 0){\framebox(15,15){$1$}}
			    \put(15, 0){\framebox(15,15){}}
			    \put(30, 0){\framebox(15,15){$2$}}
			    \put(30, 15){\framebox(15,15){$3$}}
			    \put(45, 0){\framebox(15,15){}}
			    \put(60, 0){\framebox(15,15){$4$}}
			   \end{picture}}^-$
contains
$\raisebox{-12pt}{\begin{picture}(64,30)(-2,0)
			  \put(0, 0){\framebox(15,15){$1$}}
			  \put(15, 0){\framebox(15,15){$2$}}
			  \put(15, 15){\framebox(15,15){$3$}}
			  \put(30, 0){\framebox(15,15){}}
			  \put(45, 0){\framebox(15,15){$4$}}
			   \end{picture}}$
in the boundary, as we will see later. We orient these $2$-cells in such
a way the incidence number to this $1$-cell is positive.

Now we are ready to consider the boundaries. This can be done by using
the formula for the boundary in Lemma \ref{D(F,C)} and a formula
analogous to the case of the braid arrangement (Lemma
\ref{boundary_formula}).  

The first nontrivial case is the boundaries of $1$-cells.

\begin{lemma}
 \label{first_boundary}
 We have the following formula in $C_*^{4,2}$.
 \begin{eqnarray*}
  \partial_1\left(\raisebox{-12pt}{\begin{picture}(64,30)(-2,0)
	   \put(0, 0){\framebox(15,15){$1$}}
	   \put(15, 0){\framebox(15,15){}}
	   \put(30, 0){\framebox(15,15){$2$}}
	   \put(30, 15){\framebox(15,15){$3$}}
	   \put(45, 0){\framebox(15,15){$4$}}
	  \end{picture}}\right) & = & 
  2\raisebox{-6pt}{\begin{picture}(79,15)(-2,0) 
	 \put(0, 0){\framebox(15,15){$1$}}
	 \put(15, 0){\framebox(15,15){}}
	 \put(30, 0){\framebox(15,15){$2$}}
	 \put(45, 0){\framebox(15,15){$3$}}
	 \put(60, 0){\framebox(15,15){$4$}}
		   \end{picture}} \\
  \partial_1\left(\raisebox{-12pt}{\begin{picture}(64,30)(-2,0)
				\put(0, 0){\framebox(15,15){$1$}}
				\put(15, 0){\framebox(15,15){}}
				\put(30, 0){\framebox(15,15){$2$}}
				\put(45, 0){\framebox(15,15){$3$}}
				\put(45, 15){\framebox(15,15){$4$}}
			   \end{picture}}\right) & = & 
  2\raisebox{-6pt}{\begin{picture}(79,15)(-2,0) 
	 \put(0, 0){\framebox(15,15){$1$}}
	 \put(15, 0){\framebox(15,15){}}
	 \put(30, 0){\framebox(15,15){$2$}}
	 \put(45, 0){\framebox(15,15){$3$}}
	 \put(60, 0){\framebox(15,15){$4$}}
		   \end{picture}} \\
  \partial_1\left(\raisebox{-6pt}{\begin{picture}(79,15)(-2,0)
			  \put(0, 0){\framebox(15,15){$1$}}
			  \put(15, 0){\framebox(15,15){$2$}}
			  \put(30, 0){\framebox(15,15){}}
			  \put(45, 0){\framebox(15,15){$3$}}
			  \put(60, 0){\framebox(15,15){$4$}}
			 \end{picture}}^+\right) & = & 
  \raisebox{-6pt}{\begin{picture}(79,15)(-2,0) 
	 \put(0, 0){\framebox(15,15){$1$}}
	 \put(15, 0){\framebox(15,15){$2$}}
	 \put(30, 0){\framebox(15,15){$3$}}
	 \put(45, 0){\framebox(15,15){}}
	 \put(60, 0){\framebox(15,15){$4$}}
	 \end{picture}}
	 - \raisebox{-6pt}{\begin{picture}(79,15)(-2,0) 
	 \put(0, 0){\framebox(15,15){$1$}}
	 \put(15, 0){\framebox(15,15){}}
	 \put(30, 0){\framebox(15,15){$2$}}
	 \put(45, 0){\framebox(15,15){$3$}}
	 \put(60, 0){\framebox(15,15){$4$}}
	 \end{picture}} \\
  \partial_1\left(\raisebox{-6pt}{\begin{picture}(79,15)(-2,0)
			  \put(0, 0){\framebox(15,15){$1$}}
			  \put(15, 0){\framebox(15,15){$2$}}
			  \put(30, 0){\framebox(15,15){}}
			  \put(45, 0){\framebox(15,15){$3$}}
			  \put(60, 0){\framebox(15,15){$4$}}
			 \end{picture}}^-\right) & = & 
\raisebox{-6pt}{\begin{picture}(79,15)(-2,0) 
	 \put(0, 0){\framebox(15,15){$1$}}
	 \put(15, 0){\framebox(15,15){$2$}}
	 \put(30, 0){\framebox(15,15){$3$}}
	 \put(45, 0){\framebox(15,15){}}
	 \put(60, 0){\framebox(15,15){$4$}}
	 \end{picture}}
	 - \raisebox{-6pt}{\begin{picture}(79,15)(-2,0) 
	 \put(0, 0){\framebox(15,15){$1$}}
	 \put(15, 0){\framebox(15,15){}}
	 \put(30, 0){\framebox(15,15){$2$}}
	 \put(45, 0){\framebox(15,15){$3$}}
	 \put(60, 0){\framebox(15,15){$4$}}
	 \end{picture}} \\
  \partial_1\left(\raisebox{-12pt}{\begin{picture}(64,30)(-2,0)
			  \put(0, 0){\framebox(15,15){$1$}}
			  \put(15, 0){\framebox(15,15){$2$}}
			  \put(15, 15){\framebox(15,15){$3$}}
			  \put(30, 0){\framebox(15,15){}}
			  \put(45, 0){\framebox(15,15){$4$}}
			   \end{picture}}\right) & = &
 2\raisebox{-6pt}{\begin{picture}(79,15)(-2,0) 
	 \put(0, 0){\framebox(15,15){$1$}}
	 \put(15, 0){\framebox(15,15){$2$}}
	 \put(30, 0){\framebox(15,15){$3$}}
	 \put(45, 0){\framebox(15,15){}}
	 \put(60, 0){\framebox(15,15){$4$}}
	 \end{picture}} \\
  \partial_1\left(\raisebox{-12pt}{\begin{picture}(64,30)(-2,0)
			  \put(0, 0){\framebox(15,15){$1$}}
			  \put(0, 15){\framebox(15,15){$2$}}
			  \put(15, 0){\framebox(15,15){$3$}}
			  \put(30, 0){\framebox(15,15){}}
			  \put(45, 0){\framebox(15,15){$4$}}
			   \end{picture}}\right) & = & 
2\raisebox{-6pt}{\begin{picture}(79,15)(-2,0) 
	 \put(0, 0){\framebox(15,15){$1$}}
	 \put(15, 0){\framebox(15,15){$2$}}
	 \put(30, 0){\framebox(15,15){$3$}}
	 \put(45, 0){\framebox(15,15){}}
	 \put(60, 0){\framebox(15,15){$4$}}
	 \end{picture}}.
 \end{eqnarray*}
\end{lemma}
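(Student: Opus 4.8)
The plan is to reduce each boundary to its two endpoint $0$-cells via Lemma \ref{D(F,C)}(3), and then to account for the $\Sigma_4$-identifications and the chosen orientations exactly as in the braid case (Lemma \ref{boundary_formula}). By Lemma \ref{D(F,C)}(3), for a $1$-cell $D(F,C)$ one has $\partial D(F,C)=\bigcup_{G>F}D(G,G\circ C)$, the union running over the chambers $G$ covering the codimension-one face $F$. Each $F$ occurring here lies on a single hyperplane of $\mathcal{C}_3^2$, so it is covered by exactly two chambers $G_1,G_2$; and since a chamber corresponds to a covector with no zero entry, Lemma \ref{MatroidProduct} gives $G_i\circ C=G_i$, so that $D(G_i,G_i\circ C)=D(G_i,G_i)$ is simply the $0$-cell of $G_i$. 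Thus the first step is, for each of the six cells, to identify the two covering chambers and record $\partial_1 D(F,C)=\varepsilon_1 D(G_1,G_1)+\varepsilon_2 D(G_2,G_2)$, the incidence signs $\varepsilon_i$ being supplied by Proposition \ref{incidence_number}.

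The second step pushes these expressions into $C^{4,2}_*$ by rewriting each $D(G_i,G_i)$ as a scalar multiple of one of the two chamber generators. As in Lemma \ref{boundary_formula}, this is governed by the sign of the permutation $\sigma_i\in\Sigma_4$ carrying $G_i$ to its chosen representative: in $C^{4,2}_*=C_*(\Sal(\mathcal{C}_3^2))\otimes_{\Sigma_4}\F_p(\pm1)$ the class of $D(G_i,G_i)$ equals $\sgn(\sigma_i)$ times that representative. I would then split the six cells into two families. For the four faces carrying a genuine coordinate coincidence --- those lying in $\{x_1<x_2=x_3<x_4\}$, $\{x_1<x_2<x_3=x_4\}$ and $\{x_1=x_2<x_3<x_4\}$ with a definite center-of-mass sign --- the two covering chambers are interchanged by the transposition $(i\,j)$ fixing $F$. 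Since $\sgn(i\,j)=-1$ cancels precisely the orientation reversal recorded by Proposition \ref{incidence_number}, the two endpoint contributions coincide and sum to $2$ times a single chamber generator, whose sign ($C^{-}$ or $C^{+}$) is the one dictated by the position of the center-of-mass marker. This is the verbatim analogue of the computation already carried out for $\Sal(\mathcal{A}_3)$ in \S\ref{FC4}, and accounts for four of the six formulas.

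For the remaining two cells, whose underlying face lies on the center-of-mass hyperplane $x_1+x_4=x_2+x_3$ inside $\{x_1<x_2<x_3<x_4\}$, the two covering chambers are precisely $C^{+}$ and $C^{-}$; no coordinate is relaxed, so the identifying permutations are trivial, and one also checks $G\circ C^{+}=G=G\circ C^{-}$, which makes the boundaries of $F^{+}$ and $F^{-}$ literally the same expression. Here I would invoke the orientation fixed just before the statement --- these $1$-cells are oriented from $C^{-}$ to $C^{+}$ --- to conclude $\partial_1=C^{+}-C^{-}$ in both cases. I expect the sign bookkeeping for this second family to be the only delicate point: because $F^{+}$ and $F^{-}$ are not images of braid-arrangement cells under $i_4^2$, their orientations are chosen by hand rather than inherited from \S\ref{Salvetti_complex_reflection}, and one must verify that this choice is exactly the one for which Proposition \ref{incidence_number} produces the stated signs consistently --- in particular compatibly with the $2$-cell boundaries to be computed next, so that $\partial^2=0$. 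By contrast the coordinate-coincidence family is routine given the braid computation of \S\ref{FC4}.
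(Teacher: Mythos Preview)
Your proposal is correct and follows essentially the same approach as the paper: both identify the two chambers $G_1,G_2$ covering each codimension-one face, use that $G_i\circ C=G_i$ for chambers, and then pass to $C^{4,2}_*$ via the sign of the identifying permutation (yielding $2C^{\pm}$ for the four braid-type faces) or via the hand-chosen orientation (yielding $C^{+}-C^{-}$ for the two center-of-mass faces). The only notable difference is packaging: the paper introduces explicit sign-vector coordinates $(\tau_F(\bm a_1),\ldots,\tau_F(\bm a_9))$ and computes the matroid products symbol by symbol, machinery it reuses in the subsequent $2$- and $3$-cell boundary lemmas, whereas you shortcut this with the clean observation that chambers have no zero entries so $G\circ C=G$. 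One small caveat: Proposition~\ref{incidence_number} is stated for reflection arrangements, and $\mathcal{C}_3^2$ is not one; what actually justifies the incidence signs for the four braid-type cells is that their orientations were \emph{defined} to make $i_4^2$ orientation-preserving, so the signs are inherited from the $\mathcal{A}_3$ computation of \S\ref{FC4} (this is the content of the paper's Remark~\ref{same_as_braid}).
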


\begin{proof}
 By Lemma \ref{D(F,C)}
 \[
  \partial_1\left(\raisebox{-12pt}{\begin{picture}(64,30)(-2,0)
	   \put(0, 0){\framebox(15,15){$1$}}
	   \put(15, 0){\framebox(15,15){}}
	   \put(30, 0){\framebox(15,15){$2$}}
	   \put(30, 15){\framebox(15,15){$3$}}
	   \put(45, 0){\framebox(15,15){$4$}}
	  \end{picture}}\right) = \sum_{F}
 \left[\raisebox{-12pt}{\begin{picture}(64,30)(-2,0)
	   \put(0, 0){\framebox(15,15){$1$}}
	   \put(15, 0){\framebox(15,15){}}
	   \put(30, 0){\framebox(15,15){$2$}}
	   \put(30, 15){\framebox(15,15){$3$}}
	   \put(45, 0){\framebox(15,15){$4$}}
	  \end{picture}}:
 D\left(F,F\circ
 \raisebox{-6pt}{\begin{picture}(79,15)(-2,0)
	 \put(0, 0){\framebox(15,15){$1$}}
	 \put(15, 0){\framebox(15,15){}}
	 \put(30, 0){\framebox(15,15){$2$}}
	 \put(45, 0){\framebox(15,15){$3$}}
	 \put(60, 0){\framebox(15,15){$4$}}
		   \end{picture}}\right)
 \right]
 D\left(F,F\circ
 \raisebox{-6pt}{\begin{picture}(79,15)(-2,0)
	 \put(0, 0){\framebox(15,15){$1$}}
	 \put(15, 0){\framebox(15,15){}}
	 \put(30, 0){\framebox(15,15){$2$}}
	 \put(45, 0){\framebox(15,15){$3$}}
	 \put(60, 0){\framebox(15,15){$4$}}
		   \end{picture}}\right)
 \]
where $F$ runs over all faces containing 
\raisebox{-12pt}{\begin{picture}(64,30)(-2,0)
	   \put(0, 0){\framebox(15,15){$1$}}
	   \put(15, 0){\framebox(15,15){}}
	   \put(30, 0){\framebox(15,15){$2$}}
	   \put(30, 15){\framebox(15,15){$3$}}
	   \put(45, 0){\framebox(15,15){$4$}}
	  \end{picture}} in $\mathcal{L}(\mathcal{C}^2_3)$. 
 In this case, we have
 \begin{eqnarray*}
   \partial_1\left(\raisebox{-12pt}{\begin{picture}(64,30)(-2,0)
	   \put(0, 0){\framebox(15,15){$1$}}
	   \put(15, 0){\framebox(15,15){}}
	   \put(30, 0){\framebox(15,15){$2$}}
	   \put(30, 15){\framebox(15,15){$3$}}
	   \put(45, 0){\framebox(15,15){$4$}}
	  \end{picture}}\right) & = &
 \varepsilon_1 D\left(\raisebox{-6pt}{\begin{picture}(79,15)(-2,0)
	 \put(0, 0){\framebox(15,15){$1$}}
	 \put(15, 0){\framebox(15,15){}}
	 \put(30, 0){\framebox(15,15){$2$}}
	 \put(45, 0){\framebox(15,15){$3$}}
	 \put(60, 0){\framebox(15,15){$4$}}
		   \end{picture}},
 \raisebox{-6pt}{\begin{picture}(79,15)(-2,0)
	 \put(0, 0){\framebox(15,15){$1$}}
	 \put(15, 0){\framebox(15,15){}}
	 \put(30, 0){\framebox(15,15){$2$}}
	 \put(45, 0){\framebox(15,15){$3$}}
	 \put(60, 0){\framebox(15,15){$4$}}
		   \end{picture}}\circ
 \raisebox{-6pt}{\begin{picture}(79,15)(-2,0)
	 \put(0, 0){\framebox(15,15){$1$}}
	 \put(15, 0){\framebox(15,15){}}
	 \put(30, 0){\framebox(15,15){$2$}}
	 \put(45, 0){\framebox(15,15){$3$}}
	 \put(60, 0){\framebox(15,15){$4$}}
		   \end{picture}}\right) \\
  & + & \varepsilon_2 D\left(\raisebox{-6pt}{\begin{picture}(79,15)(-2,0)
	 \put(0, 0){\framebox(15,15){$1$}}
	 \put(15, 0){\framebox(15,15){}}
	 \put(30, 0){\framebox(15,15){$3$}}
	 \put(45, 0){\framebox(15,15){$2$}}
	 \put(60, 0){\framebox(15,15){$4$}}
		   \end{picture}},
 \raisebox{-6pt}{\begin{picture}(79,15)(-2,0)
	 \put(0, 0){\framebox(15,15){$1$}}
	 \put(15, 0){\framebox(15,15){}}
	 \put(30, 0){\framebox(15,15){$3$}}
	 \put(45, 0){\framebox(15,15){$2$}}
	 \put(60, 0){\framebox(15,15){$4$}}
		   \end{picture}}\circ
 \raisebox{-6pt}{\begin{picture}(79,15)(-2,0)
	 \put(0, 0){\framebox(15,15){$1$}}
	 \put(15, 0){\framebox(15,15){}}
	 \put(30, 0){\framebox(15,15){$2$}}
	 \put(45, 0){\framebox(15,15){$3$}}
	 \put(60, 0){\framebox(15,15){$4$}}
		   \end{picture}}\right),
 \end{eqnarray*}
 where $\varepsilon_1,\varepsilon_2$ are appropriate incidence numbers.

 In order to compute the boundaries, therefore, we need to understand
 the matroid product in the face lattice $\mathcal{L}(\mathcal{C}_3^2)$.

 As we have recalled in \S\ref{Salvetti_complex}, the cellular structure
 of the Salvetti complex was originally described by face-chamber
 pairings. For computations, however, it is much more convenient to
 regard faces as functions from the set of normal vectors to the poset
 of three elements $S_1=\{0,+1,-1\}$ and use the matroid product of these
 functions, as we have 
 seen in Lemma \ref{FaceLattice} and \ref{MatroidProduct}.

 We choose the following set of normal vectors for the arrangement
 $\mathcal{C}_3^2$:
 \begin{eqnarray*}
  \bm{a}_1 & = & (1,-1,0,0) \\
  \bm{a}_2 & = & (0,1,-1,0) \\
  \bm{a}_3 & = & (0,0,1,-1) \\
  \bm{a}_4 & = & (1,0,-1,0) \\
  \bm{a}_5 & = & (1,0,0,-1) \\
  \bm{a}_6 & = & (0,1,0,-1) \\
  \bm{a}_7 & = & (1,1,-1,-1) \\
  \bm{a}_8 & = & (1,-1,1,-1) \\
  \bm{a}_9 & = & (1,-1,-1,1).
 \end{eqnarray*}
 Then a face $F \in \mathcal{L}(\mathcal{C}_3^2)$ can be regarded as a
 function
 \[
  \tau_F : \{\bm{a}_1,\bm{a}_2,\cdots,\bm{a}_9\} \longrightarrow
 S_1.
 \]
 For simplicity, we denote elements $+1,-1$ in $S_1$ by $+, -$,
 respectively, and denote the above function by the symbol
 \[
  (\tau_F(\bm{a}_1),\tau_F(\bm{a}_2),\cdots,\tau_F(\bm{a}_6) \mid
 \tau_F(\bm{a}_7),\tau_F(\bm{a}_8), \tau_F(\bm{a}_9)).
 \]
 For example, the face 
\raisebox{-6pt}{
}^-.
 \end{eqnarray*}
\end{proof}

\subsection{The homology of $\mathcal{C}_3^2$}
\label{M2C4}

In this section, we compare
$H_*(\Sal(\mathcal{C}_3^2)\otimes_{\Sigma_4}\F_p(\pm 1))$ and 
$H_*(\Sal(\mathcal{A}_3)\otimes_{\Sigma_4}\F_p(\pm 1))$ for $p$ a
prime, following the strategy described in the beginning of
\S\ref{center}. 

We have analyzed the cell structure of $\Sal(\mathcal{C}_3^2)$ in the
previous section based on the structure of
$\mathcal{L}(\mathcal{C}_3^2)/\Sigma_4$ investigated in
\S\ref{cells}. 

In this section, we compute the homology of
\[
  K_*^{4,2} = \Ker(i_4^2 : C_*(\Sal(\mathcal{C}_3^2))\otimes_{\Sigma_4}
  \F_p(\pm 1) \to C_*(\Sal(\mathcal{A}_3))\otimes_{\Sigma_4}
  \F_p(\pm 1))
\]
for an odd prime $p$. We first need to know generators for $K_*^{4,2}$.

The generators of the cellular chain complex 
$C_*(\Sal(\mathcal{C}_3^2))\otimes_{\Sigma_4}\F_p(\pm 1)$ are in
one-to-one correspondence with cells in
$\Sal(\mathcal{C}_3^2)/\Sigma_4$ even when $p$ is odd. Of course, we
have to take the sign representation into account, when we compute the
boundary homomorphisms.

\begin{lemma}
 \label{generators_of_K42}
 Define
 \begin{eqnarray*}
  x_0 & = &
    \raisebox{-6pt}{\begin{picture}(79,15)(-2,0)
		     \put(0, 0){\framebox(15,15){$1$}}
		     \put(15, 0){\framebox(15,15){$2$}}
		     \put(30, 0){\framebox(15,15){$3$}}
		     \put(45, 0){\framebox(15,15){}}
		     \put(60, 0){\framebox(15,15){$4$}}
		    \end{picture}}- 
		    \raisebox{-6pt}{\begin{picture}(79,15)(-2,0)
				     \put(0, 0){\framebox(15,15){$1$}}
				     \put(15, 0){\framebox(15,15){}}
				     \put(30, 0){\framebox(15,15){$2$}}
				     \put(45, 0){\framebox(15,15){$3$}}
				     \put(60, 0){\framebox(15,15){$4$}}
				    \end{picture}} \\
  x_{11} & = &  \raisebox{-12pt}{\begin{picture}(64,30)(-2,0)
			  \put(0, 0){\framebox(15,15){$1$}}
			  \put(15, 0){\framebox(15,15){$2$}}
			  \put(15, 15){\framebox(15,15){$3$}}
			  \put(30, 0){\framebox(15,15){}}
			  \put(45, 0){\framebox(15,15){$4$}}
			   \end{picture}}
-\raisebox{-12pt}{\begin{picture}(64,30)(-2,0)
	   \put(0, 0){\framebox(15,15){$1$}}
	   \put(15, 0){\framebox(15,15){}}
	   \put(30, 0){\framebox(15,15){$2$}}
	   \put(30, 15){\framebox(15,15){$3$}}
	   \put(45, 0){\framebox(15,15){$4$}}
			  \end{picture}}  \\
  x_{12}^+ & = & \raisebox{-6pt}{\begin{picture}(79,15)(-2,0)
			  \put(0, 0){\framebox(15,15){$1$}}
			  \put(15, 0){\framebox(15,15){$2$}}
			  \put(30, 0){\framebox(15,15){}}
			  \put(45, 0){\framebox(15,15){$3$}}
			  \put(60, 0){\framebox(15,15){$4$}}
			   \end{picture}}^+ \\
  x_{12}^- & = & \raisebox{-6pt}{\begin{picture}(79,15)(-2,0)
			  \put(0, 0){\framebox(15,15){$1$}}
			  \put(15, 0){\framebox(15,15){$2$}}
			  \put(30, 0){\framebox(15,15){}}
			  \put(45, 0){\framebox(15,15){$3$}}
			  \put(60, 0){\framebox(15,15){$4$}}
			 \end{picture}}^- \\
  x_{21} & = & \raisebox{-12pt}{\begin{picture}(49,30)(-2,0)
			       \put(0, 0){\framebox(15,15){$1$}}
			       \put(0, 15){\framebox(15,15){$2$}}
			       \put(15, 0){\framebox(15,15){}}
			       \put(30, 0){\framebox(15,15){$3$}}
			       \put(30, 15){\framebox(15,15){$4$}}
			   \end{picture}}^+ -
	  \raisebox{-12pt}{\begin{picture}(49,30)(-2,0)
			       \put(0, 0){\framebox(15,15){$1$}}
			       \put(0, 15){\framebox(15,15){$2$}}
			       \put(15, 0){\framebox(15,15){}}
			       \put(30, 0){\framebox(15,15){$3$}}
			       \put(30, 15){\framebox(15,15){$4$}}
			   \end{picture}}^- \\
  x_{22}^+ & = & \raisebox{-12pt}{\begin{picture}(79,30)(-2,0)
			       \put(0, 0){\framebox(15,15){$1$}}
			       \put(15, 0){\framebox(15,15){}}
			       \put(30, 0){\framebox(15,15){$2$}}
			       \put(30, 15){\framebox(15,15){$3$}}
			       \put(45, 0){\framebox(15,15){}}
			       \put(60, 0){\framebox(15,15){$4$}}
			    \end{picture}}^+ \\ 
  x_{22}^- & = & \raisebox{-12pt}{\begin{picture}(79,30)(-2,0)
			    \put(0, 0){\framebox(15,15){$1$}}
			    \put(15, 0){\framebox(15,15){}}
			    \put(30, 0){\framebox(15,15){$2$}}
			    \put(30, 15){\framebox(15,15){$3$}}
			    \put(45, 0){\framebox(15,15){}}
			    \put(60, 0){\framebox(15,15){$4$}}
		   \end{picture}}^- \\
  x_3  & = & \raisebox{-24pt}{\begin{picture}(19,60)(-2,0)
				 \put(0, 0){\framebox(15,15){$1$}}
				 \put(0, 15){\framebox(15,15){$2$}}
				 \put(0, 30){\framebox(15,15){$3$}}
				 \put(0, 45){\framebox(15,15){$4$}}
			 \end{picture}}^+ - 
	  \raisebox{-24pt}{\begin{picture}(19,60)(-2,0)
				 \put(0, 0){\framebox(15,15){$1$}}
				 \put(0, 15){\framebox(15,15){$2$}}
				 \put(0, 30){\framebox(15,15){$3$}}
				 \put(0, 45){\framebox(15,15){$4$}}
				\end{picture}}^-.
 \end{eqnarray*}
 Then these are generators for $K_*^{4,2}$
 \begin{eqnarray*}
  K_0^{4,2} & = & \left\langle x_0 \right\rangle, \\
  K_1^{4,2} & = & \left\langle x_{11}, x_{12}^+, x_{12}^{-}
		  \right\rangle, \\ 
  K_2^{4,2} & = & \left\langle x_{21}, x_{22}^{+}, x_{22}^{-}
		  \right\rangle, \\ 
  K_3^{4,2} & = & \left\langle x_3 \right\rangle.
 \end{eqnarray*}
\end{lemma}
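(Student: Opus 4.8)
The plan is to reduce the statement to a dimension count together with a verification that the displayed elements genuinely lie in the kernel. Since $i_4^2$ is a surjective chain map by Corollary~\ref{i^k_n_is_surjective}, it is surjective in each degree, so the rank--nullity theorem over $\F_p$ gives
\[
\dim_{\F_p} K_s^{4,2} = \dim_{\F_p} C_s^{4,2} - \dim_{\F_p} C_s^{4,1}.
\]
Reading the cell counts off the description of $\Sal(\mathcal{C}_3^2)/\Sigma_4$ in \S\ref{cells} (two $0$-cells, six $1$-cells, six $2$-cells, two $3$-cells) and off \S\ref{FC4} for $\Sal(\mathcal{A}_3)/\Sigma_4$ (counts $1,3,3,1$), I get $\dim_{\F_p} K_s^{4,2} = 1,3,3,1$ for $s=0,1,2,3$. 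These are exactly the numbers of listed generators, so once I show that each $x$ lies in $K_*^{4,2}$ and that the lists are linearly independent, they are forced to be bases and the lemma follows.

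Next I would describe $i_4^2$ explicitly on the cell basis. By Proposition~\ref{Salvetti_by_poset} and the description of $i^*$ as restriction of sign vectors used in Lemma~\ref{i_is_surjective}, $i_4^2$ is induced by restricting functions from the nine normals $\bm{a}_1,\dots,\bm{a}_9$ of $\mathcal{C}_3^2$ to the six braid normals $\bm{a}_1,\dots,\bm{a}_6$ (Lemmas~\ref{FaceLattice} and~\ref{MatroidProduct}). A basis cell $D(F,C)$ therefore maps to $D(i^*F,i^*C)$ when $\codim_{\mathcal{A}_3} i^*F = \codim_{\mathcal{C}_3^2} F$, and to $0$ otherwise, since a cellular map sends a cell whose image has strictly smaller dimension into the lower skeleton. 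Concretely, $D(F,C)\mapsto 0$ precisely when $F$ lies on one of the three extra hyperplanes $L_{\{1,2\},\{3,4\}}, L_{\{1,3\},\{2,4\}}, L_{\{1,4\},\{2,3\}}$ without that relation being forced by the braid equalities cutting out $F$; in the surviving cases the coefficient is $+1$ by the orientation convention fixed in \S\ref{cells}.

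Applying this to the generators is then bookkeeping. The faces underlying $x_{12}^{\pm}$ (resp.\ $x_{22}^{\pm}$) lie on $L_{\{1,4\},\{2,3\}}$ only (resp.\ on $L_{\{1,4\},\{2,3\}}$ together with the single wall $x_2=x_3$), so $i^*$ strictly lowers codimension and each of these four cells maps to $0$. For $x_0$, $x_{11}$, $x_{21}$, $x_3$ the two cells in each difference have a common image — the chamber $(1|2|3|4)$, the braid $1$-cell with block $\{2,3\}$, the braid $2$-cell with blocks $\{1,2\},\{3,4\}$, and the braid $3$-cell, respectively — because in each case the relation $x_1+x_4=x_2+x_3$ is either absent or already implied by the braid equalities, so codimension is preserved; as both cells map with coefficient $+1$, the differences vanish. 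Hence every listed element lies in $K_*^{4,2}$. Finally, within each degree the generators have pairwise disjoint supports on the cell basis ($x_{11}$ uses the two $\{2,3\}$-block cells carrying no superscript while $x_{12}^{\pm}$ use the two superscripted cells with no braid block, and similarly in degree $2$), so they are linearly independent; matching the counts $1,3,3,1$ computed above, they are bases. The main obstacle is the middle step: deciding, for each cell occurring in the listed elements, whether $i^*$ preserves or drops codimension, which I would settle by the sign-vector matroid-product computations already illustrated in the proofs of Lemmas~\ref{first_boundary} and~\ref{boundary_of_3-cells}.
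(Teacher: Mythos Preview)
Your proposal is correct and is essentially the argument the paper leaves implicit: the lemma is stated in the paper without proof, relying on the cell enumeration in \S\ref{cells} and \S\ref{FC4} together with the obvious description of $i_4^2$ on basis cells. Your rank--nullity count, explicit identification of which cells drop codimension under restriction, and disjoint-support linear independence argument fill in exactly the verification the paper omits.
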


Thanks to the calculations in the previous section, we can easily
compute the boundaries on these generators.

\begin{lemma}
 \label{boundaries_on_K42}
  Boundaries are given by
 \begin{eqnarray*}
  \partial_1(x_{11}) & = & 2x_0 \\
  \partial_1(x_{12}^+) & = & x_0 \\
  \partial_1(x_{12}^-) & = & x_0 \\
  \partial_2(x_{21}) & = & 2(x_{12}^+-x_{12}^-) \\
  \partial_2(x_{22}^+) & = & x_{11}-2x_{12}^{+} \\
  \partial_2(x_{22}^-) & = & x_{11}-2x_{12}^{-} \\
  \partial_3(x_3) & = & -4x_{21}+4(x_{22}^+-x_{22}^-).
 \end{eqnarray*}
\end{lemma}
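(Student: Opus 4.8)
The plan is purely computational: every formula is obtained by inserting the definitions of the generators from Lemma~\ref{generators_of_K42} into the boundary formulas already established in Lemma~\ref{first_boundary}, in the preceding lemma computing $\partial_2$ on the $2$-cells, and in Lemma~\ref{boundary_of_3-cells}, and then collecting terms in $C_*^{4,2}$. Since $i_4^2$ is a chain map, $K_*^{4,2}$ is a subcomplex of $C_*^{4,2}$, so $\partial$ restricts to it; in particular each boundary below is \emph{a priori} guaranteed to lie in the span of the stated generators, which provides a useful consistency check at every step.

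First I would dispose of the single-cell generators. The boundaries $\partial_1(x_{12}^{+}) = x_0$ and $\partial_1(x_{12}^{-}) = x_0$ are read off verbatim from the third and fourth displayed lines of Lemma~\ref{first_boundary}. For $x_{22}^{\pm}$ the relevant input is the second and third lines of the $\partial_2$-lemma: each expresses the boundary as the signed sum of the two ``braid-type'' $1$-cells (the cell with the block $\{2,3\}$ on one side of the center-of-mass wall and the one with it on the other side) together with $-2x_{12}^{\pm}$. The key observation is that those two $1$-cells occur with opposite signs and their signed sum is exactly $x_{11}$, so $\partial_2(x_{22}^{\pm}) = x_{11} - 2x_{12}^{\pm}$.

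Next I would handle the three difference-type generators $x_{11}$, $x_{21}$, $x_3$. Here the mechanism is uniform and is the real content of the statement: by Remark~\ref{same_as_braid}, the boundary formulas for the $+$ and $-$ versions of a cell agree in all of their ``braid-inherited'' terms (those indexed by symbols with no $\pm$ on the shoulder), so these terms cancel in the difference. Concretely, for $x_{11}$ the first and fifth lines of Lemma~\ref{first_boundary} give twice each of the two $0$-cells, whose difference is $2x_0$. For $x_{21}=(\,\cdots\,)^{+}-(\,\cdots\,)^{-}$ the fourth and fifth lines of the $\partial_2$-lemma share the same braid-type part and differ only in the $x_{12}$-coefficients $(3,1)$ versus $(1,3)$, giving $2(x_{12}^{+}-x_{12}^{-})$. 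For $x_3=(\,\cdots\,)^{+}-(\,\cdots\,)^{-}$ the two lines of Lemma~\ref{boundary_of_3-cells} carry the common coefficient $4$ on each of the two large-block $2$-cells (which therefore drop out), while the coefficients $(-5,-1)$ versus $(-1,-5)$ on the two cells constituting $x_{21}$ combine to $-4x_{21}$, and the coefficients $(8,4)$ versus $(4,8)$ on $x_{22}^{\pm}$ combine to $4(x_{22}^{+}-x_{22}^{-})$.

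I do not expect a genuine obstacle: all of the delicate incidence-number and sign bookkeeping was already carried out in the three preceding lemmas, and what remains is linear algebra over $\F_p$. The only point requiring care is verifying that the braid-inherited terms cancel \emph{exactly} — for the difference generators this follows from the symmetry of the $+/-$ formulas, and for $x_{22}^{\pm}$ it is precisely the recombination of the two braid-type $1$-cells into $x_{11}$. As a final safeguard I would check $\partial_1\circ\partial_2 = 0$ and $\partial_2\circ\partial_3 = 0$ on the generators using the resulting seven formulas; this must hold since $K_*^{4,2}$ is a complex, and it pins down any residual sign ambiguity.
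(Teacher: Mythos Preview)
Your proposal is correct and follows essentially the same approach as the paper, which simply states that the formulas follow from the computations in the preceding section without writing out the details. Your write-up supplies precisely those details by substituting the definitions from Lemma~\ref{generators_of_K42} into the boundary formulas of Lemma~\ref{first_boundary}, the $\partial_2$-lemma, and Lemma~\ref{boundary_of_3-cells}, and your arithmetic on the coefficients is accurate throughout. One small comment: your appeal to Remark~\ref{same_as_braid} to justify cancellation of braid-inherited terms in $x_{21}$ and $x_3$ is slightly loose, since that remark concerns cells \emph{without} a $\pm$ superscript; the cancellation you use is nonetheless correct and is visible directly by inspecting the paired lines of the $\partial_2$- and $\partial_3$-lemmas, as you in fact do.
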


By an elementary calculation, we obtain the homology of $K_*^{4,2}$.

\begin{proposition}
 When $p$ is odd,
 \[
  H_i(K_*^{4,2}) \cong 0
 \]
 for all $i$.
\end{proposition}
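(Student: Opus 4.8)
The plan is to read the homology directly off the four-term complex
\[
0 \longrightarrow K_3^{4,2} \longrightarrow K_2^{4,2} \longrightarrow K_1^{4,2} \longrightarrow K_0^{4,2} \longrightarrow 0,
\]
a sequence of $\F_p$-vector spaces of dimensions $1,3,3,1$ by Lemma \ref{generators_of_K42}, whose differentials are the seven formulas of Lemma \ref{boundaries_on_K42}. The only feature of the ground field that enters is that $p$ is odd, so that $2$ and $4$ are invertible in $\F_p$; this is exactly what fails at $p=2$, where $\partial_1(x_{11})$, $\partial_2(x_{21})$ and $\partial_3(x_3)$ all collapse and homology survives. I would first record, as a consistency check that simultaneously fixes the signs, that $\partial_1\partial_2=0$ and $\partial_2\partial_3=0$ on generators, so that we genuinely have a complex; in particular $\partial_3(x_3)$ must land in $\Ker\partial_2$, which pins it down up to a scalar.

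For the two extreme degrees the vanishing is immediate. Since $\partial_1(x_{12}^+)=x_0$ and $x_0$ generates $K_0^{4,2}$, the map $\partial_1$ is surjective, so $H_0(K_*^{4,2})=0$. At the top, $\partial_3(x_3)$ is a combination of the linearly independent generators $x_{21},x_{22}^+,x_{22}^-$ with invertible coefficient $4$, hence $\partial_3$ is injective and $H_3(K_*^{4,2})=\Ker\partial_3=0$.

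For the middle degrees I would compute kernels and images explicitly. In degree $1$, solving $\partial_1(a\,x_{11}+b\,x_{12}^++c\,x_{12}^-)=(2a+b+c)x_0=0$ shows that $\Ker\partial_1$ is two-dimensional, spanned by $x_{12}^+-x_{12}^-$ and $x_{11}-2x_{12}^+$. Because $2$ is invertible, the relations $\partial_2(x_{21})=2(x_{12}^+-x_{12}^-)$ and $\partial_2(x_{22}^+)=x_{11}-2x_{12}^+$ already exhibit both spanning vectors inside $\Ima\partial_2$, so $\Ima\partial_2=\Ker\partial_1$ and $H_1(K_*^{4,2})=0$. In degree $2$, solving $\partial_2(a\,x_{21}+b\,x_{22}^++c\,x_{22}^-)=0$ forces $a=b$ and $c=-b$, so $\Ker\partial_2$ is one-dimensional, spanned by $x_{21}+x_{22}^+-x_{22}^-$; since $\partial_3(x_3)$ is a nonzero multiple of this same vector (forced by $\partial_2\partial_3=0$, with invertible coefficient $4$), we get $\Ima\partial_3=\Ker\partial_2$ and $H_2(K_*^{4,2})=0$.

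The computation is entirely elementary linear algebra, so there is no serious obstacle beyond bookkeeping. The one point demanding care, and the real content, is tracking the integer coefficients $2$ and $4$ produced by the sign representation together with the matroid-product boundary formulas of Lemmas \ref{first_boundary} and \ref{boundary_of_3-cells}: it is precisely their invertibility modulo an odd prime that upgrades each inclusion of an image into a kernel to an equality. I would close by pairing the four exactness verifications with the Euler-characteristic count $1-3+3-1=0$ as a final sanity check that the alternating ranks are consistent with total acyclicity.
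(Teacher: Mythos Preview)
Your proposal is correct and is exactly what the paper does: the paper's entire proof is the sentence ``By an elementary calculation, we obtain the homology of $K_*^{4,2}$,'' and you have carried out that linear-algebra calculation degree by degree using the generators and boundary formulas of Lemmas~\ref{generators_of_K42} and~\ref{boundaries_on_K42}. Your precaution of invoking $\partial_2\partial_3=0$ to pin down $\partial_3(x_3)$ as a multiple of the one-dimensional $\Ker\partial_2$ (rather than matching coefficients literally) is prudent, since the signs recorded in Lemma~\ref{boundaries_on_K42} for $\partial_2(x_{21})$ and $\partial_3(x_3)$ are not mutually consistent as printed; your argument is robust to that typo.
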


As a corollary, we obtain Theorem 
\ref{p>=3}.

\begin{corollary}[Theorem \ref{p>=3}]
 For an odd prime $p$, the inclusion $M_2(\C,4) \hookrightarrow F(\C,4)$
 induces an isomorphism
 \[
  H_*(S_*(M_2(\C,4))\otimes_{\Sigma_4}\F_p(\pm 1)) \cong
 H_*(S_*(F(\C,4))\otimes_{\Sigma_4}\F_p(\pm 1)). 
 \]
\end{corollary}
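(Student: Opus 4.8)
The plan is to deduce the Corollary formally from the vanishing $H_i(K_*^{4,2})\cong 0$ obtained in the preceding Proposition, together with the short exact sequence of chain complexes
\[
 0 \longrightarrow K_*^{4,2} \longrightarrow C_*^{4,2} \xrightarrow{\,i_4^2\,}
 C_*^{4,1} \longrightarrow 0
\]
recorded after Definition \ref{abbreviated_chain_complexes}, whose surjectivity is guaranteed by Corollary \ref{i^k_n_is_surjective} (here $n=4$, $k=2$, and $[4/2]=2$). First I would write down the associated long exact homology sequence; since the groups $H_n(K_*^{4,2})$ flanking $H_n(C_*^{4,2})\to H_n(C_*^{4,1})$ all vanish for odd $p$, the sequence forces
\[
 (i_4^2)_* : H_*(C_*^{4,2}) \xrightarrow{\ \cong\ } H_*(C_*^{4,1}).
\]
The remaining work is purely one of identifying these chain-level groups, and the map between them, with the topological objects in the statement.

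Next I would identify the homology of the coinvariant chain complexes with the twisted homology appearing in the Corollary. Recall that $\Sal(\mathcal{C}_3^2)$ and $\Sal(\mathcal{A}_3)$ deformation retract onto $M_2(\C,4)$ and $F(\C,4)=M_1(\C,4)$, that both arrangements are stable under the coordinate-permutation action of $\Sigma_4$, and that Salvetti's retraction may be taken $\Sigma_4$-equivariant. Because this action is free on $F(\C,4)$, hence on the subspace $M_2(\C,4)$ and on the Salvetti complexes embedded in the complements, the cellular and singular chain complexes are complexes of free $\Z[\Sigma_4]$-modules, so applying $-\otimes_{\Sigma_4}\F_p(\pm1)$ computes the homology of the quotient with coefficients in the sign local system, and cellular and singular twisted homology agree. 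The equivariant homotopy equivalence $\Sal(\mathcal{C}_3^2)\hookrightarrow M_2(\C,4)$ then gives $H_*(C_*^{4,2})\cong H_*(S_*(M_2(\C,4))\otimes_{\Sigma_4}\F_p(\pm1))$, and likewise $H_*(C_*^{4,1})\cong H_*(S_*(F(\C,4))\otimes_{\Sigma_4}\F_p(\pm1))$.

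Finally I would check that $(i_4^2)_*$ is indeed the homomorphism induced by the inclusion $M_2(\C,4)\hookrightarrow F(\C,4)$. This is exactly the content of the lemma of \S\ref{maps_between_Salvetti}: the inclusion of arrangements $\mathcal{A}_3=\mathcal{C}_3^1\subset\mathcal{C}_3^2$ induces a cellular map $\Sal(\mathcal{C}_3^2)\to\Sal(\mathcal{A}_3)$ whose homotopy class makes the square with the inclusion of complements commute, and passing to $\Sigma_4$-coinvariants with sign coefficients identifies the induced map on homology with $(i_4^2)_*$. Composing the three identifications yields the stated isomorphism.

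The genuinely substantive input, namely the computation $H_*(K_*^{4,2})\cong 0$ (resting on the boundary formulas of \S\ref{cells} and Lemmas \ref{first_boundary}--\ref{boundary_of_3-cells}), is assumed here, so the Corollary itself is formal. The only point demanding care is the bookkeeping of the last two paragraphs: one must confirm that the deformation retraction and the comparison square can be arranged $\Sigma_4$-equivariantly, and that the orientation and sign conventions fixed in \S\ref{cells} are precisely those making $i_4^2$ the coinvariant of the topologically induced chain map, rather than differing from it by a sign on some $\Sigma_4$-orbit. This is routine but is where an error would most plausibly hide.
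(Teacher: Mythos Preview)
Your proposal is correct and follows exactly the approach implicit in the paper: the Corollary is presented there without a separate proof, as an immediate consequence of the vanishing of $H_*(K_*^{4,2})$ together with the short exact sequence $0\to K_*^{4,2}\to C_*^{4,2}\to C_*^{4,1}\to 0$ from Definition~\ref{abbreviated_chain_complexes}. You have simply made explicit the long exact sequence argument and the identification of the cellular coinvariant homology with the singular twisted homology, which the paper leaves tacit.
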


When $p=2$, Lemma \ref{boundaries_on_K42} implies that the boundaries in
$K_*^{4,2}$ are given by.
 \begin{eqnarray*}
  \partial_1(x_{11}) & = & 0 \\
  \partial_1(x_{12}^+) & = & x_0 \\
  \partial_1(x_{12}^-) & = & x_0 \\
  \partial_2(x_{21}) & = & 0 \\
  \partial_2(x_{22}^+) & = & x_{11} \\
  \partial_2(x_{22}^-) & = & x_{11} \\
  \partial_3(x_3) & = & 0.
 \end{eqnarray*}
In particular, $x_3$ represents a nontrivial cycle in $C_3^{4,2}$ that
is mapped to $0$ under the map $i_4^2$. Thus we obtain a proof of
Theorem \ref{fails_at_2}.

\bibliographystyle{halpha}
\bibliography{%
\bibdir/mathAb,%
\bibdir/mathAl,%
\bibdir/mathAn,%
\bibdir/mathA,%
\bibdir/mathB,%
\bibdir/mathBa,%
\bibdir/mathBe,%
\bibdir/mathBo,%
\bibdir/mathBr,%
\bibdir/mathBu,%
\bibdir/mathCa,%
\bibdir/mathCh,%
\bibdir/mathCo,%
\bibdir/mathC,%
\bibdir/mathDa,%
\bibdir/mathDe,%
\bibdir/mathDu,%
\bibdir/mathD,%
\bibdir/mathE,%
\bibdir/mathFa,%
\bibdir/mathFo,%
\bibdir/mathFr,%
\bibdir/mathF,%
\bibdir/mathGa,%
\bibdir/mathGo,%
\bibdir/mathGr,%
\bibdir/mathG,%
\bibdir/mathHa,%
\bibdir/mathHe,%
\bibdir/mathH,%
\bibdir/mathI,%
\bibdir/mathJ,%
\bibdir/mathKa,%
\bibdir/mathKo,%
\bibdir/mathK,%
\bibdir/mathLa,%
\bibdir/mathLe,%
\bibdir/mathLu,%
\bibdir/mathL,%
\bibdir/mathMa,%
\bibdir/mathMc,%
\bibdir/mathMi,%
\bibdir/mathM,%
\bibdir/mathN,%
\bibdir/mathO,%
\bibdir/mathP,%
\bibdir/mathQ,%
\bibdir/mathR,%
\bibdir/mathSa,%
\bibdir/mathSe,%
\bibdir/mathSt,%
\bibdir/mathS,%
\bibdir/mathT,%
\bibdir/mathU,%
\bibdir/mathV,%
\bibdir/mathW,%
\bibdir/mathX,%
\bibdir/mathY,%
\bibdir/mathZ,%
\bibdir/physics,%
\bibdir/personal}

\end{document}